\newtheorem{theorem}{Theorem}[section]
\newtheorem{lemma}[theorem]{Lemma}
\newtheorem{corollary}[theorem]{Corollary}
\newtheorem{definition}[theorem]{Definition}
\newtheorem{remark}[theorem]{Remark}
\newtheorem{proposition}[theorem]{Proposition}
\newcommand{\eqn}{\begin{eqnarray}}
\newcommand{\een}{\end{eqnarray}}
\newcommand{\pat}{\partial_t}
\definecolor{luh-dark-blue}{rgb}{0.0, 0.313, 0.608}
\newcommand{\bN}{\mathbb{N}}
\numberwithin{equation}{section}
\newcommand{\R}{\mathbb R}
\newcommand{\normaF}[2]{\|#1\|_{\raisebox{-4pt}{\scriptsize $#2$}}}
\newcommand{\normaL}[2]{\left\|#1\right\|_{\raisebox{-2pt}{\scriptsize $#2$}}}
\newcommand{\ignore}[1]{{}}
\setlist{leftmargin=5.5mm}
\newcommand\reallywidehat[1]{%
\savestack{\tmpbox}{\stretchto{%
  \scaleto{%
    \scalerel*[\widthof{\ensuremath{#1}}]{\kern-.6pt\bigwedge\kern-.6pt}%
    {\rule[-\textheight/2]{1ex}{\textheight}}
  }{\textheight}%
}{0.5ex}}%
\stackon[1pt]{#1}{\tmpbox}%
}
\def\ds{\displaystyle}
\def\eps{\varepsilon}
\def\q{\quad}
\def\qq{\qquad}
\def\intO{\int_\Omega}
\def\lm{\lambda}
\def\de{\delta}
\def\si{\sigma}
\def\ga{\gamma}
\def\pa{\partial}
\def\N{\nabla}
\def\D{\Delta}
\def\vp{\varphi}
\def\al{\alpha}
\def\s{\t{s}}
\newcommand{\pare}[1]{\left( #1 \right)}
\newcommand{\norm}[1]{\left\| #1 \right\|}
\newcommand{\av}[1]{\left| #1 \right|}
\newcommand{\bra}[1]{\left[ #1 \right]}
\newcommand{\set}[1]{\left\{ #1 \right\}}
\newcommand{\sgn}{\textnormal{sgn}}
\renewcommand{\t}[1]{\text{#1}}
\newcommand{\dive}[1]{\t{div}\left( #1 \right)}
\def\sideremark#1{\ifvmode\leavevmode\fi\vadjust{\vbox to0pt{\vss
\hbox to0pt{\hskip\hsize\hskip1em
\vbox{\hsize3cm\tiny\raggedright\pretolerance10000
\noindent #1\hfill}\hss}\vbox to8pt{\vfil}\vss}}}
\begin{document}

\setlength{\abovedisplayskip}{5pt}
\setlength{\belowdisplayskip}{5pt}

\setlength{\jot}{7pt}

\title[Parabolic equations having a superlinear gradient term]{Quasi-linear parabolic equations having a superlinear gradient term which depends on the solution}

\author[A.  Dall'Aglio]{Andrea Dall'Aglio 
}
\address{Dipartimento di Matematica ``G. Castelnuovo'',
``Sapienza'' Universit{\`a} di Roma, \hfill \break\indent
Piazzale Aldo Moro 2, I-00185 Roma, Italy.}
\email{dallaglio@mat.uniroma1.it\hfill\break\indent
}

\author[M. Magliocca]{Martina Magliocca 
}
\address{Departamento de An\'alisis Matem\'atico, Universidad de Sevilla, \hfill \break\indent
Tarfia s/n, 41012 Sevilla, Spain.}
\email{mmagliocca@us.es\hfill\break\indent
}

\author[S. Segura de Le\'on]{Sergio Segura de Le\'on 
}
\address{Departament d'An\`alisi Matem\`atica, Universitat de Val\`encia, \hfill \break\indent
Dr. Moliner 50, 46100 Burjassot, Val\`encia, Spain.}
\email{sergio.segura@uv.es\hfill\break\indent
}

\date{\today}

\subjclass[2020]{35K55, 35K61, 35B65, 35R05}

\keywords{Superlinear parabolic problem, Cauchy-Dirichlet problem, evolution equations, gradient term,  existence and regularity of solutions}

\begin{abstract}
In this paper, we study existence and regularity for solutions to parabolic equations having a superlinear lower order term depending on both the solution and its gradient.  Two different situations are analyzed. On the one hand, we assume that the initial datum belongs to an Orlicz space of  exponentially summable functions. On the other, data in an appropriate Lebesgue space satisfying a smallness condition are considered. Our results are coherent with those of previous papers in similar frameworks.
\end{abstract}

\maketitle

\setcounter{tocdepth}{3}


\section{Introduction}

This paper is devoted to study existence of solutions to parabolic equations having a superlinear lower order term depending on both the solution and its gradient. The prototype is the following problem:
\begin{equation} \label{eq:toy-pb}
\begin{cases}
\begin{aligned}
\pat u-\D u&=\av{u}\av{\N u}^q+f&&\t{in } Q_T=(0,T)\times\Omega,\\
u(0,x)&=u^0(x)&&\t{in }\Omega,\\
u(t,x)&=0&&\t{on }(0,T)\times\partial\Omega,
\end{aligned}
\end{cases}
\end{equation}
with $0<q<2$. Here $\Omega$ is a bounded open set in $\R^N$ ($N\ge 3$) and $T>0$.
By ``superlinear'' we mean that the term in the right hand side of  \eqref{eq:toy-pb}, which depends on $u$ and $\nabla  u$, is homogeneous with total degree greater than one.
Regarding the data $u^0$ and $f$, they belong to suitable functional spaces which will be detailed below.

The assumptions  on the power $q$ imply that the term $|u|\av{\N u}^q$ always grows with superlinear rate, so an essential aspect of any existence result is to determine suitable classes of  functions for the  data.\\
This is a common feature when dealing with superlinear problems, and it is not  related to the fact of having a parabolic problem nor a gradient term. Indeed, even in the elliptic framework (see for instance \cite{GMP, AFM}) or in the parabolic case with superlinear powers of the solution (see for instance \cite{BC,Ppower}) the existence of solutions  is strictly related to the regularity of the data. \\
It is worth noticing that the regularity of initial data affects both the existence and the notion of solutions. As expected, the solution we obtain enjoys the same regularity at each instant $t>0$ as the initial datum.

Superlinear problems, analogous to \eqref{eq:toy-pb}, have already been studied. The case where  the gradient term exhibits a natural (i.e., quadratic) growth, namely
\begin{equation}\label{eq:quad}
\pat u-\D u=g(u)\av{\N u}^2+f,
\end{equation}
has been considered in \cite{DGSucraina, DGS} assuming different growths on $g(u)$. This equation is always superlinear if $g$ is positive and increasing. To handle this limit case we have available the Cole-Hopf change of unknown that allows to transform our equation into a semilinear one. In this case exponentially summable initial data have to be considered.

In the case where  $|u| \av{\N u}^q$ is replaced by $\av{\N u}^q$, we get the superlinear equation
\begin{equation}\label{eq:tesi}
\pat u-\D u=\av{\N u}^q+f, \qquad 1<q<2,
\end{equation}
for which we refer to \cite{M,BASW,BD}.
Another superlinear problem sharing some similarities with \eqref{eq:toy-pb} and \eqref{eq:tesi} is the  Cauchy-Dirichlet problem for the equation
\begin{equation}\label{eq:cesco}
\pat u-\D u=\frac{\av{\N u}^q}{\av{u}^\al}+f,\qq \al>0,\q 1+\al<q<2.
\end{equation}
In this case, the existence of solutions  is developed in \cite{MO}. The exponent $\al$ changes the superlinear range  to $1+\al<q<2$, and this means that the problem is superlinear for less values of $q$ that for $\al=0$. Both equations can be dealt with initial data belonging to certain Lebesgue spaces.

\vspace{-1mm}

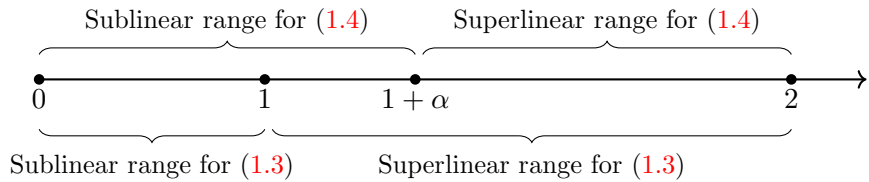
\begin{figure}[H]
\centering
\begin{tikzpicture}
\draw [->,thick] (0,0) -- (11,0);
\fill (0,0) circle (2pt) node[below]
{$0$};
\fill (3,0) circle (2pt) node[below] {$1$};

\fill (5,0) circle (2pt) node[below] {$1+\al$};

\draw[decorate,decoration={brace,amplitude=5pt,mirror,raise=4ex}]  (3.1,0) -- (10,0) node[midway,yshift=-3em]{\small Superlinear range for \eqref{eq:tesi}};
\draw[decorate,decoration={brace,amplitude=5pt,mirror,raise=4ex}]  (0,0) -- (3,0) node[midway,yshift=-3em]{\small Sublinear range for \eqref{eq:tesi}};

\draw[decorate,decoration={brace,amplitude=5pt,raise=2ex}]  (5.1,0) -- (10,0) node[midway,yshift=2em]{\small Superlinear range for \eqref{eq:cesco}};

\draw[decorate,decoration={brace,amplitude=5pt,raise=2ex}]  (0,0) -- (5,0) node[midway,yshift=2em]{\small Sublinear range for  \eqref{eq:cesco}};

\fill (10,0) circle (2pt) node[below] {$2$};

\end{tikzpicture}
\caption{\small Comparison between
the superlinear ranges  of $q$ in \eqref{eq:tesi}, \eqref{eq:cesco} (see \cite{M}, \cite{MO}, respectively).}
\end{figure}

Intuitively,  as the exponents of $\av{u}$ and $|\nabla u|$ increase in the right hand side of the equation,  one need to make stronger requirements on the data $u^0$ and $f$, that is, they have to be more integrable. Consequently, the assumptions on the data for problem \eqref{eq:toy-pb} should be stronger than those for problems \eqref{eq:tesi}, \eqref{eq:cesco}  (see \cite{M,MO}) and weaker than the assumptions for problem \eqref{eq:quad} (see  \cite{DGSucraina, DGS}). We will show that this idea is correct, in two different ways: first we prove an existence result with an exponentially integrable initial datum instead of a datum in a Lebesgue space. Then we prove a more unexpected and novel  existence result taking data in Lebesgue spaces, as studied in \cite{M}, but   assuming a smallness condition. Although both results are obtained by an approximation procedure, we get the respective estimates  with different methods. In fact, the case of exponentially integrable initial data is handled by using a logarithmic Sobolev inequality and a Gronwall-type lemma,  while the Gagliardo-Nirenberg inequality is essential to deal with less regular initial data.
 To study problem \eqref{eq:toy-pb} for very regular initial data, assumptions are similar to those needed to have existence of solutions in the case with natural growth $q=2$, although now the Cole-Hopf change of unknown cannot be applied. In this case, we may consider initial data belonging to a whole functional space (i.e., without assuming smallness). Nevertheless, it has the weakness of considering exponentionally summable initial data and not taking fully advantage of the fact that $q<2$ in \eqref{eq:toy-pb}.

To better understand our hypotheses on the data in the setting of Lebesgue spaces,
several remarks comparing with the results of \cite{M,MO} are in order. In \cite{M}, the regularity of the data which is required to find
solutions of problem \eqref{eq:tesi} is
\begin{equation}\label{ipotesiM}
u^0\in L^{\frac{N(q-1)}{2-q}}(\Omega), \qq f\in L^r(0,T;L^m(\Omega))\q\t{with}\q \frac{N}{m}+\frac{2}{r}\le \frac{q}{q-1},
\end{equation}
while, in \cite{MO},  the compatibility condition for problem \eqref{eq:cesco}  is given by
\begin{equation}\label{ipotesiMO}
u^0\in L^{\frac{N(q-1-\al)}{2-q}}(\Omega), \qq f\in L^r(0,T;L^m(\Omega))\q\t{with}\q \frac{N}{m}+\frac{2}{r}\le  \frac{q-2\al}{q-1-\al}.
\end{equation}
Since, for $\al>0$,
\[
\frac{N(q-1-\al)}{2-q}<\frac{N(q-1)}{2-q}
\qquad\mbox{and}\qquad
\frac{q-2\al}{q-1-\al}>
\frac{q}{q-1}\,,
\]
it follows that the assumptions \eqref{ipotesiMO} are weaker than \eqref{ipotesiM}, that is, one can prove the existence of the same kind of solution but with less regular data. In other words, 
the term $|u|^{-\alpha}$ in \eqref{eq:cesco}  
has a regularizing effect.\\
Coming back to our main problem, 
the  compatibility condition for \eqref{eq:toy-pb} becomes (see \eqref{eq:f-0})
\[
u^0\in L^{\frac{Nq}{2-q}}(\Omega),\qq f\in L^r(0,T;L^m(\Omega))\q\t{with}\q\frac{N}{m}+\frac{2}{r}\le  \frac{q+2}{q}.
\]
Comparing with the assumptions  \eqref{ipotesiM}  and \eqref{ipotesiMO} on $u^0$, 
we clearly have the chain of inequalities
\[
\overbrace{\frac{Nq}{2-q}}^{\t{our case}}>\overbrace{\frac{N(q-1)}{2-q}}^{\t{case with $\av{\N u}^q$ \eqref{eq:tesi}}}>\overbrace{\frac{N(q-1-\al)}{2-q}}^{\t{case with $\av{\N u}^q{\av{u}^{-\al}}$ \eqref{eq:cesco}}}.
\]
As far as the source datum $f$ is concerned, the comparison is illustrated in Figure 2.
This means that, in \eqref{eq:toy-pb}, we can prove the existence of the same kind of solution of either \eqref{eq:tesi} or \eqref{eq:cesco} asking for more regular data than those in \eqref{eq:tesi} or in \eqref{eq:cesco}.

\begin{figure}[H]
 
\begin{tabular}{cc}
\begin{minipage}{.65\textwidth}
\begin{figure}[H]
\centering
\scalebox{.8}{

\begin{tikzpicture}[xscale=2,yscale=.68]

\coordinate (A) at (0,-1);
\coordinate (L) at (2,-1);
\coordinate (B) at (2,0);
\coordinate (C) at (1,5);
\coordinate (D) at (0,5);

\coordinate (F) at (2.8,1);
\coordinate (G) at (2,5);
\coordinate (M) at (2.8,-1);

\coordinate (H) at (3.6,2);
\coordinate (I) at (3,5);
\coordinate (N) at (3.6,-1);

\filldraw[fill=gray!30] (A) -- (N) -- (H) -- (I) -- (D) -- cycle;

\filldraw[ fill=gray!50] (A) -- (M) -- (F) -- (G) -- (D) -- cycle;
\filldraw[fill=gray!70] (A) -- (L) -- (B) -- (C) -- (D)-- cycle;

\draw[ultra thick, color=gray!90]   (C) -- (B) -- (L);
\draw[ultra thick, color=gray!70] (G) --  (F) -- (M);
\draw[ultra thick, color=gray!50] (I) -- (H) -- (N);

\draw[->] (0,-1) -- (5,-1)  node[below right] {$\frac{1}{m}$};
\draw[->] (0,-1) -- (0,7) ;
\draw   (0,6) node[left] {$\frac{1}{r}$};

\draw (0,5) -- (5,5)   node[left] at (0,5) {$1$};
\fill (3,5) circle (0pt) node[above] {$\small \frac{2-q}{N(q-1-\al)}$};
\fill (0,2) circle (0pt) node[left] {$\small \frac{2-q}{N(q-1-\al)}$};
\fill (2,5) circle (0pt) node[above] {$\small \frac{2-q}{N(q-1)}$};
\fill (0,1) circle (0pt) node[left] {$\small \frac{2-q}{N(q-1)}$};
\fill (1,5) circle (0pt) node[above] {$\small \frac{2-q}{Nq}$};
\fill (0,0) circle (0pt) node[left] {$\small \frac{2-q}{Nq}$};

\fill (2,-1) circle (0pt) node[below ] {$\small \frac{N(2+q)-2(2-q)}{N^2q}$};

\fill (2.8,-2) circle (0pt) node[below ] {$\small \frac{Nq-2(2-q)}{N^2(q-1)}$};

\fill (3.6,-1) circle (0pt) node[below ] {$\small \frac{N(q-2\al)-2(2-q)}{N^2(q-1-\al)}$};

\draw [dashed] (0,0) -- (2,0);

\draw [dashed] (0,1) -- (2.8,1);

\draw [dashed] (0,2) -- (3.6,2);

\draw [dashed,->] (2.8,-2) -- (2.8,-1);
 
\end{tikzpicture}
}

\end{figure}
\end{minipage}
&
\begin{minipage}{.5\textwidth}
\vspace{5mm}
\small $f\in L^r(0,T;L^m(\Omega))$ with \\[10pt]
\begin{tikzpicture}
\draw[  fill=gray!70!,draw=gray!90,thick] (0,0) rectangle (.3,.3);
\end{tikzpicture}
\small   $\frac{N}{m}+\frac{2}{r}\le 1+\frac{2}{q}$ (for \eqref{eq:toy-pb})\\

\begin{tikzpicture}
\draw[  fill=gray!50,draw=gray!70,thick] (0,0) rectangle (.3,.3);
\end{tikzpicture}
\small  $\frac{N}{m}+\frac{2}{r}\le \frac{q}{q-1}$ (for \eqref{eq:tesi})\\

\begin{tikzpicture}
\draw[  fill=gray!30, draw=gray!50,thick] (0,0) rectangle (.3,.3);
\end{tikzpicture}
\small  $\frac{N}{m}+\frac{2}{r}\le   \frac{q-2\al}{q-1-\al}$ (for \eqref{eq:cesco})\\

\end{minipage}

\end{tabular}

\caption{\small  Comparison between the  regularity of the source term $f$ in \eqref{eq:tesi}, \eqref{eq:cesco} (see \cite{M, MO}) and \eqref{eq:toy-pb} (see the precise assumptions in \eqref{eq:f-0b} below). Note that the three regions coincide for $q=2$.}
\end{figure}

Furthermore, the fact of having  $\av{u}$ multiplying the gradient power will weaken our existence results. Let us explain better this claim. When considering \eqref{eq:toy-pb},  we need to ask for small norm of the data, while in both \cite{M,MO},  no condition on the size of norms of $u^0$ or  $f$ was
necessary. Indeed, in those works,  the estimates are done on the level set function $G_k(u)=(\av{u}-k)_+\t{sign}(u)$ (see \eqref{tk1} and Fig.\ \ref{fig:GkTk}  below).
Roughly speaking, the techniques used in \cite{M,MO} can estimate
the integral in space and time containing the $q$-power of the gradient in terms of $\norm{G_k(u^0)}_{L^\si(\Omega)}$  times a quantity appearing in the left hand side so that, letting $k$ large enough, they can be absorbed.
 We will show that this trick is not possible in our case \eqref{eq:toy-pb} (see  Remark \ref{rmk:Gk} below).

Hence, in the study of the existence of finite energy and entropy solutions in the case with possibly non-regular Lebesgue data we assume they have small norms.
The core of our proof relies in the a priori estimates contained in Subsection \ref{sec:data-small}. To get these estimates, we take $u^0\in L^\sigma(\Omega)$ for certain $\sigma$ depending on $q$. Roughly speaking, $\displaystyle \sigma=\max\Big\{1,\, \frac{Nq}{2-q}\Big\}$. We must analyze several cases according to the exponent $q$. Indeed, observe that
\begin{align*}
&\si\ge2 &&\t{when}\qq\frac{4}{N+2}\le q<2,\\
&1<\si<2 &&\t{when}\qq\frac{2}{N+1}< q<\frac{4}{N+2},\\
&\si=1  &&\t{when}\qq0< q<\frac{2}{N+1}.
\end{align*}
We remark that the range $0< q<\frac{2}{N+1}$ leads to $\frac{Nq}{2-q}<1$, hence every $u^0\in L^1(\Omega)$ verifies the compatibility condition. On the other hand, if $\si\ge2$, we prove the existence of finite energy solutions, while we find entropy solutions as $\si<2$. \\
A particularly important case is the threshold $q=\frac{2}{N+1}$. This value of $q$ leads to $\frac{Nq}{2-q}=1$. However, as in the elliptic case (see \cite[Example 4.1]{GMP}), assuming $L^1$ data is not enough in order to obtain an existence result. A possibility could be taking $u^0\in L^{1+\eps}(\Omega)$, $\eps>0$, and then reasoning in a similar way as in the case of sharp data with $\si>1$ (see \cite[Theorem 5.6]{M}, \cite[Case with $(Q_2)$]{MO}).
Having in mind that $L^1$ data cannot be considered, in this work, we refine the data assumptions of the critical case  $q=\frac{2}{N+1}$ asking for more general data such that
\[
\int_\Omega |u^0|(\max\set{1,\log(\av{u^0})})^{\frac{N+1}2}dx \quad\mbox{and}\quad \iint_{Q_T} |f|\,(\max\set{1,\log(\av{f})})^{\frac{N+1}2}\,dx\,d\tau
\]
are (sufficiently) small. So, in this article, we improve  the results obtained in \cite{M, MO} for critical $q$.

As the exponent $q$ decreases from $2$ to $0$, the  gradient nonlinearity  gets weaker, which allows us to consider larger classes of data, i.e. data with less integrability. The price to pay to take such non-regular data is that solutions may have infinite energy.  However, even for small $q$, we can still consider more regular data such that we recover the finite energy formulation (see Proposition \ref{energy}).\\

This paper is organized as follows. Section 2 is devoted to preliminaries: notations, precise assumptions and statements of main results are included. Section \ref{sec:exp-bdd} contains the proof of existence of finite energy solutions when data are very regular. Section \ref{sec:small} is concerned with the proof of existence for data in Lebesgue spaces, satisfying a smallness condition. Section \ref{sec:energy} deals with the proof of existence of finite energy solutions for small values of $q$.

\section{Preliminaries}

\subsection{Notation}

We here introduce some functions which will be used in the sequel.\\
We define  the truncation function $T_k(s)$ and its primitive $\Theta_k(s)$, as well as the level set function $G_k(s)$  as
\begin{equation}\label{tk1}
 T_k(s) = \min\{|s|,k\}\,\sgn( s)\,, \quad  \Theta_k(s) = \int_0^s T_k(\eta)\,d\eta\,,\quad G_k(s)=s-T_k(s)\,,\quad
 s\in \R,\ k>0.
\end{equation}
\begin{figure}[H]
\begin{tabular}{cc}
\begin{minipage}{.45\textwidth}
\begin{figure}[H]
\centering
\begin{tikzpicture}[scale=.9]
\draw[->] (-3,0) -- (3,0) node[anchor=north west] {$s$};
\draw[->] (0,-2) -- (0,2) node[left] {{\color{gray}$T_k(s)$}};
\draw[very thick, gray] (-1,-1) -- (1,1);
\draw[very thick, gray] (1,1) -- (2,1);
\draw[very thick, gray] (-1,-1) -- (-2,-1);
\draw[very thick, dashed, gray] (2,1) -- (3,1);
\draw[very thick, dashed, gray] (-2,-1) -- (-3,-1);
\draw[dashed] (1,1) -- (1,0);
\draw[dashed] (-1,-1) -- (-1,0);
\draw[dashed] (1,1) -- (0,1);
\draw[dashed] (-1,-1) -- (0,-1);
\fill (1,0) circle (1.5pt) node[below] {$k$};

\fill (-1,0) circle (1.5pt) node[above] {$-k$};

\fill (0,1) circle (1.5pt) node[left] {$k$};
\fill (0,-1) circle (1.5pt) node[right] {$-k$};
\end{tikzpicture}
\end{figure}
\end{minipage}
&
\begin{minipage}{.45\textwidth}
\begin{figure}[H]
\centering
\begin{tikzpicture}[scale=.9]
\draw[->] (-3,0) -- (3,0) node[anchor=north west] {$s$};
\draw[->] (0,-2) -- (0,2) node[left] {{\color{gray!70!black}$G_k(s)$}};
\draw[very thick, gray!70!black] (-1,0) -- (1,0);
\draw[very thick, gray!70!black] (1,0) -- (2,1);
\draw[very thick, gray!70!black] (-1,0) -- (-2,-1);
\draw[very thick, dashed, gray!70!black] (2,1) -- (3,2);
\draw[very thick, dashed, gray!70!black] (-2,-1) -- (-3,-2);
\fill (1,0) circle (1.5pt) node[below] {$k$};

\fill (-1,0) circle (1.5pt) node[above] {$-k$};

\end{tikzpicture}
\end{figure}
\end{minipage}
\end{tabular}
\caption{The functions    $T_k(s)$ and $G_k(s)$.}\label{fig:GkTk}
\end{figure}
\noindent
Furthermore, we set
\[
\log^*s= \max\{1, \log s\}\q\t{for}\q s>0,\qq \log^*0=1.
\]

We   denote by $\langle\cdot,\, \cdot \rangle$   the pairing between $H^{-1}(\Omega) + L^1(\Omega)$ and $H^1_0(\Omega)\cap L^\infty(\Omega)$.

We   name   either $c$ or $C$ constants (depending on the parameters of the problem but not on the approximation and regularization  parameters $n$ or $\eps$) that may vary from line to line.

We represent the $L^p(\Omega)$ norms of some function $f$ by either $\norm{f}_p$ or $\norm{f}_{L^p(\Omega)}$.

\subsection{General problem and assumptions}

The general problem we are going to consider is
\begin{equation}\label{eq:pb}
\begin{cases}
\begin{aligned}
\pat u-\dive{a(t,x,u,\N u)}&=H(t,x,u,\N u)&&\t{in } Q_T ,\\
u(0,x)&=u^0(x)&&\t{in }\Omega,\\
u(t,x)&=0&&\t{on }(0,T)\times\partial\Omega,
\end{aligned}
\end{cases}
\end{equation}
where
\[
a(t,x,s,\xi):Q_T\times \mathbb{R}\times \mathbb{R}^N\to  \mathbb{R}^N
,\qq
H(t,x,s,\xi):Q_T\times \mathbb{R}\times \mathbb{R}^N\to \mathbb{R}
\]
 are Caratheodory functions (i.e., measurable with respect to $(t,x)$ for every $(s,\xi)\in \R\times \R^N$, and continuous in $(s,\xi)$ for a.e.\ $(t,x)\in Q_T$).\\
Furthermore, we assume that     $a(t,x,s,\xi)$ and $H(t,x,s,\xi)$ are such that
\begin{itemize}
\item  the classical Leray-Lions structure conditions hold:
\begin{subequations}\label{eq:A}
\begin{equation}\label{A1}
\exists \,\alpha>0:\quad
\alpha|\xi|^2\le a(t,x,s,\xi)\cdot\xi,
\end{equation}
\begin{equation}\label{A2}
\exists \,\beta>0:\quad|a(t,x,s,\xi)|\le \beta[|s| +|\xi| +h(t,x)]\quad\text{where }\,\, h\in L^{2}(Q_T),
\end{equation}
\begin{equation}\label{A3}
 (a(t,x,s,\xi)-a(t,x,s,\eta))\cdot(\xi-\eta)> 0,
\end{equation}
\end{subequations}
for almost every $(t,x)\in Q_T$, for every $s\in \mathbb{R}$ and for every $\xi$, $\eta$ in $\mathbb{R}^N$, $\xi\ne\eta$;
\item the right hand side satisfies the growth condition:
\begin{equation}\label{H}
\ds
\begin{array}{c}
\exists \,\,\gamma>0:\,\,
|H(t,x,s,\xi)|\le \gamma \av{s}|\xi|^q+f(t,x)\qq 0<q<2,
\end{array}
\end{equation}
for almost every $(t,x)\in Q_T$, for every $(s,\xi)\in\R\times\mathbb{R}^N$ and for some measurable function $f(t,x)$ on $Q_T$, for which some integrability assumptions will be given hereafter. 
\end{itemize}
For the sake of simplicity we will assume that
\begin{equation}\label{alfagamma1}
\al=\ga=1.
\end{equation}
The general case can be adapted with few modifications, although the smallness condition may change accordingly.\\
We delay data assumptions to the statements of the main theorems.

\subsection{Definitions of solution}

 We will now give two definitions of solutions  to problem \eqref{eq:pb}:

\begin{definition}[Finite energy solutions]\label{defsol}
We say that a real valued  function $u$ is a finite energy solution to problem \eqref{eq:pb} if
\[
u\in L^2(0,T;H_0^1(\Omega))\cap C([0,T]; L^2(\Omega))\,,\quad a(t,x,u,\N u)\in L^2(Q_T)\,,\quad H(t,x,u,\N u) \in L^1(Q_T),
\]
 and it verifies the weak formulation
\begin{align}
&\intO u(t,x)v(t,x)\,dx-\intO u^0(x)v(0,x)\,dx- \int_0^t \langle\pa_\tau v,u \rangle\,d\tau+\iint_{Q_t} a(\tau,x,u,\N u)\cdot\N v\,dx\,d\tau \nonumber \\
&=\iint_{Q_t}H(\tau,x,u,\N u) v\,dx\,d\tau \label{formdeb}
\end{align}
for every $t\in (0,T]$ and for every test function $v(\tau,x)$ such that
\[
v\in L^2(0,t;H_0^1(\Omega))\cap L^\infty(Q_t)\cap C([0,t];L^2(\Omega)),\q \pa_\tau v\in L^2(0,t;H^{-1}(\Omega)).
\]
\end{definition}

\smallskip

\begin{definition}[Infinite energy solutions - Entropy solutions] \label{entrsol}
We say that a real valued function $u$ is an entropy solution to problem \eqref{eq:pb} if
\[
u\in C([0,T];L^1(\Omega)), \qquad
T_k(u)\in L^2(0,T;H_0^1(\Omega))\quad\forall k>0,\qquad H(t,x,u,\nabla u)\in L^1(Q_T),
\]
and
\begin{align*}
&\intO \Theta_k(u(t,x)-v(t,x))\,dx-\intO \Theta_k(u^0(x)-v(0,x))\,dx+\int_{0}^t\langle\pa_\tau v, T_k(u-v)\rangle\,d\tau\\
&\q+\iint_{Q_t}a(\tau,x,u,\N u)\cdot\N T_k(u-v)\,dx\,d\tau=\iint_{Q_t}H(\tau,x,u,\N u) T_k(u-v)\,dx\,d\tau
\end{align*}
for every $t\in(0,T]$, for every $k>0$ and for every test function $v(\tau,x)$ such that
\[
v\in L^2(0,t;H_0^1(\Omega))\cap L^\infty(Q_t)\cap C([0,t];L^1(\Omega)),\q \pa_\tau v\in L^2(0,t;H^{-1}(\Omega))+L^1(Q_t).
\]
\end{definition}

Note that, under the assumptions made, all the integral  terms in the previous definitions make  
sense.

The concept of entropy solutions was introduced by Benilan et al. in \cite{BBV}  to deal with elliptic equations with $L^1$ data. The adaptation for parabolic problems was done by Prignet and Andreu et al.  in \cite{Pr, AMST} respectively. An equivalent formulation is the one of the renormalized solutions, whose definition can be found in  \cite{BM97} by Blanchard and Murat.

\subsection{Statement of the results}
We are going to consider two main frameworks. \\
First, we prove the following existence result which is global in time and with exponentially integrable initial datum.
\begin{theorem}[Global existence for exponentially integrable
solutions]\label{teo:global-bdd}
Assume that \eqref{eq:A} and \eqref{H} are in force. We also assume that the data $u^0(x)$ and $f(t,x)$ satisfy
\begin{gather}
\mbox{there exists $\delta>0$ such that}\quad \int_\Omega e^{\delta (u^0)^2}\,dx < \infty\,;\label{ipot-exp-u0}
\end{gather}
\begin{equation}\label{ipot-exp-f}
\begin{array}{c}
f\in L^r (\log L)^{r/2}\big(0,T;L^m(\Omega)\big),\\[.3em]
\mbox{for some $r,m$ such that\,\,
$\ds \frac{N}{m}+\frac{2}{r}=2$,\,\, $1<r\le 2,\,\, N\le m<\infty$.}
\end{array}
\end{equation}

Then there exists a finite energy solution $u$ of problem \eqref{eq:pb} such that
\begin{equation}\label{regolarexp}
e^{\frac\delta2 u^2}-1 \in L^\infty(0,T;L^2(\Omega)) \cap L^2(0,T;H_0^1(\Omega))\,.
\end{equation}
Moreover, $H(t,x,u,\nabla u)u\in L^1(Q_T)$, and the following energy identity holds for every $t\in (0,T]$:
\begin{equation}\label{energy}
\frac12\int_\Omega u(t,x)^2 dx-\frac12\int_\Omega u^0(x)^2 dx+\iint_{Q_t}a(\tau,x,u,\nabla u)\cdot \nabla u\, dx\, d\tau=\iint_{Q_t}H(\tau,x,u,\nabla u) u\, dx\, d\tau\,.
\end{equation}

\end{theorem}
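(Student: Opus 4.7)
My plan is an approximation argument. I regularize by setting $u_n^0 := T_n(u^0)$, $f_n := T_n(f)$, and replacing $H$ by
\[
H_n(t,x,s,\xi) := \frac{H(t,x,T_n(s),\xi)}{1+\tfrac{1}{n}|H(t,x,T_n(s),\xi)|}\,,
\]
a bounded Carath\'eodory function still obeying $|H_n|\le |s||\xi|^q+|f_n|$. Standard theory for parabolic operators of Leray--Lions type with bounded data (Galerkin approximation) yields a weak solution $u_n\in L^2(0,T;H_0^1(\Omega))\cap L^\infty(Q_T)$ with $\pat u_n\in L^2(0,T;H^{-1}(\Omega))$.

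The heart of the proof is the uniform bound that gives \eqref{regolarexp}. I test (formally---rigorous via Steklov averages or time regularization of $\pat u_n$) with $v_n = u_n e^{\delta u_n^2}$. Writing $\Psi(s)=\tfrac{1}{2\delta}(e^{\delta s^2}-1)$, so that $\Psi'(s)=s\,e^{\delta s^2}$, and $w_n:=e^{\delta u_n^2/2}-1$, so that $|\N w_n|^2=\delta^2 u_n^2\,|\N u_n|^2 e^{\delta u_n^2}$, the parabolic term yields $\tfrac{d}{dt}\int_\Omega \Psi(u_n)\,dx$, while the coercivity \eqref{A1} bounds the diffusion contribution below by $\alpha\int_\Omega(1+2\delta u_n^2)|\N u_n|^2 e^{\delta u_n^2}\,dx$. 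Applying Young's inequality to $|u_n|^2|\N u_n|^q e^{\delta u_n^2}$ with conjugate exponents $2/q$ and $2/(2-q)$ absorbs its gradient part into the dissipation, leaving
\begin{equation*}
\frac{d}{dt}\int_\Omega\Psi(u_n)\,dx+c_0\int_\Omega(1+2\delta u_n^2)|\N u_n|^2 e^{\delta u_n^2}\,dx\le C\int_\Omega u_n^2 e^{\delta u_n^2}\,dx + \int_\Omega |f_n|\,|u_n|\,e^{\delta u_n^2}\,dx + C\,.
\end{equation*}
Since $u_n^2 e^{\delta u_n^2}=\tfrac{2}{\delta}(w_n+1)^2\log(w_n+1)$, the first right-hand-side term is controlled by a logarithmic Sobolev inequality of the form $\int g^2\log(e+g^2)\,dx\le C\bigl(\|\N g\|_2^2+\|g\|_2^2\bigr)$ applied to $g=w_n+1$, and is absorbed up to an $L^1$-in-$t$ multiple of $\|w_n\|_2^2$. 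The forcing term is handled through the Orlicz--H\"older duality between $L(\log L)^{r/2}$ and $\exp L^{2/r}$ combined with the same log-Sobolev embedding: this is precisely why \eqref{ipot-exp-f} imposes both the scaling $\tfrac{N}{m}+\tfrac{2}{r}=2$ and the $(\log L)^{r/2}$ enhancement on $f$. One thereby derives a Gronwall-type inequality $y_n'(t)+\kappa\|\N w_n(t)\|_2^2\le \phi(t)\bigl(1+y_n(t)\bigr)$ with $y_n(t)=\int_\Omega\Psi(u_n(t))\,dx$ and $\phi\in L^1(0,T)$, which yields uniform bounds on $\|w_n\|_{L^\infty(0,T;L^2)}$ and $\|\N w_n\|_{L^2(Q_T)}$.

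With these estimates, Aubin--Lions gives, along a subsequence, $u_n\to u$ a.e.\ and in $L^2(Q_T)$. A Boccardo--Murat argument---testing the difference equation with $T_k(u_n-T_h(u))$ and using the strict monotonicity \eqref{A3}---upgrades this to strong convergence $\N u_n\to \N u$ in $(L^2(Q_T))^N$. The exponential integrability implies equi-integrability of both $H_n(t,x,u_n,\N u_n)$ and $H_n(t,x,u_n,\N u_n)u_n$ (every polynomial power of $u_n$ is equi-integrable against $e^{\delta u_n^2}$, and the $f_n$ contribution is handled by the Orlicz assumption), so Vitali's theorem yields $L^1(Q_T)$-convergence of both sides and one passes to the limit in \eqref{formdeb} to produce a finite-energy solution $u$ satisfying \eqref{regolarexp}. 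Since $u\in L^2(0,T;H_0^1(\Omega))\cap C([0,T];L^2(\Omega))$ with $\pat u\in L^2(0,T;H^{-1}(\Omega))+L^1(Q_T)$ and $H(\cdot,u,\N u)u\in L^1(Q_T)$, the function $u$ itself is admissible in \eqref{formdeb} and produces \eqref{energy}. The principal obstacle is the forcing contribution $\int|f_n||u_n|e^{\delta u_n^2}$: every feature of \eqref{ipot-exp-f}---the balance $\tfrac{N}{m}+\tfrac{2}{r}=2$, the Orlicz refinement on $f$, and the range $1<r\le 2$, $N\le m<\infty$---is exactly what the log-Sobolev/Moser--Trudinger step needs in order to close the Gronwall loop.
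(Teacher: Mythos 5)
Your strategy is the paper's: test the approximate problem with $u_n e^{\delta u_n^2}$ (the paper's $\varphi(u_n)=2\delta u_n e^{\delta u_n^2}$), absorb the gradient factor of the superlinear term by Young's inequality with exponents $2/q$ and $2/(2-q)$, control the surviving exponential terms by a logarithmic Sobolev inequality, close with a Gronwall-type argument, and then run the Simon/Boccardo--Murat compactness machinery and Vitali's theorem. However, three steps do not close as written.

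First, the log-Sobolev inequality you invoke, $\int g^2\log(e+g^2)\le C\big(\|\nabla g\|_2^2+\|g\|_2^2\big)$ with a \emph{fixed} constant $C$, is insufficient: after Young's inequality the coefficient multiplying $\int u_n^2 e^{\delta u_n^2}$ is large (it blows up as the Young parameter shrinks), so absorption into the dissipation requires the version with a free small parameter in front of the gradient term, as in Proposition \ref{prop:adams}; the price is the extra term $\|g\|_2^2(\log^*\|g\|_2)^\kappa$, which forces the final differential inequality to be of Osgood type $y'\le\phi(t)\,y\log^* y+\theta(t)$ rather than the linear Gronwall inequality you claim (it still closes). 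Second, your treatment of the forcing term --- the step you yourself call the principal obstacle --- does not match the hypothesis: \eqref{ipot-exp-f} is an Orlicz condition \emph{in time} on $t\mapsto\|f(t)\|_m$ (the relevant quantity is $\int_0^T\|f(t)\|_m^r(\log^*\|f(t)\|_m)^{r/2}\,dt$), not a spatial $L(\log L)^{r/2}$ condition, so a spatial Orlicz--H\"older duality against $\exp L^{2/r}$ is not the mechanism. The paper instead applies H\"older in space with $\tfrac1m+\tfrac{2}{2^*r'}+\tfrac1r=1$, Young's and Sobolev's inequalities, and then the $\eps$-log-Sobolev with $\kappa=r/2$ and $\eps\sim\|f(t)\|_m^{-r}$; the resulting factor $(\log^*(1/\eps))^{r/2}\sim(\log^*\|f(t)\|_m)^{r/2}$ lands in the Gronwall coefficient, and this is exactly where the time-Orlicz hypothesis is consumed. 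Third, $u$ is not an admissible test function in \eqref{formdeb} (it is unbounded and $\pat u$ lies only in $L^2(0,T;H^{-1}(\Omega))+L^1(Q_T)$), so \eqref{energy} cannot be obtained by ``plugging in $u$''; the paper obtains it by taking $T_k(u_n)$ in the approximate problem and passing to the limit first in $n$ (using the strong convergence of $\nabla T_k(u_n)$) and then in $k$ (using $H(t,x,u,\nabla u)\,u\in L^1(Q_T)$).
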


We observe that \eqref{ipot-exp-u0} is the same as requiring that $u^0(x)$  belongs  to the Orlicz space associated to the $N$-function $\eta(s)=e^{s^2}-1$ (see \cite{RR,K}).

We note that assumptions on $f$  do not depend on $q$. Actually, this requirement corresponds to \eqref{eq:f-0b} below when $q=2$.

In the next result, we remove the strong integrability assumptions on the data, looking for solutions which belong to Lebesgue spaces, but in this case we impose  a smallness condition on the norm of the data, obtaining the following  existence result.

\begin{theorem}[Global existence with smallness condition on the norm of unbounded data]\label{teo:small}
Assume that \eqref{eq:A} and \eqref{H} are in force, and that    the data $u^0(x)$ and $f(t,x)$ satisfy the following hypotheses:
\begin{itemize}
\item   when $ \frac{2}{N+1}<q<2$,
\begin{subequations}\label{eq:f-0}
\begin{align}
u^0\in L^\si(\Omega)&\,\,\t{with}\,\,\si=\frac{Nq}{2-q},\label{eq:f-0a}\\
 f\in L^r(0,T;L^m(\Omega))&\,\,\t{with}\,\,\frac{N}{m}+\frac{2}{r}= 1+\frac{2}{q}\,,\q 1\le r\le \sigma'\,,\q \frac{N^2q}{N(q+2)-2(2-q)}\le m\le \sigma\,;\label{eq:f-0b}
\end{align}
\end{subequations}

\item  when $q=\frac{2}{N+1}$,
\begin{equation*}
\int_\Omega |u^0|(\log^*|u^0|)^{\frac{N+1}2}\,dx<\infty,\qq \iint_{Q_T} |f|\,(\log^*|f|)^{\frac{N+1}2}\,dx\,d\tau<\infty;
\end{equation*}
\item   when $0< q<\frac{2}{N+1}$,
\begin{equation*}
u^0\in L^1(\Omega),\qq f\in L^1(Q_T).
\end{equation*}
\end{itemize}
In each case, we assume that their norms are sufficiently small (see \eqref{smallness}, \eqref{smallness0}, \eqref{smallness-critico}, \eqref{eq:smallness-1} below).
Then there exists a  solution $u$ of problem \eqref{eq:pb} such that
\begin{itemize}
\item $u$ is a finite energy solution  when $ \frac{4}{N+2}\le q<2$, and \eqref{energy} holds;
\item   $u$ is an entropy solution when $0< q <\frac{4}{N+2}$.
\end{itemize}
Moreover, in the case $ \frac{2}{N+1}<q<2$, the solution becomes more regular satisfying $u\in C([0,T];L^\sigma(\Omega))$.
\end{theorem}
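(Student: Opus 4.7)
The plan is to argue by approximation. For each $n\in\mathbb{N}$, I would consider the problem obtained by truncating the data ($u^0_n=T_n(u^0)$, $f_n=T_n(f)$) and the nonlinearity ($H_n(t,x,s,\xi)=H(t,x,s,\xi)/(1+\tfrac1n|H(t,x,s,\xi)|)$, which is bounded and still satisfies the structure \eqref{H} with the same constants). Standard Leray--Lions theory yields weak solutions $u_n\in L^2(0,T;H_0^1(\Omega))\cap C([0,T];L^2(\Omega))$ of the approximate problems. The goal is then to derive a priori bounds independent of $n$, exploit them to pass to the limit via Aubin--Lions compactness and the Boccardo--Murat a.e. convergence of gradients, and recover one of the two solution notions depending on the value of $\sigma$.

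The core is the a priori estimate in the range $\frac{2}{N+1}<q<2$. Set $\sigma=\tfrac{Nq}{2-q}$ and $w_n=|u_n|^{\sigma/2}\sgn(u_n)$. Testing with $|u_n|^{\sigma-2}u_n$ and using \eqref{A1} produces
\[
\frac{1}{\sigma}\frac{d}{dt}\|u_n(t)\|_\sigma^\sigma+\frac{4\alpha(\sigma-1)}{\sigma^2}\|\nabla w_n\|_2^2 \le \iint_{Q_t}|u_n|^{\sigma}|\nabla u_n|^q\,dx\,d\tau+\iint_{Q_t}|f_n|\,|u_n|^{\sigma-1}\,dx\,d\tau.
\]
The heart of the matter is that, via H\"older with exponents $2/q$ and $2/(2-q)$, one obtains $\int|u_n|^\sigma|\nabla u_n|^q\le C\|\nabla w_n\|_2^q\|u_n\|_{r}^{q(N+2)/2}$ where $r=\tfrac{q(N+2)}{2-q}$, and then Gagliardo--Nirenberg applied to $w_n\in H_0^1$ (with interpolation exponent $\theta=N/(N+2)$) yields precisely
\[
\int|u_n|^\sigma|\nabla u_n|^q \le C\,\|\nabla w_n\|_2^{2}\,\|u_n\|_\sigma^{q}.
\]
The source integral is controlled by H\"older in both variables using \eqref{eq:f-0b}, and yields $\|f\|_{L^r(L^m)}\|u_n\|^{\sigma/r'}_{L^{r'\sigma/r}(L^{m'(\sigma-1)})}$, whose last factor is dominated by interpolation between $L^\infty(L^\sigma)$ and $L^2(L^{\sigma N/(N-2)})\supset$ the energy provided by $\nabla w_n$. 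Therefore the estimate reduces to a differential inequality of the shape
\[
\frac{d}{dt}\|u_n\|_\sigma^\sigma+C_1(1-C_2\|u_n\|_\sigma^q)\|\nabla w_n\|_2^2 \le C_3\,g(t)\bigl(1+\|u_n\|_\sigma^\sigma\bigr),
\]
with $g\in L^1(0,T)$ built from $\|f\|_{L^m}$. Under the smallness assumption, a continuity (bootstrap) argument starting from $\|u^0\|_\sigma\ll 1$ keeps $C_2\|u_n\|_\sigma^q<1/2$ for all $t\in[0,T]$, giving uniform bounds on $u_n$ in $L^\infty(0,T;L^\sigma(\Omega))$ and on $w_n$ in $L^2(0,T;H_0^1(\Omega))$. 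When $\sigma\ge 2$, i.e. $q\ge\tfrac{4}{N+2}$, testing additionally with $u_n$ itself (and using the same Gagliardo--Nirenberg trick on the nonlinearity) upgrades the bound to $u_n\in L^2(0,T;H_0^1(\Omega))$, leading to finite energy solutions; when $\sigma<2$ one instead tests with $T_k(u_n)$ to bound $T_k(u_n)$ in $L^2(0,T;H_0^1(\Omega))$ uniformly in $n$, preparing for the entropy formulation.

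For the critical exponent $q=\tfrac{2}{N+1}$, where $\sigma=1$, I would replace the power test function by one inspired by the Orlicz datum, namely a smooth primitive of $(\log^*|s|)^{(N+1)/2}\sgn(s)$, and redo the Gagliardo--Nirenberg interpolation to produce the logarithmic corrections appearing in the statement; the smallness of the data in $L\log^{(N+1)/2}L$ again enables absorption. The case $0<q<\tfrac{2}{N+1}$ is the easiest: $\tfrac{Nq}{2-q}<1$, so testing with $T_k(u_n)$ and summing on dyadic levels $\{k<|u_n|\le 2k\}$ (as in the Boccardo--Dall'Aglio--Gallou\"et--Orsina technique) produces the entropy-type bounds, with smallness used to absorb the nonlinearity term in the level-set estimates. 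In all three cases, the passage to the limit is standard once the bounds are in hand: Aubin--Lions gives $u_n\to u$ a.e. and in $L^p(Q_T)$ for suitable $p$, monotonicity \eqref{A3} together with truncation yields $\nabla u_n\to \nabla u$ a.e., and Vitali's theorem handles the convergence of $H_n(t,x,u_n,\nabla u_n)$ to $H(t,x,u,\nabla u)$ in $L^1(Q_T)$; the continuity $u\in C([0,T];L^\sigma(\Omega))$ in the regular range follows from the uniform estimate on $\tfrac{d}{dt}\|u_n\|_\sigma^\sigma$.

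The main obstacle I expect is the passage from the pointwise-in-time differential inequality to a closed a priori bound: one must simultaneously (i) absorb the superlinear gradient term, which, unlike in \cite{M,MO}, \emph{cannot} be localised on the level sets $\{|u|>k\}$ (as highlighted in Remark \ref{rmk:Gk}), and (ii) control the source contribution $\iint f|u_n|^{\sigma-1}$ with the sharp scaling dictated by \eqref{eq:f-0b}. Both obstructions are handled by Gagliardo--Nirenberg combined with the smallness assumption, but the interpolation exponents are tight and must match $\sigma=Nq/(2-q)$ exactly; a mismatch would leave a residual superlinear-in-time term that cannot be closed by Gronwall. The treatment of the critical $q=2/(N+1)$ is delicate for the same reason, and requires the slightly better-than-$L^1$ integrability hypothesis in the statement.
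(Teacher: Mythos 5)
Your architecture for the main range $\frac{4}{N+2}\le q<2$ --- approximation, testing with $|u_n|^{\sigma-2}u_n$, the H\"older/Sobolev/Gagliardo--Nirenberg interpolation giving $\iint|u_n|^{\sigma}|\nabla u_n|^q\le C\|\nabla w_n\|_2^{2}\|u_n\|_\sigma^{q}$, absorption under smallness via a continuity-in-time argument, a second test with $u_n$ for the energy bound, and Simon/Boccardo--Murat/Porretta compactness --- is exactly the paper's proof of Proposition \ref{prop:ape}, and your identification of the two tight interpolations as the crux is accurate. One point you gloss over: for $\frac{2}{N+1}<q<\frac{4}{N+2}$ one has $1<\sigma<2$, so $|u_n|^{\sigma-2}u_n=|u_n|^{\sigma-1}\sgn(u_n)$ has an unbounded derivative at the origin and is not an admissible test function; the paper must replace it by $\varphi(s)=[(\eps+|s|)^{\sigma-1}-\eps^{\sigma-1}]\sgn(s)$ and prove the comparison inequalities of Lemma \ref{lemmafunzioni} so that the resulting $\eps^{q}$ error in the absorption coefficient can still be made small (Proposition \ref{prop:ape2}). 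This is repairable but not cosmetic.

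The genuine gap is in the range $0<q<\frac{2}{N+1}$, where your dyadic level-set scheme is circular as sketched. Testing with $T_k(u_n)$ controls $\iint|\nabla T_k(u_n)|^2$ only after one bounds $\iint|\nabla u_n|^q|u_n|\,|T_k(u_n)|\le k\iint|\nabla u_n|^q|u_n|$, i.e.\ one needs an a priori $L^1(Q_T)$ bound on the superlinear term; but the Marcinkiewicz estimates $|\nabla u_n|\in M^{\frac{N+2}{N+1}}$, $u_n\in M^{\frac{N+2}{N}}$ that would yield $|\nabla u_n|^q|u_n|\in L^1$ for $q<\frac{2}{N+1}$ (Lemma \ref{lem:mar}) are themselves derived from those very $T_k$ bounds. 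The paper breaks the circle by first testing with $\varphi_\eps(s)=(\eps^{-\lambda}-(\eps+|s|)^{-\lambda})\sgn(s)$, $\lambda=\frac{2-q(N+1)}{N}$, which produces a self-improving weighted estimate $\iint|\nabla u_n|^2(\eps+|u_n|)^{-1-\lambda}\le C$ under smallness (Proposition \ref{prop:ape-1}); only then do the truncation estimates and Lemma \ref{lem:mar} enter, and Remark \ref{rmk:Gk} explains why naive level-set localisations fail for this nonlinearity. You would need to exhibit an analogous absorption \emph{inside} the dyadic scheme, which your sketch does not do. In the critical case $q=\frac{2}{N+1}$ your sketch is also incomplete where it matters: the source term is not handled by Gagliardo--Nirenberg alone but by a Young inequality in the Orlicz pairing generated by $\Gamma(s)=s^{\frac{N^2+N+2}{N(N+1)}}\exp\big(\frac{N+2}{N}s^{\frac{2}{N+1}}\big)$, whose conjugate grows like $s(\log^* s)^{\frac{N+1}{2}}$ --- this is precisely where the hypothesis on $f$ is used --- and the test function there is $(\log(1+|s|))^{\frac{N+1}{2}}\sgn(s)$ itself, not its primitive.
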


The connection between the power of the gradient term and the space of initial data is illustrated in the following figure.
\begin{figure}[H]
\centering
\begin{tikzpicture}
\draw [->,thin] (0,0) -- (10,0);
\node[circle,draw=black, fill=white, inner sep=0pt,minimum size=4pt] at (0,0) {};
\fill (0,0) circle (0pt) node[below] {$0$};
\node[circle,draw=black, fill=white, inner sep=0pt,minimum size=4pt] at (3,0) {};
\fill (3,0) circle (0pt) node[below] {$\frac{2}{N+1}$};
\node[circle,draw=black, fill=black, inner sep=0pt,minimum size=4pt] at (6,0) {};
\fill (6,0) circle (2pt) node[below] {$\frac{4}{N+2}$};

  \node[circle,draw=black, fill=white, inner sep=0pt,minimum size=4pt] at (9,0) {};
\fill (9,0) circle (0pt) node[below] {$2$};
  \node[circle,draw=black, fill=white, inner sep=0pt,minimum size=0pt] at (9,0) {};
\fill (10,0) circle (0pt) node[below right] {$q$};
\draw [decorate,decoration={brace,amplitude=5pt,mirror,raise=5ex}]
  (3,0) -- (9,0) node[midway,yshift=-3.5em]{$\si=\frac{Nq}{2-q}$};
\draw [decorate,decoration={brace,amplitude=5pt,raise=2ex}]
  (6,0) -- (9,0) node[midway,yshift=2.5em]{$\si\ge2$};
\draw [decorate,decoration={brace,amplitude=5pt,raise=3ex}]
  (3,0) -- (6,0) node[midway,yshift=3em]{$1<\si<2$};
 \draw [decorate,decoration={brace,amplitude=5pt,raise=2ex}]
  (0,0) -- (3,0) node[midway,yshift=2.5em]{$\si=1$};
\end{tikzpicture}
\end{figure}

We briefly explain why we have chosen exactly the value $\si=Nq/(2-q)$ when $q>2/(N+1)$, i.e. $\si>1$. Suppose to have a problem of the type
\[
\pat u-\D u=\av{u}^\lm\av{\N u}^q\qq\t{with}\q 0< \lm<1.
\]
Then, reasoning as in \cite[Appendix C]{M} (see also \cite[Section 6]{MO}), we can prove that the lowest Lebesgue regularity on the initial data that allows to obtain an a priori estimate is
\[
\si_\lm=\frac{N(q-1+\lm)}{2-q}\qq\t{with}\q q>1-\lm.
\]
The argument used in the aforementioned works fails when $\lm$ is exactly $1$, due to the criticality of this case. However, we can justify the value $\si=Nq/(2-q)$  as the limit of $\si_\lm$ as $\lm\to 1^+$.

We finally introduce our result on finite energy solutions for small values of $q$ when the data $u^0$ and $f$ have greater integrability than in Theorem \ref{teo:small}.

\begin{proposition}[Global existence of energy solutions with small $q$]\label{prop:energy}
Assume that \eqref{eq:A} and \eqref{H} are in force, and that  the growth parameter $q$  verifies
\[
0<q<\frac{4}{N+2}.
\]
Assume also that  the data $u^0(x)$ and $f(t,x)$ satisfy
\begin{align*}
u^0\in L^2(\Omega),\qq f\in L^r(0,T;L^m(\Omega))\q\t{with}\q\frac{N}{m}+\frac{2}{r}= \frac{N+4}{2}\,,\q 1\le r\le 2\,,\q \frac{2N}{N+2}\le m\le 2\,.
\end{align*}
We assume that their norms are sufficiently small.
Then there exists a   finite energy solution  $u$ of problem \eqref{eq:pb}.

\end{proposition}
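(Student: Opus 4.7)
The plan is to mirror the approximation-and-estimate scheme of Section \ref{sec:small}, but to exploit the improved integrability of $u^0$ and $f$ in order to recover the finite-energy formulation throughout the whole range $0<q<\frac{4}{N+2}$. I consider approximating problems with $u_n^0=T_n(u^0)$, $f_n=T_n(f)$ and $H$ replaced by a bounded truncation $H_n$ that converges pointwise to $H$ and satisfies $|H_n|\le|H|$; standard parabolic theory then yields solutions $u_n\in L^2(0,T;H^1_0(\Omega))\cap L^\infty(Q_T)$.

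The core a priori estimate comes from testing with $u_n$ itself. Using \eqref{A1}, \eqref{H} and \eqref{alfagamma1},
\begin{equation*}
\frac{d}{dt}\|u_n\|_2^2+2\alpha\|\N u_n\|_2^2\le 2\int_\Omega u_n^2|\N u_n|^q\,dx+2\int_\Omega |f\,u_n|\,dx.
\end{equation*}
H\"older with exponents $\frac{2}{2-q},\frac{2}{q}$ gives $\int u_n^2|\N u_n|^q\le\|u_n\|_p^2\|\N u_n\|_2^q$ with $p=\frac{4}{2-q}$; since $q<\frac{4}{N+2}$ one has $p<2^*$, so the Gagliardo--Nirenberg inequality yields $\|u_n\|_p\le C\|\N u_n\|_2^{Nq/4}\|u_n\|_2^{1-Nq/4}$, whence
\begin{equation*}
\int u_n^2|\N u_n|^q\le C\|\N u_n\|_2^{q(N+2)/2}\|u_n\|_2^{2-Nq/2}.
\end{equation*}
The assumption $q<\frac{4}{N+2}$ is precisely what keeps the exponent of $\|\N u_n\|_2$ strictly below $2$, so Young's inequality absorbs it into the diffusion:
\begin{equation*}
\int u_n^2|\N u_n|^q\le\eps\|\N u_n\|_2^2+C_\eps\|u_n\|_2^{\rho},\qq \rho=\frac{2(4-Nq)}{4-(N+2)q}>2.
\end{equation*}
The source term is controlled via H\"older combined with the parabolic embedding scaled exactly by $\frac{N}{m}+\frac{2}{r}=\frac{N+4}{2}$: setting $\theta=\frac{N(m'-2)}{2m'}\in(0,1)$, a direct computation gives $\theta r'=2$, and Gagliardo--Nirenberg followed by H\"older in time produces $\|u_n\|_{L^{r'}(0,t;L^{m'})}\le C\|u_n\|_{L^\infty(0,t;L^2)}^{1-\theta}\|\N u_n\|_{L^2(0,t;L^2)}^{\theta}$; one more application of Young absorbs the gradient factor.

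Writing $Y(t)=\sup_{s\le t}\|u_n(s)\|_2^2$ and $Z(t)=\int_0^t\|\N u_n\|_2^2\,ds$, integration in time yields, for every $t\in[0,T]$,
\begin{equation*}
Y(t)+\alpha Z(t)\le\|u^0\|_2^2+C\,t\,Y(t)^{\rho/2}+C\|f\|_{L^r(L^m)}^{2/(2-\theta)}\,Y(t)^{(1-\theta)/(2-\theta)}.
\end{equation*}
Since $\rho/2>1$ while $\frac{1-\theta}{2-\theta}<1$, provided $\|u^0\|_2$ and $\|f\|_{L^r(L^m)}$ are small enough, a standard continuity/bootstrap argument shows that $Y(t)$ stays below a prescribed threshold on the whole interval $[0,T]$, uniformly in $n$. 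This gives the uniform bounds $\{u_n\}\subset L^\infty(0,T;L^2(\Omega))\cap L^2(0,T;H^1_0(\Omega))$ together with an $L^1(Q_T)$ bound on $H_n(t,x,u_n,\N u_n)$. The passage to the limit then follows a by-now standard route: weak compactness and Aubin--Lions give $u_n\to u$ strongly in $L^2(Q_T)$ and a.e., while strong convergence of $\N u_n$ in $L^2(Q_T)$ is obtained by a Boccardo--Murat type monotonicity argument exploiting \eqref{A3} and the $L^1$ control of the right-hand side (the same scheme employed in Section \ref{sec:small}). Choosing $u$ as test function in the limiting equation produces the finite-energy identity \eqref{energy}.

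The principal obstacle is the bootstrap step: the differential inequality is of genuine blow-up type because $\rho/2>1$, so global solvability on $[0,T]$ truly relies on smallness of the data and a careful matching of the absorption constants (otherwise $Y$ could escape to infinity in finite time). The identification of the limit of $|u_n||\N u_n|^q$, which requires strong convergence of gradients, is a technically standard but essential additional step.
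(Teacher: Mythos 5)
Your proposal is correct and follows essentially the same route as the paper: test the approximate problem with $u_n$, estimate the superlinear term by H\"older plus Gagliardo--Nirenberg/Sobolev interpolation (the condition $q<\frac{4}{N+2}$ being exactly what keeps the resulting power of $\|\N u_n\|_2$ below $2$), treat the source term via the parabolic interpolation dictated by $\frac Nm+\frac2r=\frac{N+4}{2}$, and close with a smallness-plus-continuity argument before passing to the limit by the compactness scheme of Section \ref{sec:small}. The only (immaterial) difference is the Young splitting: you absorb the gradient entirely and are left with a superlinear power $Y^{\rho/2}$ of the $L^2$ norm, whereas the paper keeps the full dissipation integral multiplied by a small power of $\sup_\tau\int_\Omega u_n^2$; both versions are closed by the same $T_n^*$-type continuity argument.
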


\subsection{The approximating problem}
We consider the following approximating problem:
\begin{equation}\label{eq:pbn}
\begin{cases}
\begin{aligned}
\pat u_n-\dive{a(t,x,u_n,\N u_n)}&=T_n\pare{H(t,x,u_n,\N u_n)}&&\t{in }Q_T,\\
u_n(0,x)&=T_n(u^0(x))&&\t{in }\Omega,\\
u_n(t,x)&=0&&\t{on }(0,T)\times\partial\Omega.
\end{aligned}
\end{cases}
\end{equation}
The approximate solution $u_n$ exists (see \cite[Theorem 1.1 and Comment 1.2]{BMP} and \cite[Theorem 1.1]{P99}) and verifies, for every fixed $n\in\mathbb{N}$,
\[
u_n\in L^2(0,T;H_0^1(\Omega))\cap L^\infty(Q_T)\cap C([0,T];L^q(\Omega)) \q \forall\,q<\infty,\q \pat u_n\in L^2(0,T;H^{-1}(\Omega)),
\]
and also the weak formulation
\begin{align}\label{eq:sol-pbn}
\intO u_n(t,x)v(t,x)\,dx&-\intO T_n(u^0(x))v(0,x)\,dx-\int_{0}^t\langle\pa_\tau v, u_n\rangle\,d\tau
+\iint_{Q_t}a(\tau,x,u_n,\N u_n)\cdot\N v\,dx\,d\tau\nonumber\\
&=\iint_{Q_t}T_n\pare{H(\tau,x,u_n,\N u_n)}v\,dx\,d\tau
\end{align}
for every $t\in (0,T]$ and for every test function $v(\tau,x)$ such that
\[
v\in L^2(0,t;H_0^1(\Omega))\cap L^\infty(Q_t),\q \pa_\tau v\in L^2(0,t;H^{-1}(\Omega))+L^1(Q_t).
\]
Moreover, for every continuous and piecewise $C^1$ function $\varphi$  such that $\varphi(0)=0$, one has
\begin{align*}
\intO \Phi(u_n(t,x))\,dx &- \intO \Phi(T_n(u^0(x)))\,dx + \iint_{Q_t}a(\tau,x,u_n,\N u_n)\cdot\N u_n\,\varphi'(u_n)\,dx\,d\tau \nonumber\\
&=\iint_{Q_t}T_n\pare{H(\tau,x,u_n,\N u_n)}\varphi(u_n)\,dx\,d\tau,
\end{align*}
where $\Phi(s)$ is a primitive function of $\varphi(s)$ on $\R$; this implies that the function $t\mapsto \intO \Phi(u_n(t,x))\,dx$ is absolutely continuous in $[0,T]$, and that
\begin{align}
\frac{d}{dt}&\intO \Phi(u_n(t,x))\,dx + \int_\Omega a(t,x,u_n(t,x),\N u_n(t,x))\cdot\N u_n(t,x)\,\varphi'(u_n(t,x))\,dx \nonumber\\
&=\int_{\Omega}T_n\pare{H(t,x,u_n(t,x),\N u_n(t,x))}\varphi(u_n(t,x))\,dx \qquad\mbox{for a.e.\ $t\in[0,T]$}\label{ftestu'}
\end{align}
(see, for instance, \cite[Lemma 2.5]{Ppower}).

\section{Proof of Theorem \ref{teo:global-bdd} - Global existence with exponentially integrable initial datum} 
\label{sec:exp-bdd}

\subsection{A priori estimates}

In the proof of Theorem \ref{teo:global-bdd}, we will use the following logarithmic Sobolev inequality (see \cite{DGS}), which is of the same kind introduced by Gross in \cite{Gr}.

\begin{proposition}\label{prop:adams}
{\bf (Logarithmic Sobolev inequality)}
For every $\kappa>0$,
there exists a
positive constant $\overline C$  (depending on $N$, $|\Omega|$, $\kappa$)
 such that, for every $\eps>0$ and for every $v\in H^1_0(\Omega)$,
$$
\int_\Omega |v(x)|^2 \pare{\log^* |v(x)|}^\kappa \,dx \leq
\overline C\,\bigg[\eps\int_\Omega |\nabla
v(x)|^2 \,dx + \normaL{v}{2}^2\bigg(\log^{*}\bigg(\frac{1}{\eps}\bigg)\bigg)^\kappa+
\normaL{v}{2}^2 \big(\log^*\normaL{v}{2}\big)^\kappa\bigg] \,.
$$

\end{proposition}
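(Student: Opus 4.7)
The inequality is a Gross-type logarithmic Sobolev inequality for $H_0^1(\Omega)$; I would prove it by combining the Sobolev embedding with Gagliardo--Nirenberg interpolation and a careful threshold optimization. A preliminary rescaling $v\mapsto v/\|v\|_2$, together with the subadditivity
$$(\log^*|v|)^\kappa\le 2^{(\kappa-1)_+}\big[(\log^*(|v|/\|v\|_2))^\kappa+(\log^*\|v\|_2)^\kappa\big],$$
reduces the task to the normalized case $\|v\|_2=1$ and explains the appearance of the term $\|v\|_2^2(\log^*\|v\|_2)^\kappa$ on the right-hand side.

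Assuming henceforth $\|v\|_2=1$, I would split $\int_\Omega |v|^2(\log^*|v|)^\kappa\,dx$ at a threshold $\lambda>0$. The sublevel part $\{|v|\le\lambda\}$ contributes at most $(\log^*\lambda)^\kappa$. On the superlevel set $\{|v|>\lambda\}$, I apply H\"older with exponents $(p/2, p/(p-2))$ for some $p\in(2,2^*)$, the Gagliardo--Nirenberg inequality
$$\|v\|_p\le C\,\|\nabla v\|_2^\theta\|v\|_2^{1-\theta},\qquad\theta=\frac{N(p-2)}{2p},$$
and the elementary bound $(\log s)^{\kappa p/(p-2)}\le C\,s$, valid for $s\ge e^{\kappa p/(p-2)}$. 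Choosing $\lambda\ge e^{\kappa p/(p-2)}$, one obtains
$$\int_{|v|>\lambda} v^2(\log|v|)^\kappa\,dx\le C_p\,|\Omega|^{(p-2)/(2p)}\,\|\nabla v\|_2^{2\theta},$$
and Young's inequality in the form $\|\nabla v\|_2^{2\theta}\le\theta\epsilon\|\nabla v\|_2^2+(1-\theta)\epsilon^{-\theta/(1-\theta)}$ extracts the desired $\epsilon$-coefficient in front of $\|\nabla v\|_2^2$.

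The free parameter $\epsilon$ is finally produced by tuning $p$ and $\lambda$ as functions of $\epsilon$: the right choice is $p-2\sim 1/\log^*(1/\epsilon)$ and $\lambda\sim e^{c/(p-2)}$, which makes $(\log^*\lambda)^\kappa$ of order $(\log^*(1/\epsilon))^\kappa$ while keeping the correction $\epsilon^{-\theta/(1-\theta)}$ bounded (since $\theta\to 0$ as $p\to 2^+$). The main obstacle is precisely this bookkeeping, ensuring that the $\epsilon\|\nabla v\|_2^2$ and $\|v\|_2^2(\log^*(1/\epsilon))^\kappa$ terms appear with the exact shape asserted, without residual dependence on $\|v\|_2$ or any polynomial dependence on $1/\epsilon$ contaminating the gradient coefficient. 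A slicker alternative route is to start from the Euclidean logarithmic Sobolev inequality with parameter,
$$\int v^2\log\bigl(v^2/\|v\|_2^2\bigr)\,dx\le\epsilon\|\nabla v\|_2^2+C\|v\|_2^2\log^*(1/\epsilon),$$
which is the case $\kappa=1$ of the statement, and then to bootstrap to general $\kappa>0$ either by iteration or by a Young--Orlicz duality argument with the Young function $\Psi(s)=s(\log^*s)^\kappa$, whose convex conjugate grows exponentially.
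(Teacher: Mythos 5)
First, a point of comparison: the paper does not prove Proposition \ref{prop:adams} at all --- it is quoted from \cite{DGS} --- so there is no internal proof to measure you against; I can only assess your argument on its own terms and against the standard route. Your main argument is viable, and the bookkeeping you flag as the ``main obstacle'' does close. The reduction to $\|v\|_2=1$ via $\log^*(ab)\le\log^*a+\log^*b$ is correct and produces exactly the term $\|v\|_2^2(\log^*\|v\|_2)^\kappa$. The constant hidden in your $C_p$ behaves like $(p-2)^{-\kappa}$ (it comes from $(\log s)^{M}\le (M/e)^{M}s$ with $M=\kappa p/(p-2)$, raised to the power $(p-2)/p$), so with $p-2\sim 1/\log^*(1/\eps)$ it is of order $(\log^*(1/\eps))^{\kappa}$; since $\theta\sim(p-2)$, the gradient coefficient after Young is $(\log^*(1/\eps))^{\kappa-1}\epsilon'$, which the free parameter $\epsilon'$ absorbs, while $(\epsilon')^{-\theta/(1-\theta)}=\exp\big(O(\theta\log(1/\epsilon'))\big)$ stays bounded because $\theta\,\log^*(1/\eps)=O(1)$, and the constant term lands precisely on $(\log^*(1/\eps))^\kappa$. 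That said, a cleaner and more standard route (presumably the one behind the cited result) avoids the limit $p\to2^+$ entirely: fix the Gagliardo--Nirenberg exponent $p=2(N+2)/N$ once and for all and split instead at the level $M=\|v\|_2\,\eps^{-N/4}$. On $\{|v|\le M\}$ the integral is at most $\|v\|_2^2(\log^*M)^\kappa$, and on $\{|v|>M\}$ one writes $(\log^*|v|)^\kappa\le C\big[(\log^*M)^\kappa+(\log^*(|v|/M))^\kappa\big]$ and uses the elementary bound $(\log^*t)^\kappa\le C_{\kappa,N}\,t^{4/N}$ for $t\ge1$ with $t=|v|/M$, so that
\[
\int_{\{|v|>M\}}|v|^2\big(\log^*(|v|/M)\big)^\kappa\,dx\le C\,M^{-4/N}\int_\Omega|v|^{2+\frac4N}\,dx\le C\,\Big(\frac{\|v\|_2}{M}\Big)^{4/N}\|\nabla v\|_2^2= C\,\eps\,\|\nabla v\|_2^2,
\]
while $(\log^*M)^\kappa\le C\big[(\log^*\|v\|_2)^\kappa+(\log^*(1/\eps))^\kappa\big]$; no delicate constant-tracking is needed. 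Finally, I would not lean on your ``slicker alternative'': iterating the $\kappa=1$ inequality does not obviously generate $(\log^*|v|)^\kappa$ for general $\kappa$, and the Orlicz-duality variant requires the same parameter optimization you are trying to bypass, so treat that closing remark as a heuristic rather than a proof.
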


We define the following real functions:
\begin{equation}\label{PhiPsi}
\Phi(s)=e^{\delta s^2}\,,\qquad \varphi(s)= \Phi'(s)=2\delta s e^{\delta s^2}\,,\qquad \Psi(s)=\int_0^s\sqrt{\Phi''(z)}\, dz\,,
\end{equation}
where $\delta$ is the same as in \eqref{ipot-exp-u0}. Note that, using de l'H\^opital's rule,
\begin{equation}\label{ec:3.00}
\Psi(s)^2\sim 4\Phi(s)= 4 e^{\delta s^2}\qquad \hbox{as }s\to\infty\,, \qquad \Psi(s)^2\sim 2\delta s^2\qquad \hbox{as }s\to0\,.
\end{equation}
It follows that
\begin{equation}\label{pp}
\Psi(s)^2 \le c(\delta) \Phi(s)
\end{equation}
and that $\Psi(u^0)\in L^2(\Omega)$.

\begin{proposition}\label{prop:stimaprioriexp}
Under the assumptions of Theorem \ref{teo:global-bdd}, the solutions $u_n$ of the approximate problems \eqref{eq:pbn} satisfy
\begin{equation}\label{stimaprioriexp}
\normaL{\Psi(u_n)}{L^\infty(0,T;L^2(\Omega))}^2 + \normaL{\nabla\Psi(u_n)}{L^2(Q_T)}^2 \le C\,,
\end{equation}
where $C>0$ is a constant which depends on $N$, $|\Omega|$ and on the data through the values $\int_\Omega e^{\delta (u^0)^2}\,dx$ and $\int_0^T \normaL{f(t)}{m}^r\left(\log^*{\normaL{f(t)}{m}}\right)^{r/2}dt$.
Moreover, the following estimate holds:
\begin{equation}\label{altrastimapriori0}
\iint_{Q_T}\big |T_n(H(t,x,u_n,\nabla u_n))\,\varphi(u_n)\big| \,dx\,dt\le C\,.
\end{equation}
\end{proposition}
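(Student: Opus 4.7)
My plan is to test the approximate equation \eqref{eq:pbn} with the exponential function $\varphi(u_n)=\Phi'(u_n)$ via the identity \eqref{ftestu'}, and then exploit the structural relation $\Phi''=(\Psi')^{2}$ together with the logarithmic Sobolev inequality (Proposition \ref{prop:adams}) to close the estimate through a Gronwall--Osgood argument. Using the coercivity \eqref{A1} with $\alpha=1$, the growth \eqref{H} with $\gamma=1$, and $|T_n(H)|\le|H|$, the test yields
\[
\frac{d}{dt}\int_\Omega\Phi(u_n)\,dx+\int_\Omega|\nabla\Psi(u_n)|^{2}\,dx\;\le\;\int_\Omega|u_n|\,|\nabla u_n|^{q}\,|\varphi(u_n)|\,dx+\int_\Omega|f|\,|\varphi(u_n)|\,dx.
\]

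\textbf{Gradient nonlinearity.} Since $q<2$, I split $|u_n|\,|\nabla u_n|^{q}\,|\varphi(u_n)|=\bigl[|\nabla u_n|^{q}\Phi''(u_n)^{q/2}\bigr]\cdot\bigl[|u_n|\,|\varphi(u_n)|\,\Phi''(u_n)^{-q/2}\bigr]$ and apply Young with conjugate exponents $2/q$ and $2/(2-q)$. The first factor gives a fraction of $\|\nabla\Psi(u_n)\|_{2}^{2}$, while the second can be controlled explicitly via the asymptotics \eqref{ec:3.00}:
\[
(|s|\,|\varphi(s)|)^{2/(2-q)}\Phi''(s)^{-q/(2-q)}\;\le\;C\bigl(1+\Psi(s)^{2}\,\log^{*}\Psi(s)\bigr).
\]
The resulting $\int\Psi^{2}\log^{*}\Psi\,dx$ is absorbed by Proposition \ref{prop:adams} with $\kappa=1$, at the cost of an additional small fraction of $\|\nabla\Psi(u_n)\|_{2}^{2}$ plus a term $C\bigl(\|\Psi(u_n)\|_{2}^{2}+\|\Psi(u_n)\|_{2}^{2}\log^{*}\|\Psi(u_n)\|_{2}\bigr)$.

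\textbf{Source term.} From \eqref{ec:3.00} one also gets the pointwise bound $|\varphi(s)|\le C+C\,\Psi(s)^{2}(\log^{*}\Psi(s))^{1/2}$. H\"older in space with exponents $m,m'$, followed by the Gagliardo--Nirenberg inequality (which applies because $m\ge N$ guarantees $2m'\le 2^{*}$), gives
\[
\int_\Omega|f|\,|\varphi(u_n)|\,dx\;\le\;C\|f\|_{m}\,\|\nabla\Psi(u_n)\|_{2}^{N/m}\,\|\Psi(u_n)\|_{2}^{2-N/m}\,(\log^{*}\|\Psi(u_n)\|_{2})^{1/2}+C\|f\|_{1}.
\]
A Young-type step with conjugate exponents $2m/N$ and $r=2m/(2m-N)$ (this is precisely the identity $N/m+2/r=2$) absorbs $\|\nabla\Psi(u_n)\|_{2}^{2}$ into the left-hand side, while a log-weighted Young redistribution produces, in the end,
\[
\int_\Omega|f|\,|\varphi(u_n)|\,dx\;\le\;\eps\|\nabla\Psi(u_n)\|_{2}^{2}+C\,\|f\|_{m}^{r}(\log^{*}\|f\|_{m})^{r/2}+C\bigl(1+\|\Psi(u_n)\|_{2}^{2}(\log^{*}\|\Psi(u_n)\|_{2})^{r/2}\bigr),
\]
where the Orlicz hypothesis \eqref{ipot-exp-f} exactly matches the first data term and the log exponent on $\|\Psi\|_2$ is kept at $r/2\le 1$.

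\textbf{Gronwall closure and main obstacle.} Setting $M_n(t)=\int_\Omega\Phi(u_n(t,x))\,dx$, the bound \eqref{pp} gives $\|\Psi(u_n(t))\|_{2}^{2}\le c(\delta)M_n(t)$, and the previous steps combine into the differential inequality
\[
M_n'(t)+\tfrac{1}{2}\|\nabla\Psi(u_n(t))\|_{2}^{2}\;\le\;C\,M_n(t)\,(\log^{*}M_n(t))^{r/2}+C\,g(t),
\]
with $g(t)=\|f(t)\|_{m}^{r}(\log^{*}\|f(t)\|_{m})^{r/2}+1\in L^{1}(0,T)$ by \eqref{ipot-exp-f}. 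Because $r\le 2$, the function $\rho(s)=s(\log^{*}s)^{r/2}$ satisfies Osgood's condition $\int^{\infty}ds/\rho(s)=\infty$, so a Gronwall--Bihari lemma (as used, e.g., in \cite{DGS}) yields $M_n(t)\le C$ uniformly on $[0,T]$, depending only on $\int_\Omega e^{\delta(u^0)^2}\,dx$ and $\|g\|_{L^{1}(0,T)}$. Integrating in $t$ gives the bound \eqref{stimaprioriexp}, and \eqref{altrastimapriori0} then follows by feeding this bound back into the right-hand side estimates above. \emph{The main obstacle} is the source-term step: producing exactly the matching pair $(\log^{*}\|f\|_{m})^{r/2}$ and $(\log^{*}\|\Psi\|_{2})^{r/2}$---with the latter sitting at the borderline exponent $r/2\le 1$ that makes Osgood's criterion barely hold---requires a delicate log-weighted Young inequality tied to the critical scaling $N/m+2/r=2$; any heavier log weight on the $\Psi$ side would break the Osgood integrability and hence the global-in-time closure.
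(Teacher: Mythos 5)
Your overall architecture coincides with the paper's: test \eqref{ftestu'} with $\varphi(u_n)$, use $\Phi''=(\Psi')^2$ and Young's inequality with exponents $(2/q,\,2/(2-q))$ on the gradient term, control the resulting $\int_\Omega\Psi(u_n)^2\log^*\Psi(u_n)\,dx$ by Proposition \ref{prop:adams} with $\kappa=1$, and close with an Osgood--Gronwall argument; the gradient part of your argument is sound and matches the paper's step for step. The problem is in the source term, which you yourself flag as ``the main obstacle'' but then resolve only by assertion, and the asserted inequality is not correct as written.

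Concretely: from $|\varphi(s)|\le C+C\,\Psi(s)^2(\log^*\Psi(s))^{1/2}$ you claim that H\"older in space followed by Gagliardo--Nirenberg yields a bound featuring the factor $(\log^*\normaL{\Psi(u_n)}{2})^{1/2}$ \emph{outside} the integral. This step is not available: the map $s\mapsto s\log^* s$ is convex, so Jensen gives $\int_\Omega\Psi^2(\log^*\Psi)^{1/2}\,dx\gtrsim$ (not $\lesssim$) the corresponding quantity built from $\normaL{\Psi}{2}$, and pulling the logarithm out of the integral is precisely what the logarithmic Sobolev inequality is for --- at the unavoidable cost of an extra $\eps\int_\Omega|\nabla\Psi(u_n)|^2\,dx$ with $\eps$ that must be taken of order $\normaL{f(t)}{m}^{-r}$ to absorb the gradient. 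This is why, in the paper, H\"older is applied with the three exponents $\tfrac1m+\tfrac2{2^*r'}+\tfrac1r=1$, the factor $\big(\int_\Omega\Psi^2(\log^*\Psi)^{r/2}\big)^{1/r}$ is kept intact, and Proposition \ref{prop:adams} is invoked a second time with $\kappa=r/2$ and $2\eps=1/(c\normaL{f(t)}{m}^r)$. The consequence you cannot avoid is that the coefficient of the superlinear term in the final differential inequality is \emph{time-dependent}: one gets $\xi_n'\le \Upsilon(t)\,\xi_n\log^*\xi_n+\Theta(t)$ with $\Upsilon(t)=c\big[1+\normaL{f(t)}{m}^r(1+\log^*\normaL{f(t)}{m})^{r/2}\big]\in L^1(0,T)$, not $M_n'\le C\,M_n(\log^*M_n)^{r/2}+Cg(t)$ with an absolute constant $C$. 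Your ``log-weighted Young redistribution'' would require splitting the product $\normaL{f(t)}{m}^r\cdot\normaL{\Psi(u_n)}{2}^2(\log^*\normaL{\Psi(u_n)}{2})^{r/2}$ into the sum of a function of $f$ alone and a function of $\Psi(u_n)$ alone; no such inequality holds (any attempt either inflates the log exponent on the $\Psi$ side beyond $1$, breaking Osgood, or produces an exponential of $\normaL{f}{m}$ that the hypothesis \eqref{ipot-exp-f} does not control). The conclusion is still reached, but only via the $L^1$-coefficient form of the Gronwall--Osgood lemma, exactly as the paper does; as stated, your closing differential inequality does not follow from your preceding steps.
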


\begin{proof}
We take $\varphi(u_n)$ (as defined in \eqref{PhiPsi})
 in \eqref{ftestu'}, obtaining, for a.e. $t\in(0,T)$,
\begin{align}
\frac d{dt}\int_\Omega \Phi(u_n)\,dx+\int_\Omega \varphi'(u_n)|\nabla u_n|^2\,dx & \le \int_\Omega\big |T_n(H(t,x,u_n,\nabla u_n))\,\varphi(u_n)\big| \,dx \nonumber\\
&\le \int_\Omega 2\delta u_n^2 e^{\delta u_n^2}|\nabla u_n|^q\,dx+\int_\Omega 2\delta |u_n|
e^{\delta u_n^2}|f|\,dx\,,\label{ec:3.01}
\end{align}
where we have used the assumptions \eqref{A1}, \eqref{H} and \eqref{alfagamma1}.\\
We now examine the terms in \eqref{ec:3.01}. First notice that, by definition of $\Psi$,
\begin{equation}\label{ec:3.02}
\int_\Omega \varphi'(u_n)|\nabla u_n|^2\,dx = \int_\Omega |\nabla \Psi(u_n)|^2\,dx\,.
\end{equation}
As far as the right hand side is concerned, we observe that $2\delta u_n^2 e^{\delta u_n^2}\le  (2\delta)^{-1} \varphi'(u_n)$.
Thus, Young's inequality and \eqref{ec:3.00} lead to
\begin{align*}
&\int_\Omega 2\delta u_n^2 e^{\delta u_n^2}|\nabla u_n|^q\,dx\le \eps \int_\Omega \varphi'(u_n)|\nabla u_n|^2\,dx+c(\eps) \int_\Omega \delta u_n^2 e^{\delta u_n^2}\,dx\\
&\le\eps  \int_\Omega |\nabla \Psi(u_n)|^2\,dx+c(\eps) \bigg[\int_\Omega \Psi(u_n)^2 \log^*{\Psi(u_n)}\,dx\  +\, 1\bigg]\,.
\end{align*}
On account of \eqref{ec:3.02}, we apply this inequality to write \eqref{ec:3.01} as
\begin{equation}\label{ec:3.03}
\frac d{dt}\int_\Omega \Phi(u_n)\,dx+\int_\Omega |\nabla \Psi(u_n)|^2\,dx\le c\int_\Omega \Psi(u_n)^2 \log^*{\Psi(u_n)}\,dx+c\int_\Omega \delta |u_n| e^{\delta u_n^2}|f|\,dx\, + c.
\end{equation}
Applying Proposition \ref{prop:adams} with $\kappa=1$, we obtain
\begin{equation*}
\int_\Omega \Psi(u_n)^2 \log^*{\Psi(u_n)}\,dx\le
 \eps\int_\Omega |\nabla
\Psi(u_n)|^2 \,dx + c(\eps)  \normaL{\Psi(u_n)}2^2 + c
\normaL{\Psi(u_n)}2^2 \log^*\normaL{\Psi(u_n)}2\,.
\end{equation*}
Absorbing again the first term, \eqref{ec:3.03} becomes
\begin{align}\label{ec:3.04}
\frac d{dt}\int_\Omega \Phi(u_n)\,dx+\int_\Omega |\nabla \Psi(u_n)|^2\,dx&\le c\bigg(\int_\Omega \Psi(u_n)^2\,dx\bigg) \log^*\bigg(\int_\Omega \Psi(u_n)^2\,dx\bigg)\nonumber\\
&\q+c\pare{\int_\Omega   |u_n| e^{\delta u_n^2}|f|\,dx+1}.
\end{align}
Finally, we have to study the last integral  of the right hand side. We begin by noting that
\[\int_\Omega   |u_n| e^{\delta u_n^2}|f|\,dx\le c \int_\Omega |f| \big(1+\Psi(u_n)^2 \left(\log^*{\Psi(u_n)}\right)^{1/2}\big)\,dx.\]
Since
\[\frac1m+\frac{2}{2^*r'}+\frac1r=
1\,,\]
we may apply H\"older's inequality obtaining
\begin{align*}
\int_\Omega |f| \Psi(u_n)^2 \left(\log^*{\Psi(u_n)}\right)^{1/2}\,dx \le  \normaL{f(t)}{m}\normaL{\Psi(u_n)}{2^*}^{2/r'}\left(\int_\Omega \Psi(u_n)^2\left(\log^*{\Psi(u_n)}\right)^{r/2}\,dx\right)^{1/r}.
\end{align*}
We next apply Young's  and then Sobolev's inequalities to get
\begin{align*}
\int_\Omega |f| \Psi(u_n)^2 \left(\log^*{\Psi(u_n)}\right)^{1/2} \,dx&\le \eps \normaL{\Psi(u_n)}{2^*}^{2}\,dx+C(\eps)
\normaL{f(t)}{m}^r\int_\Omega \Psi(u_n)^2\left(\log^*{\Psi(u_n)}\right)^{r/2}\,dx\\
&\le \eps c \normaL{\nabla \Psi(u_n)}2^{2}\,dx+C(\eps)
\normaL{f(t)}m^r\int_\Omega \Psi(u_n)^2\left(\log^*{\Psi(u_n)}\right)^{r/2}\,dx.
\end{align*}
Going back to \eqref{ec:3.04}, if $\eps$ is small enough the first term on the right hand side can be absorbed, therefore
\begin{align}\label{ec:3.05}
\frac d{dt}\int_\Omega \Phi(u_n)\,dx+\int_\Omega |\nabla \Psi(u_n)|^2\,dx
&\le c\bigg(\int_\Omega \Psi(u_n)^2\,dx\bigg) \log^*\bigg(\int_\Omega \Psi(u_n)^2\,dx\bigg)+ c\pare{\normaL{f(t)}1+1} \nonumber\\
&\q
+c\normaL{f(t)}m^r\int_\Omega \Psi(u_n)^2\left(\log^*{\Psi(u_n)}\right)^{r/2}\,dx.
\end{align}
Applying Proposition \ref{prop:adams} with $\kappa=r/2$ to the last term, we obtain
\begin{align*}
&c\normaL{f(t)}m^r\int_\Omega \Psi(u_n)^2\left(\log^*{\Psi(u_n)}\right)^{r/2}\,dx\\
&\le
\eps c \normaL{f(t)}m^r \int_\Omega |\nabla \Psi(u_n)|^2+c\normaL{f(t)}m^r\bigg[\big(\log^*(1/\eps)\big)^{r/2}+\bigg(\log^*\pare{\int_\Omega\Psi(u_n)^2}\bigg)^{r/2}\bigg]\int_\Omega\Psi(u_n)^2\,dx.
\end{align*}

In order to absorb the first term on the right hand side, we  choose $2\eps=\frac1{c \|f(t)\|_m^r}$, so that \eqref{ec:3.05} becomes
\begin{align*}%
&\frac d{dt}\int_\Omega \Phi(u_n)\,dx+\int_\Omega |\nabla \Psi(u_n)|^2\,dx\nonumber\\
&\le c  \bigg[ \bigg(\int_\Omega \Psi(u_n)^2\,dx\bigg) \log^*\left(\int_\Omega \Psi(u_n)^2\,dx\right) +  \normaL{f(t)}1 + \normaL{f(t)}m^r\pare{1+\log^*\normaL{f(t)}m}^{r/2}\int_\Omega \Psi(u_n)^2\,dx\nonumber\\
&\qq+\normaL{f(t)}m^r\int_\Omega \Psi(u_n)^2\left(\log^*\pare{\int_\Omega \Psi(u_n)^2\,dx}\right)^{r/2}\,dx+ 1
\bigg]   \nonumber\\
&\le c\left[1+\normaL{f(t)}m^r\pare{1+\log^*\normaL{f(t)}m}^{r/2}\right]\bigg(\int_\Omega \Psi(u_n)^2\,dx\bigg) \log^*\left(\int_\Omega \Psi(u_n)^2\,dx\right) + c\pare{\normaL{f(t)}1 + 1},
\end{align*}
where we have used the inequality $r\le 2$, which follows from the assumptions. Denoting
\[
\xi_n(t)=\int_\Omega \Phi(u_n)\,dx,
\quad \Upsilon(t)=c\left[1+\normaL{f(t)}m^r\pare{1+\log^*\normaL{f(t)}m}^{r/2}\right], \quad \Theta(t) = c\big(\!\normaL{f(t)}1+1\big),
\]
and having in mind \eqref{pp} and the assumption \eqref{ipot-exp-f} on $f$, we deduce that
\[\xi_n'(t)\le \Upsilon(t)\xi_n(t)\log^*\xi_n(t) + \Theta(t)\le \max\{\Upsilon(t),\Theta(t)\}\big(\xi_n(t)\log^*\xi_n(t)+1\big),\]
with $\Upsilon, \Theta\in L^1(0,T)$. It yields that $\xi_n(t)$ is bounded in $[0,T]$. Integrating we get, for every $t\in[0,T]$,
\begin{equation*}
\int_\Omega \Phi(u_n(t))
\,dx+\iint_{Q_t} |\nabla \Psi(u_n)|^2\,dx\,d\tau
\le c(T)+\int_\Omega \Phi(u_{0})
\,dx.
\end{equation*}
By \eqref{pp}, estimates of $\Psi(u_n)$ both in $L^\infty(0,T; L^2(\Omega))$ and in $L^2(0,T;H_0^1(\Omega))$ follow, as well as \eqref{altrastimapriori0}.

\end{proof}

Recalling  that the function $\Psi$ has an exponential growth (see \eqref{ec:3.00}),  we deduce from Proposition \ref{prop:stimaprioriexp} the following result.
\begin{corollary}\label{Cor:1}
The sequence of approximate solutions $\set{u_n}$ is bounded in
\[
L^2(0,T;H_0^1(\Omega))\cap L^\infty(0,T; L^p
(\Omega)) \quad\mbox{for all $p<\infty$.}
\]
\end{corollary}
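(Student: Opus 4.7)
The plan is to extract both bounds directly from the estimate \eqref{stimaprioriexp} of Proposition \ref{prop:stimaprioriexp} by exploiting the two asymptotic regimes of $\Psi$ described in \eqref{ec:3.00}: the quadratic behavior near $0$ gives $H^1$ control on $u_n$, while the exponential behavior at infinity delivers the $L^p$ bounds for every finite $p$.

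First I would address the $L^2(0,T;H_0^1(\Omega))$ bound. From the definition \eqref{PhiPsi}, $\Psi'(s)=\sqrt{\Phi''(s)}$, and a direct computation gives
\[
\Phi''(s)=(2\delta+4\delta^2 s^2)\,e^{\delta s^2}\ge 2\delta\quad\text{for every }s\in\R.
\]
Hence $\Psi'(u_n)\ge \sqrt{2\delta}$ pointwise, and by the chain rule
\[
|\nabla u_n|\le \frac{1}{\sqrt{2\delta}}\,|\nabla\Psi(u_n)|\qquad\text{a.e.\ in }Q_T.
\]
Integrating and using \eqref{stimaprioriexp} one immediately obtains $\|\nabla u_n\|_{L^2(Q_T)}\le C$, so $\{u_n\}$ is bounded in $L^2(0,T;H_0^1(\Omega))$.

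Next I would deduce the $L^\infty(0,T;L^p(\Omega))$ bounds. By \eqref{ec:3.00} the function $\Psi(s)^2$ is equivalent to $4e^{\delta s^2}$ at infinity and stays bounded on compact sets, so there exists $C_\delta>0$ with
\[
e^{\delta s^2}\le C_\delta\bigl(\Psi(s)^2+1\bigr)\qquad\text{for all }s\in\R.
\]
Applying this pointwise to $u_n(t,\cdot)$ and integrating over $\Omega$, the uniform $L^\infty(0,T;L^2(\Omega))$ bound on $\Psi(u_n)$ from \eqref{stimaprioriexp} yields
\[
\esssup_{t\in(0,T)}\int_\Omega e^{\delta u_n(t,x)^2}\,dx\le C.
\]
Finally, for any fixed $p<\infty$, the elementary inequality $|s|^p\le c_p\bigl(e^{\delta s^2}+1\bigr)$ (valid since $\delta s^2$ dominates $p\log|s|$ as $|s|\to\infty$) gives $\|u_n(t,\cdot)\|_{L^p(\Omega)}^p\le c_p\bigl(\int_\Omega e^{\delta u_n^2}\,dx+|\Omega|\bigr)$, uniformly in $t\in[0,T]$ and in $n$, which is the desired bound.

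No genuine obstacle is expected: everything follows mechanically from the two asymptotic regimes of $\Psi$ together with the a priori estimate already proven in Proposition \ref{prop:stimaprioriexp}. The only point deserving a line of care is the pointwise lower bound $\Psi'(s)\ge\sqrt{2\delta}$, which is what transfers the control of $\nabla\Psi(u_n)$ back to $\nabla u_n$.
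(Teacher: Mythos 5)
Your proposal is correct and follows essentially the same route as the paper, which deduces the corollary from the estimate \eqref{stimaprioriexp} precisely by invoking the exponential growth of $\Psi$ in \eqref{ec:3.00}; you have simply written out the details (the lower bound $\Psi'(s)=\sqrt{\Phi''(s)}\ge\sqrt{2\delta}$ for the gradient bound, and $|s|^p\le c_p(e^{\delta s^2}+1)\le c_p'(\Psi(s)^2+1)$ for the $L^\infty(0,T;L^p(\Omega))$ bounds) that the paper leaves implicit.
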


We point out that $\set{\Psi(u_n)}$ is bounded in $C([0,T]; L^2(\Omega))$,
therefore for every $p$ there exists a constant $C(p)>0$ such that for {\it every} (not only {\it almost every}) $t\in [0, T]$, then
\begin{equation*}%
\int_\Omega |u_n(t,x)|^p dx\le C(p) \qquad\hbox{ for all } n\in\bN.
\end{equation*}

\subsection{Compactness results}

\begin{proposition}\label{convS}
There exists a subsequence (still denoted by $\{u_n\}$) and a function $u\in L^2(0,T; H_0^1(\Omega))\cap L^\infty(0,T; L^p(\Omega))$ (for all $p<\infty$) such that:
\begin{align}
u_n&\to u && \t{a.e. in $Q_T$ and strongly in $L^p(Q_T)$, for all $p<\infty$} ,\label{Conv:1-exp}\\
\N u_n&\rightharpoonup \N u && \t{weakly in } (L^2(Q_T))^N,\label{Conv:3-exp} \\
\nabla T_k(u_n) &\to \nabla T_k(u) && \hbox{strongly in }(L^2(Q_T))^N\quad\forall k>0,\label{Conv:1.25-exp}\\
\N u_n&\to \N u && \t{a.e. in } Q_T,\label{Conv:1.1-exp}\\
T_n\pare{H(t,x,u_n,\N u_n)}&\to H(t,x,u,\N u)   && \t{strongly in } L^1(Q_T),\label{Conv:1.2-exp}\\
a(t,x,u_n,\nabla u_n)&\to a(t,x,u,\nabla u)&&\t{a.e. in }Q_T\,, \label{Conv:1.3-exp}\\
a(t,x,u_n,\nabla u_n)&\rightharpoonup a(t,x,u,\nabla u)&&\t{weakly in }(L^2(Q_T))^N,\label{Conv:1.45-exp}\\
u_n &\to u && \hbox{strongly in }C([0,T]; L^1(\Omega)).\label{Conv:1.3bis-exp}
\end{align}
\end{proposition}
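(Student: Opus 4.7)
The plan is to first produce compactness in time, then use it together with the monotonicity of $a$ to deduce almost everywhere convergence of the gradients, and finally upgrade the various weak convergences to strong via Vitali's theorem, leveraging the estimate \eqref{altrastimapriori0}.

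\textbf{Step 1 (Aubin--Lions and a.e.\ convergence of $u_n$).} From Corollary \ref{Cor:1}, $\{u_n\}$ is bounded in $L^2(0,T;H^1_0(\Omega))$ and in $L^\infty(0,T;L^p(\Omega))$ for every $p<\infty$. I need a bound on $\partial_t u_n$ in some dual space. From \eqref{eq:pbn}, $\partial_t u_n=\operatorname{div}(a(t,x,u_n,\nabla u_n))+T_n(H(t,x,u_n,\nabla u_n))$; by \eqref{A2} the first summand is bounded in $L^2(0,T;H^{-1}(\Omega))$, and the second one is bounded in $L^1(Q_T)$ since $|\varphi(u_n)|\ge 2\delta e^\delta$ on $\{|u_n|\ge 1\}$ (giving control via \eqref{altrastimapriori0}) while on $\{|u_n|<1\}$ one uses $|T_n(H)|\le |\nabla u_n|^q+|f|$ together with the $L^2$ bound on $\nabla u_n$. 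A standard Aubin--Simon compactness argument then yields \eqref{Conv:1-exp} in $L^2(Q_T)$; interpolation with the $L^\infty(0,T;L^p(\Omega))$ bound, together with the a.e.\ convergence, upgrades it to strong convergence in every $L^p(Q_T)$. The weak convergence \eqref{Conv:3-exp} is then immediate by reflexivity.

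\textbf{Step 2 (a.e.\ convergence of gradients).} This is the main obstacle. I would follow the Boccardo--Murat--Dall'Aglio scheme adapted to parabolic problems. Fix $k>0$ and consider the test function $T_\eta(T_k(u_n)-T_k(u)_\mu)$, where $T_k(u)_\mu$ is the Landes time regularization of $T_k(u)$, which lies in the correct energy space and satisfies $\partial_\tau T_k(u)_\mu\in L^2$. Plugging this into the weak formulation \eqref{eq:sol-pbn}, using the coercivity \eqref{A1} for the diffusion and bounding the right-hand side via \eqref{H} plus the uniform integrability of $T_n(H)\varphi(u_n)$, then letting $n\to\infty$, $\mu\to\infty$ and $\eta\to 0$ in this order, produces
\[
\limsup_{n\to\infty}\iint_{Q_T}\bigl[a(t,x,u_n,\nabla T_k(u_n))-a(t,x,u_n,\nabla T_k(u))\bigr]\cdot\nabla(T_k(u_n)-T_k(u))\,dx\,dt\le 0.
\]
The strict monotonicity \eqref{A3} then forces $\nabla T_k(u_n)\to\nabla T_k(u)$ a.e.\ (and in $L^2(Q_T)^N$ by equi-integrability), proving \eqref{Conv:1.25-exp}. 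Since $\{|u_n|\le k\}\to\{|u|\le k\}$ a.e.\ and $\{|u|\le k\}\uparrow Q_T\setminus\{|u|=\infty\}$ as $k\to\infty$, with $|\{|u|=\infty\}|=0$ thanks to Step 1, a diagonal extraction yields \eqref{Conv:1.1-exp}.

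\textbf{Step 3 (passage to the limit in the nonlinearities).} Given the a.e.\ convergences, Carath\'eodory continuity of $a$ gives \eqref{Conv:1.3-exp} and then \eqref{Conv:1.45-exp} follows by the uniform $L^2$ bound from \eqref{A2}. For \eqref{Conv:1.2-exp}, I apply Vitali: a.e.\ convergence holds by continuity of $H$, and equi-integrability comes from \eqref{altrastimapriori0} via the estimate
\[
\int_{\{|u_n|\ge L\}}|T_n(H(t,x,u_n,\nabla u_n))|\,dx\,dt\le \frac{1}{2\delta L\,e^{\delta L^2}}\iint_{Q_T}|T_n(H(t,x,u_n,\nabla u_n))\,\varphi(u_n)|\,dx\,dt\le \frac{C}{2\delta L\,e^{\delta L^2}},
\]
together with the bound $|T_n(H)|\mathbf{1}_{\{|u_n|<L\}}\le L|\nabla u_n|^q+|f|$ and the $L^2(Q_T)$ bound on $\nabla u_n$ to handle the complementary set through the standard equi-integrability of $|\nabla u_n|^q$ ($q<2$) and of $f$.

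\textbf{Step 4 (strong convergence in $C([0,T];L^1(\Omega))$).} Finally, \eqref{Conv:1.3bis-exp} is deduced from the $L^1$-stability of the equation: subtracting the equations for $u_n$ and $u_m$, testing with (a smooth approximation of) $\operatorname{sign}(u_n-u_m)$, and using the monotonicity \eqref{A3} together with the equi-integrability of $T_n(H(t,x,u_n,\nabla u_n))-T_m(H(t,x,u_m,\nabla u_m))$ established in Step 3, yields that $\{u_n\}$ is Cauchy in $C([0,T];L^1(\Omega))$. Identifying the limit with $u$ closes the proof.
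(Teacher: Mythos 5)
Your Steps 1--3 reproduce, in substance, the paper's own argument: the $L^2(0,T;H^{-1}(\Omega))+L^1(Q_T)$ bound on $\partial_t u_n$ obtained by splitting $T_n(H)$ according to $\{|u_n|\ge 1\}$ (where $\varphi(u_n)$ is bounded away from zero and \eqref{altrastimapriori0} applies) and its complement, followed by Simon's compactness and interpolation, is exactly what the paper does, as is the Vitali/equi-integrability argument for \eqref{Conv:1.2-exp}. The one genuine difference is Step 2: where the paper deduces \eqref{Conv:1.25-exp} by verifying the hypotheses of \cite[Theorem 4.1]{BM} (weak $L^1$ convergence of the right-hand side via Dunford--Pettis), you re-derive that theorem by hand through the Landes time-regularization and the monotonicity \eqref{A3}. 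That is a legitimate, self-contained alternative -- it is essentially the proof of the cited result -- at the cost of having to justify the sign of the time-derivative term $\int_0^T\langle\partial_\tau u_n, T_\eta(T_k(u_n)-T_k(u)_\mu)\rangle\,d\tau$, which you gloss over but which is standard.

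Step 4, however, contains a genuine gap. You propose to show that $\{u_n\}$ is Cauchy in $C([0,T];L^1(\Omega))$ by testing the difference of the equations with an approximation of $\sgn(u_n-u_m)$ and invoking the monotonicity \eqref{A3}. But \eqref{A3} is monotonicity in $\xi$ \emph{at fixed} $s$; since $a=a(t,x,s,\xi)$ depends on $s$ and $u_n\ne u_m$, the diffusion term decomposes as
\begin{align*}
\bigl[a(t,x,u_n,\nabla u_n)-a(t,x,u_n,\nabla u_m)\bigr]\cdot\nabla(u_n-u_m)
+\bigl[a(t,x,u_n,\nabla u_m)-a(t,x,u_m,\nabla u_m)\bigr]\cdot\nabla(u_n-u_m),
\end{align*}
and only the first summand is signed. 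The second (cross) term is not controlled by the standing hypotheses: $a$ is merely Carath\'eodory in $s$ with the growth \eqref{A2}, so there is no modulus of continuity in $s$ allowing you to absorb it on the set $\{|u_n-u_m|<\eps\}$, and the usual $L^1$-contraction argument breaks down. This is precisely why the paper does not argue this way but instead adapts \cite[Proposition 6.4]{DGLS}, which exploits the convergences already established (in particular the strong convergence of the truncations and of the right-hand side) rather than a contraction between two approximate solutions. As written, your Step 4 does not prove \eqref{Conv:1.3bis-exp}.
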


\begin{proof}
Corollary \ref{Cor:1} implies that one can extract a subsequence   such that
\begin{equation*}%
  \N u_n \rightharpoonup \N u\quad \mbox{weakly in $L^2(Q_T)$.}
\end{equation*}
We want to show that
\begin{equation} \label{equi-int}
\mbox{the sequence $\big\{T_n(H(t,x,u_n,\N u_n))\big\}$ is equi-integrable in $Q_T$. }
\end{equation}
Indeed, for every measurable $E\subseteq Q_T$, we have that
\begin{align*}
\iint_{E} &\big|T_n\pare{H(t,x,u_n,\N u_n)}\big| \,dx\,dt \\
 &\le
\frac1{\varphi(k)} \iint_{\{|u_n|>k\}} \big|T_n(H(t,x,u_n,\N u_n))\varphi(u_n)\big| \,dx\,dt
+ \iint_{E\cap \{|u_n|\leq k\}} \big(|u_n|\,|\nabla u_n|^q +f(t,x)\big) \,dx\,dt \\
 &\le
\frac1{\varphi(k)} \underbracket[0.14ex]{\iint_{Q_T} \big|T_n(H(t,x,u_n,\N u_n))\varphi(u_n)\big| \,dx\,dt}_{\le C \mbox{ by \eqref{altrastimapriori0}}}
+ k |E|^\frac{2-q}{2}\underbracket[0.14ex]{\bigg[\iint_{Q_T} |\nabla u_n|^2 \,dx\,dt\bigg]^{q/2}}_ {\le C \mbox{ by \eqref{stimaprioriexp}}}\!\!\!+ \iint_E |f(t,x)|\,dx\,dt,
\end{align*}
where $\vp$ has been defined in \eqref{PhiPsi}.
Therefore, for any fixed $\eps>0$, we can fix $k$ large enough such that the first addend is smaller than $\eps/3$, and then fix $|E|$ small enough such that the remaining two terms are each smaller than $\eps/3$. Thus \eqref{equi-int} holds.
In particular, $T_n(H(t,x,u_n,\N u_n))$ is bounded in $L^1(Q_T)$.

Then the sequence $\set{\partial_tu_n}$ is bounded in $L^2(0,T;H^{-1}(\Omega))+L^1(Q_T)$, and we can apply  \cite[Corollary 4]{Simon} to conclude that $u_n\to u$ strongly in $L^2(Q_T)$. By Corollary \ref{Cor:1}, the convergence is also strong in $L^p(Q_T)$, for every $p<\infty$.
Using \eqref{equi-int}, it is possible to find a subsequence and a function $g\in L^1(Q_T)$ such that
\begin{equation}\label{deboleL1}
   T_n(H(t,x,u_n,\N u_n)) \rightharpoonup g \qquad\mbox{weakly in $L^1(Q_T)$.}
\end{equation}
Therefore one can apply \cite[Theorem 4.1]{BM} to conclude that \eqref{Conv:1.25-exp} holds. Hence, we can also assume that the gradients $\nabla u_n$ converge almost everywhere to $\nabla u$. From this, it  follows the convergence almost everywhere of $a(t,x,u_n,\nabla u_n)$ to $a(t,x,u,\nabla u)$ and of $T_n\pare{H(t,x,u_n,\N u_n)}$ to $H(t,x,u,\N u)$, which, together with \eqref{deboleL1}, implies \eqref{Conv:1.2-exp}. Finally, \eqref{Conv:1.45-exp} follows from   \eqref{Conv:1.3-exp} and the fact that $a(t,x,u_n,\nabla u_n)$ is bounded in $(L^2(Q_T))^N$. Taking into account the above convergences, we may follow the argument of   \cite[Proposition 6.4]{DGLS}, with minor modifications, to prove \eqref{Conv:1.3bis-exp}.
\end{proof}

Some  straightforward consequences of the pointwise convergence \eqref{Conv:1-exp} and the estimates \eqref{stimaprioriexp}, \eqref{altrastimapriori0} are
\begin{equation*}%
    \Psi(u)\in L^2(0,T; H_0^1(\Omega))\cap L^\infty(0,T;L^2(\Omega)),\qq
    H(t,x,u,\nabla u)\,\varphi(u)\in L^1(Q_T).%
\end{equation*}
As a consequence, $H(t,x,u,\nabla u)\,u\in L^1(Q_T)$.
Thus, on account of \eqref{ec:3.00}, condition \eqref{regolarexp} holds true.
Moreover it follows from Corollary \ref{Cor:1} that
\begin{equation*}%
u \in L^\infty(0,T; L^p(\Omega))\quad\mbox{for every $p<\infty$.}
\end{equation*}
Since, by \eqref{Conv:1.3bis-exp}, $u\in C([0,T]; L^1(\Omega))$, it follows easily using interpolation that
\begin{equation*}%
    u \in C([0,T]; L^p(\Omega))\quad\mbox{for every $p<\infty$.}
\end{equation*}

\subsection{End of the proof of Theorem \ref{teo:global-bdd}}
We wish to check the weak formulation in Definition \ref{defsol}. For $t\in[0,T]$, let $v(\tau,x) \in L^2(0,t; H^1_0(\Omega)) \cap L^\infty(Q_t)\cap C([0,t]; L^2(\Omega))$ be such that $\partial_\tau v\in L^2(0,t;H^{-1}(\Omega))$. Then
\begin{align*}
\intO u_n(t,x)v(t,x)\,dx-\intO T_n(u^0(x))v(0,x)\,dx&-\int_{0}^t\langle \partial_\tau v, u_n\rangle\,d\tau+\iint_{Q_t}a(\tau,x,u_n,\N u_n)\cdot\N v\,dx\,d\tau \\
&=\iint_{Q_t}T_n\pare{H(\tau,x,u_n,\N u_n)}v\,dx\,d\tau\,.
\end{align*}
Letting $n$ go to infinity in the above identity and using the convergences \eqref{Conv:1.3bis-exp}, \eqref{Conv:3-exp}, \eqref{Conv:1.45-exp} and \eqref{Conv:1.2-exp} we obtain the weak formulation \eqref{formdeb}.

Finally, we recover the energy formulation \eqref{energy} by taking $T_k(u_n)$ as test function in the approximating problem \eqref{eq:sol-pbn}. We can repeat similar computations as before to take, first, the limit on $n$ using \eqref{Conv:1.25-exp}, and then in $k$ thanks to the integrability properties of $u$.

\section{Proof of Theorem \ref{teo:small} - Existence with smallness conditions and unbounded data}\label{sec:small}

\subsection{A priori estimates}\label{sec:data-small}

In what follows, we will need to use the Gagliardo-Nirenberg interpolation inequality (see, for instance, \cite[Theorem 2.1]{DiBen}), that we recall below.

\begin{proposition}\label{teo:GN}
Let $\Omega\subset \mathbb{R}^N$ ($N\ge3$) be a bounded and open subset and $T$ a real positive number. If
\begin{equation*}\label{GN}
v\in L^{\infty}(0,T;L^{h}(\Omega))\cap L^{\eta}(0,T;W_0^{1,\eta}(\Omega)),\q\t{with}\q 1\le \eta<N \quad \text{and}\quad 1\le h\le\eta^*,
\end{equation*}
then
\begin{equation*}
v\in L^y(0,T;L^w(\Omega)),\q\t{with}\q \frac{Nh}{w}+\frac{ N(\eta-h)+\eta h }{y}=N,\q \eta\le y\le \infty,\q h\le w\le \eta^*.
\end{equation*}
Moreover, the following inequality holds:
\begin{equation*}\label{disGN}
\int_0^T \|v(t)\|_{L^w(\Omega)}^y\,dt\le c(N,\eta,h)\|v\|_{L^{\infty}(0,T;L^h(\Omega))}^{y-\eta}\int_0^T\|\N v(t)\|_{L^{\eta}(\Omega)}^{\eta}\,dt.
\end{equation*}
In particular, imposing $w=y$, this implies 
\begin{equation*}
v\in L^{\eta\frac{N+h}{N}}(Q_T).
\end{equation*}
\end{proposition}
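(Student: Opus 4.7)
The plan is to derive the inequality from the classical (stationary) Gagliardo--Nirenberg--Sobolev inequality applied at each fixed time $t\in(0,T)$, and then to integrate in time using the $L^\infty(0,T;L^h(\Omega))$ bound on $v$. Since $1\le \eta<N$ and $h\le \eta^*=\frac{N\eta}{N-\eta}$, for every $t$ we have $v(t)\in W_0^{1,\eta}(\Omega)\cap L^h(\Omega)$, so the standard spatial inequality gives, for an exponent $w\in[h,\eta^*]$,
\begin{equation*}
\|v(t)\|_{L^w(\Omega)}\le c(N,\eta,h)\,\|v(t)\|_{L^h(\Omega)}^{1-\theta}\,\|\nabla v(t)\|_{L^\eta(\Omega)}^{\theta},
\end{equation*}
where the interpolation parameter $\theta\in[0,1]$ is determined by the scaling identity $\tfrac{1}{w}=\tfrac{1-\theta}{h}+\theta\bigl(\tfrac{1}{\eta}-\tfrac{1}{N}\bigr)$.

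The next step is to raise the inequality to a power $y$ and integrate in $t$. I would choose $y$ so that $\theta y=\eta$, i.e.\ $\theta=\eta/y$, so that the gradient appears on the right with its natural exponent $\eta$. Then, bounding $\|v(t)\|_{L^h(\Omega)}$ by $\|v\|_{L^\infty(0,T;L^h(\Omega))}$ and pulling this constant out of the time integral, I obtain
\begin{equation*}
\int_0^T\|v(t)\|_{L^w(\Omega)}^y\,dt\le c(N,\eta,h)\,\|v\|_{L^\infty(0,T;L^h(\Omega))}^{(1-\theta)y}\int_0^T\|\nabla v(t)\|_{L^\eta(\Omega)}^{\eta}\,dt,
\end{equation*}
and $(1-\theta)y=y-\eta$, giving exactly the stated inequality. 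It remains to check that the constraint $\theta=\eta/y$ combined with the scaling relation rewrites as $\tfrac{Nh}{w}+\tfrac{N(\eta-h)+\eta h}{y}=N$; a direct computation (multiplying the scaling identity by $Nh$ and substituting $\theta=\eta/y$) confirms this. The admissible ranges $\eta\le y\le\infty$ and $h\le w\le \eta^*$ correspond precisely to $\theta\in[0,1]$.

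For the particular case $w=y$, I would substitute into the constraint to obtain $\frac{Nh+N(\eta-h)+\eta h}{y}=N$, which simplifies to $y=\eta\,\tfrac{N+h}{N}$, proving $v\in L^{\eta(N+h)/N}(Q_T)$. The only delicate point is the validity of the spatial GNS inequality at the endpoints (when $\theta=0$ the claim is trivial, when $\theta=1$ and $w=\eta^*$ it reduces to Sobolev's embedding), and the requirement $h\le \eta^*$ ensures the interpolation range is nonempty; these are accounted for by the assumptions of the proposition. There is no real obstacle beyond bookkeeping of exponents; the result is essentially a packaging of the elliptic Gagliardo--Nirenberg inequality into a parabolic estimate, as carried out in \cite{DiBen}.
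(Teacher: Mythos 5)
Your proof is correct: the paper does not prove this proposition at all but simply cites \cite[Theorem 2.1]{DiBen}, and your argument (slice in time, apply the stationary Gagliardo--Nirenberg inequality with $\theta=\eta/y$, pull out the $L^\infty(0,T;L^h(\Omega))$ norm, integrate) is precisely the standard derivation behind that citation, with the exponent bookkeeping checking out, including $(1-\theta)y=y-\eta$ and the identity $\frac{Nh}{w}+\frac{N(\eta-h)+\eta h}{y}=N$. The only implicit point worth a word is that the spatial inequality is applied to $v(t)\in W_0^{1,\eta}(\Omega)$, so extension by zero to $\R^N$ gives a constant independent of $\Omega$, and the endpoint cases $\theta=0,1$ are handled exactly as you say.
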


\begin{proposition}[The case $\frac{4}{N+2}\le q<2$ and $\si=\frac{Nq}{2-q}\ge 2$]\label{prop:ape}
Assume that
\begin{equation*}
u^0\in L^\si(\Omega),
\end{equation*}
\begin{equation*}
 f\in L^r(0,T;L^m(\Omega))\q\t{with}\q\frac{N}{m}+\frac{2}{r}= 1+\frac{2}{q}\,,
\end{equation*}
where
\[1\le r\le \sigma\,,\q\frac{N^2q}{N(q+2)-2(2-q)}\le m\le \sigma\,,\]
and that their norms in those spaces are small enough (depending on $N,q,r,|\Omega|$, see an explicit bound in \eqref{smallness} below).
Then the  following a priori estimate holds:
\begin{equation}\label{stimaenergia}
\max_{t\in[0,T]}\intO |u_n(t)|^\si\,dx+\iint_{Q_T}\!|\nabla u_n|^2 |u_n|^{\sigma-2}\,dx\,dt
+\iint_{Q_T}\!|\nabla u_n|^2 \,dx\,dt\le C(N,q, r, |\Omega|) \quad\mbox{$\forall\,n\in \mathbb N$.}
\end{equation}
Moreover,  $\set{\av{\N u_n}^q |u_n|^\sigma}$ is uniformly bounded in $L^1(Q_T)$ and consequently so is $\set{\av{\N u_n}^q |u_n|}$.
Finally,
\begin{equation*}
\t{$\{u_n\}$ is bounded in $L^{\sigma\frac{N+2}{N}}(Q_T)$.}
\end{equation*}

\end{proposition}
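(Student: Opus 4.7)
The plan is to test the approximate equation \eqref{eq:pbn} with $\varphi(u_n)=|u_n|^{\sigma-2}u_n$ via the chain-rule identity \eqref{ftestu'}. Using \eqref{A1}, \eqref{H} and \eqref{alfagamma1}, this produces, for a.e.\ $t\in(0,T)$,
\begin{equation*}
\frac{1}{\sigma}\frac{d}{dt}\intO |u_n|^\sigma\,dx + (\sigma-1)\intO |\N u_n|^2|u_n|^{\sigma-2}\,dx \le \intO|\N u_n|^q|u_n|^\sigma\,dx + \intO|f|\,|u_n|^{\sigma-1}\,dx.
\end{equation*}

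For the first term on the right hand side I would split
\[
|u_n|^\sigma|\N u_n|^q = \bigl(|\N u_n|^2|u_n|^{\sigma-2}\bigr)^{q/2}\cdot|u_n|^{\sigma-q(\sigma-2)/2}
\]
and apply Young's inequality with exponents $2/q$ and $2/(2-q)$. A direct computation exploiting the choice $\sigma=Nq/(2-q)$ shows that $[\sigma-q(\sigma-2)/2]\cdot\frac{2}{2-q}=\sigma+\frac{2q}{2-q}=\sigma\frac{N+2}{N}$, so
\begin{equation*}
\intO|\N u_n|^q|u_n|^\sigma\,dx \le \eps\intO|\N u_n|^2|u_n|^{\sigma-2}\,dx + C(\eps)\intO |u_n|^{\sigma(N+2)/N}\,dx.
\end{equation*}
Setting $v_n:=|u_n|^{\sigma/2}$, so that $|\N v_n|^2=(\sigma/2)^2|\N u_n|^2|u_n|^{\sigma-2}$, Proposition~\ref{teo:GN} with $\eta=h=2$ yields
\begin{equation*}
\iint_{Q_t}|u_n|^{\sigma(N+2)/N}\,dx\,d\tau \le c\,X_n(t)^{2/N}\,Y_n(t),
\end{equation*}
with $X_n(t):=\sup_{\tau\in[0,t]}\intO|u_n(\tau)|^\sigma\,dx$ and $Y_n(t):=\iint_{Q_t}|\N u_n|^2|u_n|^{\sigma-2}\,dx\,d\tau$. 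For the source contribution I would apply H\"older's inequality in $Q_t$ with the pair $(r,m)$ and then Proposition~\ref{teo:GN} again to $v_n$; the scaling identity $\frac{N}{m}+\frac{2}{r}=1+\frac{2}{q}$ is precisely what matches the interpolation exponents of $v_n$, producing an estimate of the form
\begin{equation*}
\iint_{Q_t}|f|\,|u_n|^{\sigma-1}\,dx\,d\tau \le c\,\|f\|_{L^r(0,t;L^m(\Omega))}\,X_n(t)^{a}\,Y_n(t)^{b}
\end{equation*}
with $a,b\ge0$, $a+b=(\sigma-1)/\sigma<1$.

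Integrating the differential inequality on $(0,t)$ and taking the supremum leads to
\begin{equation*}
X_n(t)+Y_n(t)\le C\|u^0\|_\sigma^\sigma + \bigl(\eps + C X_n(t)^{2/N}\bigr)Y_n(t) + C\|f\|_{L^rL^m}X_n(t)^a Y_n(t)^b.
\end{equation*}
The crucial (and most delicate) step is to close this scalar inequality uniformly in $n$: after fixing $\eps$ small, I would exploit the smallness of $\|u^0\|_\sigma$ and $\|f\|_{L^rL^m}$ and run a continuity argument on $t$ (using that $t\mapsto X_n(t)+Y_n(t)$ is continuous with initial value $\|T_n u^0\|_\sigma^\sigma$, which is small by assumption) to absorb the superlinear contributions and deduce a uniform bound $X_n(t)+Y_n(t)\le C(N,q,r,|\Omega|)$ on $[0,T]$. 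The $L^{\sigma(N+2)/N}(Q_T)$ estimate is then immediate from the Gagliardo-Nirenberg step, and the $L^1(Q_T)$ control of $|\N u_n|^q|u_n|^\sigma$ (and hence of $|\N u_n|^q|u_n|$, since $\sigma\ge 1$) is a by-product of the Young splitting. Finally, to obtain $\iint_{Q_T}|\N u_n|^2\,dx\,d\tau\le C$, I would combine $\iint_{\{|u_n|\ge 1\}}|\N u_n|^2\le Y_n(T)$ with the customary $H^1_0$-estimate for $T_1(u_n)$, obtained by testing \eqref{eq:pbn} with $T_1(u_n)$ and controlling the right hand side via the bounds already established.
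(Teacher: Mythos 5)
Your proposal is correct and follows essentially the same scheme as the paper: the same test function $|u_n|^{\sigma-1}\sgn(u_n)$, the same use of the structure relation $\sigma=\frac{Nq}{2-q}$ together with Gagliardo--Nirenberg for both the gradient term and the source term (your exponent identity $a+b=\frac{\sigma-1}{\sigma}$ and the role of $\frac Nm+\frac2r=1+\frac2q$ match the paper's computation), and the same smallness-plus-continuity-in-time argument to close the estimate. The only deviations are cosmetic: for $\iint|\N u_n|^q|u_n|^\sigma$ you use a pointwise Young splitting followed by space--time Gagliardo--Nirenberg, yielding $\eps Y_n+C(\eps)X_n^{2/N}Y_n$, where the paper uses a double H\"older (exponents $(\frac2q,\frac2{2-q})$ then $(\frac{2^*}2,\frac N2)$) plus Sobolev to get the purely multiplicative bound $cX_n^{q/\sigma}Y_n$; and for the $L^2(Q_T)$ gradient bound you split $\{|u_n|\ge1\}$ (controlled by $Y_n$ since $\sigma\ge2$) and test with $T_1(u_n)$ on the complement, where the paper simply re-tests with $u_n$ and checks $\frac4{2-q}\le\sigma\frac{N+2}N$ --- both routes work.
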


\begin{proof}

\noindent
We use $|u_n|^{\sigma-1} {\rm sign}(u_n)$ as test function in \eqref{eq:pbn} so that, recalling \eqref{eq:A} and \eqref{H},  we obtain
\begin{align*}
&\frac1\sigma \intO |u_n(t)|^\sigma\,dx+(\si-1)\iint_{Q_t} \av{\N u_n }^2\av{u_n}^{\si-2}\,dx\,d\tau\\
&\le\iint_{Q_t} \av{\N u_n}^q |u_n|^\sigma\,dx\,d\tau+\iint_{Q_t} |f| |u_n|^{\sigma-1}\,dx\,d\tau +\frac1\sigma \intO |u^0|^\sigma\,dx.
\end{align*}
To estimate the integral with  $|\nabla u_n|^q$, we apply H\"older's inequality twice, first with exponents $\Big(\frac{2}{q},\frac{2}{2-q}\Big)$ and then with  exponents $\pare{\frac{2^*}{2},\frac{N}{2}}$, and finally we use Sobolev's inequality:
\begin{align*}
\intO \av{\N u_n}^q |u_n|^\sigma\,dx&=\intO \av{\N u_n}^q |u_n|^{\frac{q(\si-2)}{2}}|u_n|^{\sigma-\frac{q(\si-2)}{2}}\,dx\\
&\le \bigg(\intO \av{\N u_n}^2 \av{u_n}^{\si-2}\,dx\bigg)^\frac{q}{2}
\bigg(\intO |u_n|^{\si+\frac{2q}{2-q}}\,dx\bigg)^\frac{2-q}{2}\\
&\le \bigg(\intO \av{\N u_n}^2 \av{u_n}^{\si-2}\,dx\bigg)^\frac{q}{2}
\underbracket[0.14ex]{\bigg(\intO |u_n|^{\frac{\sigma2^*}{2}}\,dx\bigg)^\frac{2-q}{2^*}}_{\le c(\intO |\nabla u_n|^2|u_n|^{\si-2}\,dx)^\frac{2-q}{2}}
\bigg(\intO \underbracket[0.14ex]{|u_n|^{\frac{Nq}{2-q}}}_{=|u_n|^\si}\,dx\bigg)^\frac{2-q}{N}\\
&\le c(N,q) \bigg(\intO \av{\N u_n}^2 \av{u_n}^{\si-2}\,dx\bigg)
\bigg(\intO |u_n|^{\si}\,dx\bigg)^\frac{q}{\si}\,.
\end{align*}
Integrating from $0$ to $t$, it yields
\begin{equation}\label{eq:unifL^1}
\iint_{Q_t} \av{\N u_n}^q |u_n|^\sigma\,dx\, d\tau\le c\sup_{\tau\in[0,t]}\pare{\intO |u_n(\tau)|^{\si}\,dx}^{\frac{q}{\si}}\left(\iint_{Q_t}\av{\N u_n}^2 \av{u_n}^{\si-2}\,dx\, d\tau\right).
\end{equation}

We have to estimate the integral with the forcing term. Having in mind \eqref{eq:f-0b}, we use H\"older's inequality to obtain that
\begin{align}
\iint_{Q_t} |f| |u_n|^{\si-1}\,dx\,d\tau&\le \normaF{f}{L^r(0,T;L^m(\Omega))}\normaF{|u_n|^{\si-1}}{L^{r'}(0,t;L^{m'}(\Omega))}\nonumber\\
&= \normaF{f}{L^r(0,T;L^m(\Omega))}\normaF{u_n}{L^{r'(\si-1)}(0,t;L^{m'(\si-1)}(\Omega))}^{\si-1}.\label{eq:est-f}
\end{align}
To estimate the last integral, we use the Gagliardo-Nirenberg interpolation inequality  (Proposition \ref{teo:GN}) with
 $v_n=|u_n|^{\si/2}$, deducing
\begin{align}
\normaF{u_n}{L^{\frac{a\si}2}(0,t;L^{\frac{b\si}2}(\Omega))}^\frac{\si}2&=
\normaF{v_n}{L^a(0,t;L^b(\Omega))}\le c \normaF{v_n}{L^\infty(0,t;L^2(\Omega))}^{\frac{a-2}a}\bigg(\iint_{Q_t}|\N v_n|^2\,dx\,d\tau\bigg)^{\frac1a}\nonumber\\
&\le c\sup_{\tau\in[0,t]}\bigg(\int_\Omega |u_n(\tau)|^\si\,dx\bigg)^{\frac{a-2}{2a}}\bigg(\iint_{Q_t}|\N u_n|^2 |u_n|^{\si-2}\,dx\,d\tau\bigg)^\frac1a\label{gaglianir0}
\end{align}
for all $a,b$ such that
\begin{equation}\label{gaglianir}
   2 \le b \le 2^*, \quad 2\le a <\infty, \quad \frac{N}b + \frac2a =\frac{N}2\,.
\end{equation}
In our case we must take $a$ and $b$ defined by
$$
   r'(\si-1) = \frac{a\si}{2},\quad m'(\si-1) = \frac{b\si}{2}\,.
$$
With this choice, it is easy to check that \eqref{gaglianir} is equivalent to the equality in assumption \eqref{eq:f-0b}.
Then \eqref{eq:est-f} and \eqref{gaglianir0} imply
\begin{align*}
  &  \iint_{Q_t} |f| |u_n|^{\si-1}\,dx\,d\tau\\
&\le
    c\normaF{f}{L^r(0,T;L^m(\Omega))}\sup_{\tau\in[0,t]}\bigg(\int_\Omega |u_n(\tau)|^\si\,dx\bigg)^{\frac{1}{r}- \frac1\si}
    \bigg(\iint_{Q_t}|\N u_n|^2 |u_n|^{\si-2}\,dx\,d\tau\bigg)^\frac1{r'}\\
    &\le \int_0^T \normaF{f(\tau)}{L^m(\Omega)}^r\,d\tau
    + c \sup_{\tau\in[0,t]}\bigg(\int_\Omega |u_n(\tau)|^\si\,dx\bigg)^{\left(\frac{1}{r}- \frac1\si\right)r'} \iint_{Q_t}|\N u_n|^2 |u_n|^{\si-2}\,dx\,d\tau
\end{align*}
by Young's inequality.

We gather the previous estimates:
\begin{align}
\frac1\si &\intO |u_n(t)|^\si\,dx+(\si-1)\iint_{Q_t} \av{\N u_n }^2 |u_n|^{\si-2} \,dx\,d\tau\label{eq:fin-est}\\
&\le c_1 \bigg[\sup_{\tau\in[0,t]}\bigg(\int_\Omega |u_n(\tau)|^\si\,dx\bigg)^{\frac q\si}+\sup_{\tau\in[0,t]}\bigg(\int_\Omega |u_n(\tau)|^\si\,dx\bigg)^{\left(\frac{1}{r}- \frac1\si\right)r'}\bigg]\iint_{Q_t}|\N u_n|^2 |u_n|^{\si-2}\,dx\,d\tau\nonumber\\
&\q +\frac1\si\intO |u^0|^\si\,dx+\int_0^T \normaF{f(\tau)}{L^m(\Omega)}^r\,d\tau.\nonumber
\end{align}

We now set $\de=\de(N,q,r, |\Omega|)>0$ such that
\begin{equation*}
c_1\Big(\de^{\frac q\si}+\de^{\left(\frac{1}{r}- \frac1\si\right)r'}\Big)\le \frac{\si-1}{2}\,,
\end{equation*}
and we assume that the data $u^0$ and $f$ verify the smallness condition
\begin{equation}\label{smallness}
\frac1\si\intO |u^0|^\si\,dx+\int_0^T \normaF{f(\tau)}{L^m(\Omega)}^r\,d\tau \le \frac{\de}{2\si}\,.
\end{equation}
We define
\[
T^*_n=\sup\bigg\{t\in[0,T]:\q \int_\Omega |u_n(\tau)|^\si\,dx\le \de\q \forall \tau\in [0,t]\bigg\}.
\]
Note that $T_n^*>0$ by the smallness condition \eqref{smallness} and the continuity of the function $t\mapsto \intO |u_n(t)|^\si\,dx$.
Then, if $t\le T_n^*$, the estimate \eqref{eq:fin-est} reads
\begin{align}\label{eq:fin-est-2}
&\frac1\sigma\intO |u_n(t)|^\si\,dx+\frac{\si-1}{2}\iint_{Q_t} \av{\N u_n }^2\av{u_n}^{\si-2} \,dx\,d\tau \le \frac1\si\intO |u^0|^\si\,dx+\int_0^T \normaF{f(\tau)}{L^m(\Omega)}^r\,d\tau  \le \frac{\de}{2\si}\,.
\end{align}
We want to show that $T_n^*=T$. Let us suppose that $T_n^*<T$.
Then, if we take $t= T_n^*$ in \eqref{eq:fin-est-2}, we obtain a contradiction:
\begin{align*}
\frac{\de}\si=\frac1\si\intO |u_n(T_n^*)|^\si\,dx&\le \frac1\si\intO |u^0|^\si\,dx+\int_0^T \normaF{f(\tau)}{L^m(\Omega)}^r\,d\tau \le \frac{\de}{2\si}\,.
\end{align*}
Therefore $T_n^*=T$ for all $n$, and \eqref{eq:fin-est-2} provides an estimate for the first two integrals in
\eqref{stimaenergia}.\\
The previous computations allow us to say that
\[
\t{$\set{u_n}$ is bounded in $L^{\si\frac{N+2}{N}}(Q_T)$,}
\]
 thanks to an application of  Gagliardo-Nirenberg's inequality to   $|u_n|^{\si/2}$ in the spaces
$L^\infty(0,T;L^2(\Omega))\cap L^2(0,T;H_0^1(\Omega))$.

To estimate the last integral in \eqref{stimaenergia}, we take $u_n$ as test function in \eqref{eq:pbn}, obtaining
\begin{align*}
\frac{1}{2}\intO u_n^2(t)\,dx+\iint_{Q_t} \av{\N u_n }^2 \,dx\,d\tau
&\le\iint_{Q_t} \av{\N u_n}^q u_n^2\,dx \,d\tau+\iint_{Q_t} |f|\, |u_n|\,dx\,d\tau+\frac{1}{2}\intO (u^0)^2\,dx.
\end{align*}
Since
\begin{align*}
\iint_{Q_t}\av{\N u_n}^q u_n^2\,dx \,d\tau&\le \frac12\iint_{Q_t} \av{\N u_n}^2\,dx \,d\tau+c\iint_{Q_t} |u_n|^{\frac{4}{2-q}}\,dx \,d\tau
\end{align*}
by Young's inequality,
we just need to check that
\[
\frac{4}{2-q}\le \si\frac{N+2}{N},
\]
which is true in our range of $q$.
We now apply  H\"older's inequality to the term involving the source term, obtaining
\begin{align*}
\iint_{Q_t} \av{f} \av{u_n}\,dx\,d\tau\le \norm{f}_{L^{r}(0,T;L^{m}(\Omega))}\norm{u_n}_{L^{r'}(0,T;L^{m'}(\Omega))},
\end{align*}
which is bounded by \eqref{eq:f-0} and since  $m'\le m'(\si-1)$, $r'\le r'(\si-1)$ being $\si \ge2$ (see \eqref{eq:est-f}).

Finally, going back to \eqref{eq:unifL^1},  we obtain the desired $L^1$ estimate of $\{|\nabla u_n|^q|u_n|^\sigma\}$.
\end{proof}

\begin{remark}[On the level set function]\label{rmk:Gk}
\normalfont It is worth observing that we cannot deal with problem \eqref{eq:pb} as in \cite{M,MO}, i.e., we cannot avoid the smallness assumption on the norm of the data by using test functions which involve the level set function $G_k(u)$ in \eqref{tk1}. Indeed, if we take $G_k(u_n)^{\si-1}\t{sign}(u_n)$ as test function in \eqref{eq:pbn}, the estimate on the superlinear term becomes
\begin{align*}
&\iint_{Q_t}  |\nabla  u_n|^q\av{u_n}  \av{G_k(u_n)}^{\si-1} \,dx\,d\tau\\
&= \iint_{Q_t}  |\nabla  G_k(u_n)|^q\av{ G_k(u_n)}^{\si} \,dx\,d\tau+k\iint_{Q_t}  |\nabla  u_n|^q  \av{G_k(u_n)}^{\si-1} \,dx\,d\tau ,
\end{align*}
and we have no control on the last integral.
\end{remark}

\begin{proposition}[The case $\frac{2}{N+1}<q<\frac4{N+2}$ and $\si=\frac{Nq}{2-q}$]\label{prop:ape2}
Assume that \eqref{eq:A} and \eqref{H} are in force, and that the data are as in \eqref{eq:f-0}, i.e.
\begin{equation*}
u^0\in L^\si(\Omega),
\end{equation*}
\begin{equation*}
 f\in L^r(0,T;L^m(\Omega))\q\t{with}\q\frac{N}{m}+\frac{2}{r}= 1+\frac{2}{q}\,,
 \q 1\le r\le \sigma\,,
 \q\frac{N^2q}{N(q+2)-2(2-q)}\le m\le \sigma\,,
\end{equation*}
and that their norms in those spaces are small enough (depending on $N,q,r,|\Omega|$, see explicit bound \eqref{smallness0} below).
Then the  following a priori estimate holds:
\begin{equation*}\label{stimaenergia2}
\max_{t\in[0,T]}\intO |u_n(t)|^\si\,dx+\iint_{Q_T}\frac{|\nabla u_n|^2}{(1+|u_n|)^{2-\sigma}}\,dx\,d\tau
\le C(N,q, r, |\Omega|) \quad\mbox{$\forall\,n\in \mathbb N$.}
\end{equation*}
Moreover,  $\set{\av{\N u_n}^q |u_n|^\sigma}$ is uniformly bounded in $L^1(Q_T)$, wherewith so is $\set{\av{\N u_n}^q |u_n|}$. Finally,
\begin{equation}\label{cor-GN2}
\t{$\{u_n\}$ is bounded in $L^{\sigma\frac{N+2}{N}}(Q_T)$.}
\end{equation}
\end{proposition}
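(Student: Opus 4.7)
Since $1<\sigma<2$, the test function $|u_n|^{\sigma-1}\sgn(u_n)$ of Proposition~\ref{prop:ape} is no longer in $H^1_0(\Omega)$, so the plan is to replace it by the $C^1$ regularization $\varphi(s):=[(1+|s|)^{\sigma-1}-1]\sgn(s)$, whose primitive $\Phi(s)=\tfrac{1}{\sigma}[(1+|s|)^\sigma-1]-|s|$ is nonnegative and verifies $c_\sigma|s|^\sigma-c\le\Phi(s)\le C|s|^\sigma$. Inserting $\varphi$ into \eqref{ftestu'} and using \eqref{A1}, \eqref{H} and \eqref{alfagamma1} gives the master inequality
\begin{equation*}
\int_\Omega\Phi(u_n(t))\,dx+(\sigma-1)\iint_{Q_t}\frac{|\nabla u_n|^2}{(1+|u_n|)^{2-\sigma}}\,dx\,d\tau\le\iint_{Q_t}\bigl(|u_n||\nabla u_n|^q+|f|\bigr)|\varphi(u_n)|\,dx\,d\tau+\int_\Omega\Phi(u^0)\,dx.
\end{equation*}
The subadditivity inequality $(1+x)^{\sigma-1}\le 1+x^{\sigma-1}$ for $0<\sigma-1<1$ gives $|\varphi(s)|\le|s|^{\sigma-1}$, hence $|s||\varphi(s)|\le|s|^\sigma$. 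Writing $A_n(t):=\sup_{\tau\le t}\int_\Omega|u_n(\tau)|^\sigma\,dx$ and $B_n(t):=\iint_{Q_t}|\nabla u_n|^2/(1+|u_n|)^{2-\sigma}\,dx\,d\tau$, the left-hand side dominates $c_\sigma A_n(t)+(\sigma-1)B_n(t)$ up to a constant.

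The main obstacle is that applying Proposition~\ref{teo:GN} directly to the shifted function $(1+|u_n|)^{\sigma/2}-1$ produces a coefficient $(1+A_n)^{(2-q)/N}$ which is bounded away from zero when $A_n$ is small, and therefore rules out the bootstrap of Proposition~\ref{prop:ape}. To recover the sharp decay $A_n^{(2-q)/N}$, I would split the superlinear integral according to $\{|u_n|\le 1\}\cup\{|u_n|>1\}$. On the small set, $(1+|u_n|)^{\sigma-2}\ge 2^{\sigma-2}$ and H\"older's inequality with exponents $(2/q,2/(2-q))$ bound the contribution by $C B_n^{q/2}\le \eps B_n+c(\eps)$. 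On the large set, the pointwise estimate $|u_n|^{\sigma-2}\le 2^{2-\sigma}(1+|u_n|)^{\sigma-2}$ lets me mimic the H\"older step of \eqref{eq:unifL^1}, which by the identity $\sigma+\tfrac{2q}{2-q}=\sigma\tfrac{N+2}{N}$ reduces the matter to controlling $\iint_{\{|u_n|>1\}}|u_n|^{\sigma(N+2)/N}$. The crucial device is to apply Proposition~\ref{teo:GN} not to $|u_n|^{\sigma/2}$ (which fails to lie in $H^1_0$) but to the \emph{truncated} function $w_n:=(|u_n|^{\sigma/2}-1)^+\in L^\infty(0,T;L^2(\Omega))\cap L^2(0,T;H^1_0(\Omega))$, for which $\|w_n(t)\|_2^2\le A_n(t)$ and $\iint|\nabla w_n|^2\le CB_n$. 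Since $|u_n|^{\sigma/2}=1+w_n$ on $\{|u_n|>1\}$, Gagliardo--Nirenberg yields $\iint_{\{|u_n|>1\}}|u_n|^{\sigma(N+2)/N}\le C|Q_T|+CA_n^{2/N}B_n$, and the full superlinear contribution is bounded by $\eps B_n+c(\eps)+CA_n^{(2-q)/N}B_n$.

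The source term is handled in parallel with \eqref{eq:est-f}: using $|\varphi(u_n)|\le|u_n|^{\sigma-1}$ and H\"older in space--time, one gets $\iint|f||\varphi(u_n)|\le \|f\|_{L^rL^m}\|u_n\|_{L^{r'(\sigma-1)}L^{m'(\sigma-1)}}^{\sigma-1}$; the Lebesgue exponents verify \eqref{gaglianir}, so applying Proposition~\ref{teo:GN} to $\tilde w_n:=(1+|u_n|)^{\sigma/2}-1$ followed by a weighted Young inequality $(r,r')$ produces $\iint|f||\varphi(u_n)|\le c(\eps)\int_0^T\|f(\tau)\|_m^r\,d\tau+\eps(1+A_n(t))^{r'\alpha_1}B_n(t)$ for some $\alpha_1=\alpha_1(N,q,r)$; here smallness of $\eps$ makes the residual $B_n$-coefficient arbitrarily small irrespective of $A_n$. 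Gathering all estimates and choosing $\eps$ small gives an inequality of the form
\begin{equation*}
c_\sigma A_n(t)+\tfrac{\sigma-1}{2}B_n(t)\le CA_n(t)^{(2-q)/N}B_n(t)+C\|u^0\|_{L^\sigma(\Omega)}^\sigma+C\int_0^T\|f(\tau)\|_{L^m(\Omega)}^r\,d\tau+c.
\end{equation*}
Imposing the smallness condition \eqref{smallness0} in the form $\|u^0\|_\sigma^\sigma+\int_0^T\|f(\tau)\|_m^r\,d\tau\le\delta_0/2$ with $\delta_0$ chosen so that $C\delta_0^{(2-q)/N}\le(\sigma-1)/4$, a bootstrap on $T_n^*:=\sup\{t\in[0,T]:A_n(t)\le\delta_0\}$ identical in structure to \eqref{eq:fin-est-2}--\eqref{smallness} forces $T_n^*=T$, yielding the a priori bounds. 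The $L^1(Q_T)$ estimate for $|u_n||\nabla u_n|^q$ then follows from $|u_n|\le 1+|u_n|^\sigma$ and the same dichotomy, while a final Gagliardo--Nirenberg applied to $\tilde w_n$ gives \eqref{cor-GN2}.
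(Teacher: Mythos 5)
Your skeleton is the same as the paper's: a Lipschitz regularization of $|s|^{\sigma-1}\sgn(s)$ as test function, H\"older plus Sobolev/Gagliardo--Nirenberg applied to a power-type transform of $u_n$, and a continuity bootstrap on $T_n^*$. You also correctly identify the real difficulty, namely that the regularized functions behave like $s^2$, $s$, $s$ near the origin instead of $|s|^\sigma$, $|s|^{\sigma-1}$, $|s|^{\sigma/2}$. The gap is in how you dispose of the residuals created by this mismatch. By freezing the regularization parameter at $1$ and splitting at the level set $\{|u_n|=1\}$, you bound the low-level part of the superlinear term by $CB_n^{q/2}\le \varepsilon B_n + c(\varepsilon)$, and the high-level part through $\iint_{\{|u_n|>1\}}|u_n|^{\sigma(N+2)/N}\le C|Q_T|+CA_n^{2/N}B_n$, whose $C|Q_T|$ piece produces another $CB_n^{q/2}$ after the outer H\"older step. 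Both leave an additive constant of order one, independent of the size of $u^0$ and $f$, and your final inequality honestly records this as ``$+c$''. With that term present the continuity argument cannot close: if $T_n^*<T$, evaluating at $t=T_n^*$ gives $c_\sigma\delta_0\le C\delta_0/2+c$, which is no contradiction, because $\delta_0$ must be chosen \emph{small} (so that $C\delta_0^{(2-q)/N}\le(\sigma-1)/4$) while $c$ is a fixed constant depending on $N$, $q$, $|\Omega|$, $T$. A second instance of the same problem is hidden on the left-hand side: $\int_\Omega\Phi(u_n(t))\,dx$ controls $\int_\Omega|u_n(t)|^\sigma dx$ only up to another additive $C|\Omega|$ (since $\Phi(s)\sim s^2\ll|s|^\sigma$ near $0$), so the bootstrap quantity $A_n$ is not even dominated by the coercive term at the critical time.

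The paper removes exactly this obstruction by keeping $\eps$ free in $\varphi(s)=[(\eps+|s|)^{\sigma-1}-\eps^{\sigma-1}]\sgn(s)$ and bootstrapping on $\int_\Omega\Phi(u_n)\,dx$ itself: Lemma \ref{lemmafunzioni} gives the two-sided comparison $\Phi\simeq\Psi^2$ with $\Psi(s)=(\eps+|s|)^{\sigma/2}-\eps^{\sigma/2}$, which is already an $H_0^1$ function of $u_n$ with $\iint|\nabla\Psi(u_n)|^2=\tfrac{\sigma^2}{4}B_n$, so no truncation is needed, and the residue of the near-zero mismatch appears as $c_1\eps^q B_n$ --- multiplicative in $B_n$, not additive --- which is absorbed into $(\sigma-1)B_n$ by fixing $\eps$ with $c_1\eps^q<(\sigma-1)/4$ before choosing $\delta$. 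Your scheme could in principle be repaired with $\varepsilon=1$ fixed, but only by extracting a positive power of $A_n$ from \emph{every} residual (on $\{|u_n|\le1\}$ keep the factor $|u_n|\,|\varphi(u_n)|\le C|u_n|^2$ inside the H\"older step and use $\tfrac{4}{2-q}\ge\sigma$; replace $C|Q_T|$ by $|\{|u_n|>1\}|\le T A_n$); as written, the smallness mechanism fails.
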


We remark that, since in this case $1<\sigma<2$,  it is not possible to take $|u_n|^{\sigma -1}{\rm sign}(u_n)$ as test function on the approximate problems. We will have to consider  a regularization of this test function. We therefore define the following functions, which depend on a positive parameter $\eps$:
\begin{subequations}\label{funzioni1}
\begin{align}
\varphi(s) &  =\big[(\eps+|s|)^{\si-1}-\eps^{\si-1}\big]{\rm sign}(s),\label{funzioni1a}\\
\Phi(s)&= \int_0^s
\varphi(\eta)\,d\eta = \frac1\si\big[(\eps+|s|)^\si-\eps^\si\big]-\eps^{\si-1}|s|,\qquad
\Psi(s)=(\eps+|s|)^{\si/2}-\eps^{\si/2}.\label{funzioni1b}
\end{align}
\end{subequations}

\begin{lemma}\label{lemmafunzioni} For $1<\sigma<2$ and $\eps>0$, let $\varphi$, $\Phi$ and $\Psi$ be the functions defined in  \eqref{funzioni1}. Then, for every $s\in\R$:
\begin{align}
\label{disug1}
\frac{2(\si-1)}{\si^2}\Psi(s)^2&\stackrel{\mbox{\rm (a)}}{\le}\Phi(s)\stackrel{\mbox{\rm (b)}}\le \Psi(s)^2;\\
\label{disug2}
|\varphi(s)|&\le\Psi(s)^{\frac{2(\si-1)}\si};\\
\label{disug3}
s\varphi(s)&\le \frac2\si \Psi(s)^{2}.
\end{align}

\end{lemma}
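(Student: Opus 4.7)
The plan is to use the evenness of $\Phi$ and $\Psi$ and the oddness of $\varphi$ to reduce every claim to the range $s\ge0$, after which I perform the rescaling $r=(\eps+s)/\eps\ge 1$ and divide each inequality by $\eps^\si$. This turns \eqref{disug1} and \eqref{disug3} into one-variable inequalities on $[1,\infty)$ for which the standard trick of checking the value and first derivative at $r=1$ and then the sign of the second derivative will work. The inequality \eqref{disug2} I would handle by a direct concavity argument, independently of this reduction.

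For \eqref{disug2}, the clean observation is that with $a=(\eps+s)^{\si/2}$, $b=\eps^{\si/2}$ and $\alpha=2(\si-1)/\si$ one has $\varphi(s)=a^\alpha-b^\alpha$ and $\Psi(s)^{2(\si-1)/\si}=(a-b)^\alpha$. The assumption $1<\si<2$ gives exactly $\alpha\in(0,1)$, and the standard concave power inequality
\[
a^\alpha-b^\alpha\le(a-b)^\alpha\qquad(a\ge b\ge 0,\ 0<\alpha\le 1)
\]
(checked by differentiating $(a-b)^\alpha-a^\alpha+b^\alpha$ in $a$) delivers \eqref{disug2} in one line.

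For \eqref{disug1} and \eqref{disug3} the same scheme applies in each instance. After the substitution $r=(\eps+s)/\eps$ and division by $\eps^\si$, each inequality becomes $F(r)\le 0$ (or $\ge 0$) on $[1,\infty)$ for an explicit $F$. I would verify in every case that $F(1)=0$ and $F'(1)=0$, so the inequality is sharp to first order at $r=1$ and the sign is dictated by the second derivative. Taking \eqref{disug3} as a representative example, a direct computation shows
\[
F(r)=(\si-2)(r^\si+1)+4r^{\si/2}-\si r-\si r^{\si-1},\qquad F(1)=F'(1)=0,
\]
and
\[
F''(r)=\si(\si-2)\,r^{\si-3}\bigl[(\si-1)(r-1)+r^{1-\si/2}\bigr]\le 0\qquad\text{for }r\ge 1,
\]
since $\si-2<0$ while the bracket is positive. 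Hence $F'$ is non-increasing, $F'(r)\le F'(1)=0$, so $F$ is non-increasing and $F(r)\le F(0)=0$; undoing the normalization gives \eqref{disug3}. The analysis of \eqref{disug1}(a) and \eqref{disug1}(b) follows the exact same template: each of the resulting $F''$ factors through a quantity of the form $r^{\si-2}-r^{\si/2-2}$ whose sign on $[1,\infty)$ is read off immediately from $1<\si<2$.

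The only real obstacle is bookkeeping: the three normalized inequalities have very similar structure but different coefficient combinations, all carefully arranged so that the first two derivatives vanish at $r=1$. Nothing subtler than elementary calculus is required once one recognises this common sharpness at $r=1$; in particular no use is made of $\eps>0$ beyond ensuring the rescaling is well defined and avoiding the degeneracy of $t^{\si-1}$ at the origin.
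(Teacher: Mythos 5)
Your proposal is correct. For \eqref{disug1} and \eqref{disug3} it is essentially the paper's argument --- reduce to $s\ge0$ by parity, check that the difference of the two sides vanishes to first order at the left endpoint, and conclude from the sign of the second derivative --- dressed up in the normalization $r=(\eps+s)/\eps$; I verified your representative computation for \eqref{disug3}, including the factorization $F''(r)=\si(\si-2)r^{\si-3}\bigl[(\si-1)(r-1)+r^{1-\si/2}\bigr]$, and the claim that the second derivatives for \eqref{disug1} reduce to the sign of $r^{\si-2}-r^{\si/2-2}$ on $[1,\infty)$. Where you genuinely depart from the paper is \eqref{disug2}: the paper proves it by showing $h(s)=\Psi(s)^{2(\si-1)/\si}-\varphi(s)$ is nondecreasing from $h(0)=0$, whereas you observe that with $a=(\eps+s)^{\si/2}$, $b=\eps^{\si/2}$, $\alpha=2(\si-1)/\si\in(0,1)$ the inequality is exactly the subadditivity $a^\alpha-b^\alpha\le(a-b)^\alpha$ of a concave power. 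This is cleaner and arguably more illuminating, since it isolates the only structural fact being used; the paper's derivative check buys nothing extra here. One typo to fix: at the end of the \eqref{disug3} computation you write $F(r)\le F(0)=0$, but the base point of your monotonicity argument is $r=1$ (indeed $F(0)=\si-2\ne0$), so this should read $F(r)\le F(1)=0$.
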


\begin{proof}
For symmetry, it is enough to prove the inequalities for $s\ge 0$.
\begin{itemize}
\item {\eqref{disug1}(a):} If we define
$$
   g(s) = \Phi(s) - \frac{2(\si-1)}{\si^2} \Psi(s)^2,
$$
then it is easy to check that $g(0)=g'(0)=0$, and that 
$$
   g''(s) = \frac{(\si-1)(2-\si)}{\si}(\eps+s)^\frac{\si-4}2\, \Psi(s) \ge 0,
$$
therefore $g(s)\ge 0$ for all $s\in \R$.

\item {\eqref{disug1}(b):} Very similar to the previous one.

\item {\eqref{disug2}:} We wish to check that
$$
  h(s):=\Psi(s)^{\frac{2(\si-1)}\si}  - \varphi(s) \ge 0.
$$
Then $h(0)=0$ and, for all $\s\ge0$,
$$
    h'(s)= (\si-1)(\eps+s)^{\frac{\sigma-2}{2}}\Big\{\big[(\eps+s)^{\si/2}-\eps^{\si/2}\big]^{\frac{\si-2}\si}- (\eps+s)^\frac{\si-2}{2}\Big\}\,.
$$
Then $h'(s)\ge0$ if and only if
$$
  \big[(\eps+s)^{\si/2}-\eps^{\si/2}\big]^{\frac{\si-2}\si}\ge (\eps+s)^\frac{\si-2}{2}\,,
$$
that is (since $\si<2$),
$$
   (\eps+s)^{\si/2}-\eps^{\si/2}\le (\eps+s)^{\si/2}\,,
$$
which is true for every $s\ge 0$.

\item {\eqref{disug3}:} In this case, if we define
$$
   k(s) = \frac2\si \Psi(s)^2- s \varphi(s),
$$
then
$$
   k(0)=k'(0)=0,\qquad k''(s) = (2-\si)\Big[\eps^{\sigma/2}(\eps+s)^\frac{\si-4}2 + (\si-1)s(\eps+s)^{\si-3}\Big]\ge 0 \quad \forall\,s\ge0.
$$
\end{itemize}
\end{proof}

\begin{proof}[Proof of Proposition \ref{prop:ape2}]
The proof follows the same scheme as that of Proposition \ref{prop:ape}, but with powers of $u_n$ replaced by the functions defined in \eqref{funzioni1}.
We take $\vp(u_n)$ as test function in \eqref{eq:pbn}, obtaining
\[
\intO \Phi(u_n(t))\,dx+(\si-1)\iint_{Q_t} \frac{\av{\N u_n }^2}{\pare{\eps+\av{u_n}}^{2-\si}}
\le\underbracket[0.14ex]{\iint_{Q_t} \av{\N u_n}^q u_n\vp(u_n)}_{\fbox{A}}+\underbracket[0.14ex]{\iint_{Q_t} |f|\, |\vp(u_n)|}_{\fbox{B}}+\intO \Phi(u^0)\,dx,
\]
thanks also to \eqref{eq:A} and \eqref{H}.\\
To estimate the integral \fbox{A}, we first apply inequality \eqref{disug3} and then H\"older's inequality with exponents $\pare{\frac{2}{q},\frac{2}{2-q}}$, obtaining:
\begin{align*}
\intO \av{\N u_n}^q u_n\vp(u_n)\,dx
&\le \frac2\si\intO \av{\N u_n}^q (\Psi(u_n))^{2}\,dx
= c\intO \frac{\av{\N u_n}^q}{ \pare{\eps+\av{u_n}}^{\frac{q(2-\si)}{2}}}(\Psi(u_n))^{2}\pare{\eps+\av{u_n}}^{\frac{q(2-\si)}{2}}\,dx\\
&\le c\pare{\intO \frac{\av{\N u_n}^2 }{\pare{\eps+\av{u_n}}^{2-\si}}\,dx}^\frac{q}{2}
\pare{\intO \Psi(u_n)^\frac{4}{2-q}\pare{\eps+\av{u_n}}^{\frac{q(2-\si)}{2-q}}\,dx}^\frac{2-q}{2}\,.
\end{align*}
We focus on the last integral in the inequality above: recalling the definition of $\si$ and using H\"older's inequality with exponents   $\pare{\frac{2^*}{2},\frac{N}{2}}$ and then Sobolev's inequality, we obtain
\begin{align*}
\bigg(\intO \Psi(u_n)^\frac{4}{2-q}(\eps+\av{u_n})^{\frac{q(2-\si)}{2-q}}\,dx\bigg)^\frac{2-q}{2}
&= \pare{\intO \Psi(u_n)^2\Psi(u_n)^\frac{2\si}{N}\pare{\eps+\av{u_n}}^{\frac{\si(2-\si)}{N}}\,dx}^\frac{2-q}{2}\\
&\le \pare{\intO  \Psi(u_n)^{2^*}\,dx}^\frac{2-q}{2^*}
\pare{\intO  \Psi(u_n)^\si \pare{\eps+\av{u_n}}^{\frac{\si(2-\si)}{2}}\,dx}^\frac{2-q}{N}\\
&\le  c\pare{\intO  \frac{|\nabla u_n|^2}{\pare{\eps+\av{u_n}}^{2-\si}}\,dx}^\frac{2-q}{2}
\pare{\intO  \Psi(u_n)^\si \pare{\eps+\av{u_n}}^{\frac{\si(2-\si)}{2}}\,dx}^\frac{q}{\si}.
\end{align*}
Moreover, since $(\eps+u_n)^{\si/2}= \Psi(u_n) +\eps^{\si/2}$,
\begin{align*}
&\bigg(\intO  \Psi(u_n)^\si (\eps+\av{u_n})^{\frac{\si(2-\si)}{2}}\,dx\bigg)^\frac{q}{\si}
= \bigg(\intO  \Psi(u_n)^\si (\Psi(u_n)+\eps^{\si/2})^{2-\si}\,dx\bigg)^\frac{q}{\si}\\
&\le c\bigg(\intO  \Psi(u_n)^\si \big(\Psi(u_n)^{2-\si}+\eps^{\frac{\si(2-\si)}2}\big)\,dx\bigg)^\frac{q}{\si}
\stackrel{\t{[Young]}}\le c\bigg(\intO  \big(\Psi(u_n)^2 +\eps^{\si}\big)\,dx\bigg)^\frac{q}{\si}\\
&\stackrel{\eqref{disug1}}\le
   c\bigg(\bigg[\intO  \Phi(u_n)\,dx\bigg]^\frac{q}{\si} +|\Omega|^{\frac{2-q}N}\eps^{q}\bigg)
   \le c\bigg(\bigg[\intO  \Phi(u_n)\,dx\bigg]^\frac{q}{\si} +\eps^{q}\bigg).
\end{align*}
Putting the last estimates together and integrating, we obtain
\begin{equation*}
\fbox{A} \le c \bigg(\iint_{Q_t} \frac{\av{\N u_n}^2}{ \pare{\eps+\av{u_n}}^{2-\si}}\,dx\,d\tau\bigg)\bigg(\sup_{\tau\in[0,t]}\bigg[\intO  \Phi(u_n(\tau))\,dx\bigg]^\frac{q}{\si} +\eps^{q}\bigg).
\end{equation*}
We only have to estimate the term \fbox{B}. The assumption on $f$ and inequalities \eqref{disug1} and \eqref{disug2} imply
\begin{align*}
\fbox{B}&\le \normaF{f}{L^r(0,t;L^m(\Omega))}\normaF{\vp(u_n)}{L^{r'}(0,t;L^{m'}(\Omega))} \le
\normaF{f}{L^r(0,T;L^m(\Omega))}\normaF{\Psi(u_n)}{L^{\frac{2r'}{\si'}}(0,t;L^{\frac{2m'}{\si'}}(\Omega))}^{\frac2{\si'}}
.
\end{align*}
Therefore, by  applying the Gagliardo-Nirenberg inequality to the function $\Psi(u_n)$ and proceeding as in the proof of Proposition \ref{prop:ape}, we obtain
$$
    \fbox{B}\le \int_0^T \normaF{f(\tau)}{L^m(\Omega)}^r\,d\tau
    + c \sup_{\tau\in[0,t]}\bigg(\int_\Omega \Phi(u_n(\tau))\,dx\bigg)^{\left(\frac{1}{r}- \frac1\si\right)r'} \iint_{Q_t}\frac{|\N u_n|^2}{(\eps+|u_n|)^{2-\si}}\,dx\,d\tau.
$$
Gathering the previous estimates, we arrive at
\begin{align}
&\intO \Phi(u_n(t))\,dx+(\si-1)\iint_{Q_t} \av{\N u_n}^2 (\eps+|u_n|)^{\si-2} \,dx\,d\tau\label{eq:fin-est0}\\
&\le c_1 \bra{\eps^q + \sup_{\tau\in[0,t]}\bigg(\int_\Omega \Phi(u_n(\tau))\,dx\bigg)^{\frac q\si}+\sup_{\tau\in[0,t]}\bigg(\int_\Omega \Phi(u_n(\tau))\,dx\bigg)^{\left(\frac{1}{r}- \frac1\si\right)r'}}\iint_{Q_t}\frac{|\N u_n|^2}{(\eps+|u_n|)^{2-\si}}\,dx\,d\tau\nonumber\\
&\q +\intO \Phi(u^0)\,dx+\int_0^T \normaF{f(\tau)}{L^m(\Omega)}^r\,d\tau.\nonumber
\end{align}
We now fix $\eps\in(0,1)$ and $\de>0$ such that
\begin{equation*}
c_1\eps^q<\frac{\si-1}{4}\,,\qquad c_1\Big(\de^{\frac q\si}+\de^{\left(\frac{1}{r}- \frac1\si\right)r'}\Big)\le \frac{\si-1}{4}\,,
\end{equation*}
and assume that the data $u^0$ and $f$ verify the smallness condition
\begin{equation}\label{smallness0}
\intO \Phi(u^0)\,dx+\int_0^T \normaF{f(\tau)}{L^m(\Omega)}^r\,d\tau \le \frac{\de}{2}\,.
\end{equation}
We define
$$
T_n^*=\sup\bigg\{t\in[0,T]:\q \int_\Omega \Phi(u_n(\tau))\,dx\le \de\q \forall \tau\in [0,t]\bigg\}.
$$
Note that $T_n^*>0$ by the smallness condition \eqref{smallness0} and the continuity of the function $t\mapsto \intO \Phi(u_n(t))\,dx$.
Then, if $t\le T_n^*$, the estimate \eqref{eq:fin-est0} reads
\begin{equation}\label{eq:fin-est-3}
\intO \Phi(u_n(t))\,dx+\frac{\si-1}{2}\iint_{Q_t} \frac{\av{\N u_n}^2}{(\eps +\av{u_n})^{2-\si}} \,dx\,d\tau\le \intO \Phi(u^0)\,dx+\int_0^T \normaF{f(\tau)}{L^m(\Omega)}^r\,d\tau \le \frac{\de}{2}\,.
\end{equation}
Suppose now that $T_n^*<T$.
Thus, if we take $t= T_n^*$ in \eqref{eq:fin-est-3}, we obtain a contradiction:
\begin{align*}
\de=\intO \Phi(u_n(T_n^*))\,dx&\le \intO \Phi(u^0)\,dx+\int_0^T \normaF{f(\tau)}{L^m(\Omega)}^r\,d\tau \le \frac{\de}{2}\,.
\end{align*}
Therefore $T_n^*=T$ for all $n$, and \eqref{eq:fin-est-3} provides the desired estimate (recall that $0<\eps<1$).

As before, the Gagliardo-Nirenberg inequality implies  estimate  \eqref{cor-GN2}.

Note that, going back to estimate of integral $\fbox{A}$, we have shown that $\{\av{\N u_n}^q |u_n|\, |\vp(u_n)|\}$ is uniformly bounded in $L^1(Q_T)$. On account of the growth of $\vp$, we are done.
\end{proof}

\begin{proposition}[The case $q=\frac{2}{N+1}$]\label{prop:ape-om}
If the integrals
\begin{equation}\label{small4}
\int_\Omega |u^0|(\log^*|u^0|)^{\frac{N+1}2}dx \quad\mbox{and}\quad \iint_{Q_T} |f|\,(\log^*|f|)^{\frac{N+1}2}\,dx\,d\tau
\end{equation}
are small enough (depending on $N$), then the  following a priori estimate holds:
\begin{equation}\label{stimaenergia-critico}
\max_{t\in[0,T]}\intO |u(t)|(\log^*{|u(t)|})^{\frac{N+1}2}\,dx+\iint_{Q_T}   \frac{(\log(1+\av{u_n}))^{\frac{N-1}2}}{1+\av{u_n}}|\nabla u_n|^2 \,dx\,dt\le C \q\forall n\in\mathbb{N}
\end{equation}
where $C$ depends on $N$, $|\Omega|$ and on the value of the integrals in \eqref{small4}.

Moreover,  the sequence $\big\{\av{\N u_n}^q |u_n|\log\pare{1+|u_n|}^{\frac{N+1}2}\}$ is uniformly bounded in $L^1(Q_T)$, wherewith so is $\set{\av{\N u_n}^q |u_n|}$.
\end{proposition}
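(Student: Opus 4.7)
The plan is to mirror Propositions \ref{prop:ape} and \ref{prop:ape2}: test \eqref{ftestu'} against $\varphi(u_n)$ for a suitable $\varphi$, exhibit the nonlinear gradient term as a small multiple of the coercive energy on the left-hand side, and close the estimate via a continuity bootstrap driven by the smallness \eqref{small4}. The new feature is that at the critical scale $\sigma=Nq/(2-q)=1$ the natural test function ceases to be a pure power, so I would choose a $C^1$ regularization of $\Phi(s)=|s|(\log^*|s|)^{\frac{N+1}{2}}$ and $\varphi=\Phi'$. Since $\varphi'(s)\sim\frac{N+1}{2}\,(\log|s|)^{\frac{N-1}{2}}/|s|$ for large $|s|$, plugging $\varphi(u_n)$ into \eqref{ftestu'} and using \eqref{A1} produces, modulo lower-order contributions,
\[
\frac{d}{dt}\!\int_\Omega \Phi(u_n)+c_0\!\int_\Omega|\nabla g(u_n)|^2\le \int_\Omega|u_n||\nabla u_n|^q(\log^*|u_n|)^{\frac{N+1}{2}}+\int_\Omega|f|(\log^*|u_n|)^{\frac{N+1}{2}},
\]
where $g(s)=\sqrt{|s|}\,(\log^*|s|)^{\frac{N-1}{4}}$ is selected so that $|\nabla g(u_n)|^2$ reproduces, up to lower-order terms, the integrand appearing in the second member of \eqref{stimaenergia-critico}.

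The heart of the argument is the nonlinear term. Using $|\nabla u_n|=|\nabla g(u_n)|/|g'(u_n)|$ and H\"older with exponents $(2/q,2/(2-q))$ in space and then in time (recalling $q=2/(N+1)$) yields
\[
\iint_{Q_t}\!|u_n||\nabla u_n|^q(\log^*|u_n|)^{\frac{N+1}{2}}\le C\Big(\!\iint_{Q_t}|\nabla g(u_n)|^2\Big)^{\!q/2}\Big(\!\iint_{Q_t}|u_n|^{\frac{N+2}{N}}(\log^*|u_n|)^{\frac{N^2+N+2}{2N}}\Big)^{\!(2-q)/2}\!.
\]
The second factor carries an \emph{excess} $(\log^*|u_n|)^{2/N}$ compared with what Gagliardo--Nirenberg/Sobolev would give for $g(u_n)^{\frac{2(N+2)}{N}}$, and this is the main technical obstacle. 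It is dissolved by the pointwise identity $g^{\frac{2(N+2)}{N}}(\log^*|u_n|)^{\frac{2}{N}}=g^{2}\cdot[g^{2}\log^*|u_n|]^{\frac{2}{N}}$ combined with H\"older in space with exponents $(N/(N-2),N/2)$: Sobolev bounds the first piece by $C\!\int|\nabla g(u_n)|^2$, while $g(u_n)^2\log^*|u_n|\sim\Phi(u_n)$ handles the second, giving
\[
\int_\Omega g(u_n)^{\frac{2(N+2)}{N}}(\log^*|u_n|)^{\frac{2}{N}}\,dx\le C\Big(\!\int_\Omega|\nabla g(u_n)|^2\Big)\Big(\!\int_\Omega\Phi(u_n)\Big)^{\!2/N}.
\]
Integrating in time and inserting into the previous H\"older estimate reduces the nonlinear contribution to $C_1\big(\sup_{[0,t]}\!\int\Phi(u_n)\big)^{2/(N+1)}\iint_{Q_t}|\nabla g(u_n)|^2$, i.e.\ a small multiple of the energy as soon as $\int\Phi(u_n)$ stays small.

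The forcing term is handled by the elementary log-Young inequality
\[
|f|(\log^*|u_n|)^{\frac{N+1}{2}}\le C_\eps|f|(\log^*|f|)^{\frac{N+1}{2}}+C_\eps|f|+\eps|u_n|(\log^*|u_n|)^{\frac{N+1}{2}},
\]
which follows by splitting on $\{|u_n|\le\eps^{-1}|f|\}$ and its complement. Plugging both bounds into the differential inequality and integrating in $\tau\in(0,t)$ gives, with $\xi_n(t)=\sup_{[0,t]}\!\int\Phi(u_n)$,
\[
\xi_n(t)+\big(c_0-C_1\xi_n(t)^{2/(N+1)}\big)\iint_{Q_t}|\nabla g(u_n)|^2\le \mathrm{Data}+\eps T\,\xi_n(t),
\]
where $\mathrm{Data}$ is a linear combination of the two integrals in \eqref{small4}. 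Fixing $\delta>0$ so that $C_1\delta^{2/(N+1)}\le c_0/2$ and $\eps T\le 1/4$, and assuming \eqref{smallness-critico} so that $\mathrm{Data}\le\delta/4$, the continuity bootstrap of Propositions \ref{prop:ape}, \ref{prop:ape2} applied to $T_n^\ast=\sup\{t\in[0,T]:\xi_n(t)\le\delta\}$ gives $T_n^\ast=T$, and hence \eqref{stimaenergia-critico}. The $L^1(Q_T)$ bound on $|\nabla u_n|^q|u_n|(\log(1+|u_n|))^{\frac{N+1}{2}}$ is then a direct by-product of the nonlinear estimate; removing the logarithm to obtain the bound on $|\nabla u_n|^q|u_n|$ requires just one further H\"older step on $\{|u_n|\le e\}$ using the control of $\int|\nabla u_n|^2/(1+|u_n|)$ inherited from the coercive term.
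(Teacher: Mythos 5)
Your proposal follows the paper's architecture quite closely: test with a logarithmic-power function, reduce the gradient term to $\big(\sup_{[0,t]}\int_\Omega\Phi(u_n)\big)^{2/(N+1)}$ times the coercive energy via H\"older and Sobolev, and close with a smallness/continuity bootstrap. There is, however, a concrete gap in the coercivity step. Testing \eqref{ftestu'} with $\varphi=\Phi'$ produces $\int_\Omega\varphi'(u_n)|\nabla u_n|^2$ on the left, and $\varphi'(s)$ is necessarily \emph{bounded} near $s=0$ (your $\varphi$ is a $C^1$ regularization of a function that is Lipschitz at the origin). But your auxiliary function $g(s)=\sqrt{|s|}\,(\log^*|s|)^{\frac{N-1}{4}}$ satisfies $g'(s)^2=\tfrac{1}{4|s|}$ for $|s|\le e$, which blows up at the origin; hence the claimed lower bound $c_0\int_\Omega|\nabla g(u_n)|^2\le\int_\Omega\varphi'(u_n)|\nabla u_n|^2$ fails on the set where $|u_n|$ is small, and the Sobolev inequality $\|g(u_n)\|_{2^*}^2\le C\|\nabla g(u_n)\|_2^2$ is then applied to a quantity you do not control. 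The correct choice is $g(s)=\int_0^{|s|}\sqrt{\varphi'(z)}\,dz$ — the paper's $\Psi(s)=\int_0^{|s|}(\log(1+z))^{\frac{N-1}{4}}(1+z)^{-1/2}\,dz$, whose squared derivative is exactly the weight in \eqref{stimaenergia-critico} and which behaves like $|s|^{\frac{N+3}{4}}$ (not $|s|^{1/2}$) near the origin. After this replacement your clean pointwise identities, namely $g^2\log^*|u_n|\sim\Phi(u_n)$ and the factorization dissolving the excess $(\log^*|u_n|)^{2/N}$, hold only for large $|u_n|$; the region $\{|u_n|\lesssim 1\}$ must be handled separately, and the additive constants it generates have to be tracked with care because they end up multiplying the (non-small) energy integral.

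On the forcing term your route is genuinely different from the paper's and more elementary: a pointwise splitting on $\{|u_n|\le\eps^{-1}|f|\}$ instead of the Orlicz--Young inequality with the convex function $\Gamma(s)=s^{\frac{N^2+N+2}{N(N+1)}}\exp\big(\tfrac{N+2}{N}s^{\frac{2}{N+1}}\big)$ and the computation of $\Gamma^*$. The price is that the absorbed piece $\eps\iint_{Q_t}|u_n|(\log^*|u_n|)^{\frac{N+1}{2}}$ is controlled by $\eps T\,\sup_{[0,t]}\int_\Omega\Phi(u_n)$ plus an additive term of order $\eps T|\Omega|$ (again from the comparison with the regularized $\Phi$ near zero), so $\eps$ must be chosen in terms of $T$ and $|\Omega|$, and the constant $C_\eps$ in front of $\iint|f|(\log^*|f|)^{\frac{N+1}{2}}$ inherits that dependence: your smallness threshold depends on $N$, $T$ and $|\Omega|$, whereas the paper absorbs the $u_n$-part of the Young inequality into the gradient energy via Sobolev and obtains a threshold depending only on $N$, as the statement requires. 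The remaining steps (the bootstrap with $T_n^*$, the $L^1$ bound on $|\nabla u_n|^q|u_n|(\log(1+|u_n|))^{\frac{N+1}{2}}$ as a by-product, and the removal of the logarithm) are sound.
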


\begin{proof}
We recall \eqref{eq:A} and \eqref{H} and we use $\pare{\log\pare{1+\av{u_n}}}^{\frac{N+1}{2}}\t{sign}(u_n)=\pare{\log\pare{1+\av{u_n}}}^{\frac{1}{q}}\t{sign}(u_n)$ as test function in \eqref{eq:pbn}, obtaining
\begin{align}
  &\int_\Omega \Phi(u_n(t))\,dx + \frac{N+1}2 \iint_{Q_t}\frac{|\nabla u_n|^2(\log(1+\av{u_n}))^{\frac{N-1}2}}{1+\av{u_n}}\,dx\,d\tau\nonumber\\
   &\le \iint_{Q_t}|\nabla u_n|^q \av{u_n}  (\log(1+\av{u_n}))^{\frac{N+1}2} \,dx\,d\tau+\iint_{Q_t}\av{f} (\log(1+\av{u_n}))^{\frac{N+1}2} \,dx\,d\tau + \int_\Omega \Phi(u^0)\,dx,\label{eq:main-q-critico}
\end{align}
where
\begin{equation}\label{Phi4}
\Phi(s)=\int_{0}^{|s|}\big(\log\pare{1+z}\big)^{\frac{N+1}{2}}\,dz.
\end{equation}
We define
\begin{equation}\label{phi4}
\Psi(s)=\int_{0}^{\av{s}}\frac{(\log(1+z))^{\frac{N-1}4}}{\sqrt{1+z}}\,dz,
\end{equation}
so that
$$
\iint_{Q_t}   \frac{|\nabla u_n|^2(\log(1+\av{u_n}))^{\frac{N-1}2}}{1+\av{u_n}} \,dx\,d\tau=\iint_{Q_t}  |\nabla u_n|^2(\Psi'(u_n))^2 \,dx\,d\tau
=\iint_{Q_t}  |\nabla \Psi(u_n)|^2\,dx\,d\tau.
$$
We now focus on the integral involving the $q$ power of the gradient. We first rewrite it as
\begin{align*}
&\iint_{Q_t}|\nabla u_n|^q \av{u_n}  (\log(1+\av{u_n}))^{\frac{N+1}2}\,dx\,d\tau\\
&=\iint_{Q_t}|\nabla u_n|^q (\Psi'(u_n))^q\av{u_n}  (\log(1+\av{u_n}))^{\frac{N+1}2}(\Psi'(u_n))^{-q}\,dx\,d\tau\\
&=\iint_{Q_t}|\nabla u_n|^q (\Psi'(u_n))^q\av{u_n}  (1+\av{u_n})^{\frac{1}{N+1}}\pare{\log(1+\av{u_n})}^{\frac{N^2+N+2}{2(N+1)}}\,dx\,d\tau.
\end{align*}
We use Hölder's inequality twice, first with exponents $(\frac 2q, \frac2{2-q})=(N+1,\frac{N+1}{N})$   and then with exponents $(\frac{ 2^*}{2}, \frac {N}{2})$, and also Sobolev's inequality,
 obtaining
\begin{align*}
&\iint_{Q_t}|\nabla u_n|^q \av{u_n}  (\log(1+\av{u_n}))^{\frac{N+1}2} \,dx\,d\tau\nonumber\\
&\le\int_0^t \pare{\intO |\nabla u_n|^2 (\Psi'(u_n))^2\,dx}^{\frac{1}{N+1}} \pare{\intO \av{u_n} ^{\frac{N+1}{N}} (1+\av{u_n})^{\frac{1}{N}}\pare{\log(1+\av{u_n})}^{\frac{N^2+N+2}{2N}} \,dx}^{\frac{N}{N+1}}\,d\tau \nonumber\\
&\le\int_0^t \pare{\intO |\nabla \Psi(u_n)|^2\,dx}^{\frac{1}{N+1}}
\pare{\intO( \Psi(u_n) )^{2^*}\,dx}^{\frac{N}{N+1}\frac{2}{2^*}} \pare{\intO \psi(u_n) \,dx}^{\frac{2}{N+1}}\,d\tau\nonumber \\
&\le c_1 \sup_{\tau\in[0,t]}  \pare{\intO \phi(u_n(\tau)) \,dx}^{\frac{2}{N+1}}  \iint_{Q_t} |\nabla \Psi(u_n)|^2\,dx
\,d\tau ,\label{eq:grad-critico}
\end{align*}
where $c_1=c_1(N)$ and
\begin{equation}\label{psi4}
\phi(s)=\av{s} ^{\frac{N+1}{2}} (1+\av{s})^{\frac{1}{2}}\pare{\log(1+\av{s})}^{\frac{N^2+N+2}{4}}( \Psi(s) )^{-N}.
\end{equation}
We will show that there exists a positive constant $c_2=c_2(N)$ such that
\begin{equation}\label{PsiPhi}
\phi(s) \le c_2\, \Phi(s)\quad \forall\,s\in \R.
\end{equation}
Indeed, using the definitions \eqref{Phi4}, \eqref{phi4} and \eqref{psi4} and de l'H\^opital's rule, we obtain
\begin{equation}\label{funzinfty}
\Phi(s)\sim|s|\big(\log(|s|)\big)^{\frac{N+1}{2}}\!,
\quad
\Psi(s)\sim 2\sqrt{|s|}\big(\log(|s|)\big)^{\frac{N-1}{4}}\!,
\quad
\phi(s)\sim c\,|s|\big(\log(|s|)\big)^{\frac{N+1}{2}}
\quad
\mbox{for $s\to \infty$,}
\end{equation}
\begin{equation}\label{funzzero}
\Phi(s)\sim c\, |s|^{\frac{N+3}2},
\quad
\Psi(s)\sim c\, |s|^{\frac{N-1}4},
\quad
\phi(s)\sim c\, |s|^{N+1}
\quad
\mbox{for $s\to 0$,}
\end{equation}
for some (different) constants $c$ depending on $N$.
From these formulas, it easily follows that the ratio $\phi(s)/\Phi(s)$ is bounded near both zero and $\infty$, therefore \eqref{PsiPhi} is proven.
Thus
\begin{equation}\label{stimagrad4}
\iint_{Q_t}|\nabla u_n|^q \av{u_n}  (\log(1+\av{u_n}))^{\frac{N+1}2} \,dx\,d\tau
\le c_3 \sup_{\tau\in[0,t]}  \pare{\intO \Phi(u_n(\tau)) \,dx}^{\frac{2}{N+1}}  \iint_{Q_t} |\nabla \Psi(u_n)|^2\,dx\,d\tau,
\end{equation}
where $c_3=c_3(N)$.\\
In order to estimate the integral involving the forcing term $f$,
we consider an increasing, convex function $\Gamma(s) : [0,+\infty)\to [0,+\infty)$ such that
\begin{equation}\label{Gamma4}
\Gamma(0)=0\,,\qquad \Gamma(s) = s^{\frac{N^2+N+2}{N(N+1)}}\exp \Big(\frac{N+2}Ns^{\frac2{N+1}}\Big)\quad \forall\,s\ge k_0=k_0(N)>0,
\end{equation}
where $\exp(z)=e^z$. Note that we cannot take $k_0=0$ because the function in \eqref{Gamma4}  is not convex near zero.
Then we can write
\begin{align*}
\iint_{Q_t}&\av{f} (\log(1+\av{u_n}))^{\frac{N+1}2} \,dx\,d\tau\nonumber\\
&=\iint_{Q_t\cap\set{\av{u_n}\le k_0}}\av{f} (\log(1+\av{u_n}))^{\frac{N+1}2} \,dx\,d\tau+\iint_{Q_t\cap \set{\av{u_n}> k_0}}\av{f} (\log(1+\av{u_n}))^{\frac{N+1}2} \,dx\,d\tau\nonumber\\
&\le (\log(1+k_0))^{\frac{N+1}2}\norm{f}_{L^1(Q_T)}
+\underbracket[0.14ex]{\iint_{Q_t\cap \set{\av{u_n}> k_0}}\av{f} (\log(1+\av{u_n}))^{\frac{N+1}2} \,dx\,d\tau}_{\fbox{I}}.\label{eq:fI}
\end{align*}
As far as the integral \fbox{I} is concerned, we apply Young's inequality with the function $\Gamma$ defined in \eqref{Gamma4} and its conjugate $\Gamma^*$, obtaining
\begin{equation}\label{stimaI}
\fbox{I}\le \iint_{Q_t}\Gamma^*(f)\,dx\,d\tau+\iint_{Q_t\cap \set{\av{u_n}> k_0}}\Gamma\pare{(\log(1+\av{u_n}))^{\frac{N+1}2}}\,dx\,d\tau,
\end{equation}
where 
$$\Gamma^*(s) = \int_0^s (\Gamma')^{-1}(\tau)\,d\tau.$$
Our choice of $\Gamma$, together with the estimates \eqref{funzinfty}, yield
\begin{align*}%
&\iint_{Q_t\cap \set{\av{u_n}> k_0}}\Gamma\pare{(\log(1+\av{u_n}))^{\frac{N+1}{2}}}\,dx\,d\tau\\
&=  \iint_{Q_t\cap \set{\av{u_n}> k_0}}(\log(1+\av{u_n}))^{\frac{N^2+N+2}{2N}}(1+\av{u_n})^{\frac{N+2}{N}}\,dx\,d\tau\\
&\le c_4\iint_{Q_t} \Phi(u_n)^{\frac2N}\,\Psi(u_n)^2\,dx\,d\tau\le c_4 \int_0^t \bigg(\int_\Omega\Phi(u_n(\tau))\,dx\bigg)^\frac2N\bigg(\int_\Omega\Psi(u_n(\tau))^{2^*}\,dx\bigg)^\frac2{2^*}\,d\tau \\
&\le c_5 \sup_{\tau\in [0,t]}\bigg(\int_\Omega\Phi(u_n(\tau))\,dx\bigg)^\frac2N
\iint_{Q_t} |\nabla\Psi(u_n)|^{2}\,dx\,d\tau,
\end{align*}
where $c_5=c_5(N)$. In the last passages we have used H\"older's and Sobolev's inequalities.
\\
To estimate the first integral in \eqref{stimaI}, we will show that the conjugate function $\Gamma^*(s)$ of $\Gamma(s)$ 
satisfies
\begin{equation}\label{Gamma*}
\Gamma^*(s) \leq c\,s\,(\log^* s)^\frac{N+1}2\quad \mbox{for all $s\ge 0$,}
\end{equation}
for some $c\in \R$. It is easy to check that
$$
   \Gamma'(t) \sim c\,t^\frac2N \exp \Big(\frac{N+2}N t^{\frac2{N+1}}\Big) \quad \mbox{for $t\to+\infty$.}
$$
From this it follows easily, using de l'H\^{o}pital's rule, that
\begin{align*}
   \lim_{s\to+\infty}\frac{\Gamma^*(s)}{s\,(\log s)^\frac{N+1}2} &=\lim_{s\to+\infty}\frac{(\Gamma')^{-1}(s)}{(\log s)^\frac{N+1}2}
   =
   \lim_{t\to+\infty}\frac{t}{(\log(\Gamma'(t)))^\frac{N+1}2} \\
   &=
   \lim_{t\to+\infty}\frac{t}{\Big(\log\Big(c\,t^\frac2N \exp \Big(\frac{N+2}Nt^{\frac2{N+1}}\Big)\Big)^\frac{N+1}2} = \pare{\frac{N}{N+2}}^{\frac{N+1}{2}}
   \in (0,+\infty)\,,
\end{align*}
which proves that the inequality in \eqref{Gamma*} is true for $s\ge 1$. Since $\Gamma^*$ is $C^1$ and $\Gamma^*(0)=0$, the inequality also holds for $s\in [0,1]$, if necessary with a different constant $c$. Therefore \eqref{Gamma*} is proved.
We can now estimate\eqref{stimaI}   as
$$
   \fbox{I}\le c_6\iint_{Q_T} |f|\,(\log^*|f|)^{\frac{N+1}2}\,dx\,d\tau  +
   c_5 \sup_{\tau\in [0,t]}\bigg(\int_\Omega\Phi(u_n(\tau))\,dx\bigg)^\frac2N
\iint_{Q_t} |\nabla\Psi(u_n)|^{2}\,dx\,d\tau\,.
$$
We gather all the previous computations for the terms in \eqref{eq:main-q-critico}, obtaining
\begin{align}
  &\int_\Omega \Phi(u_n(t))\,dx   \nonumber\\
  &\quad + 
  \bigg[\frac{N+1}2 - c_3 \sup_{\tau\in[0,t]}  \pare{\intO \Phi(u_n(\tau)) \,dx}^{\frac{2}{N+1}}
  - c_5 \sup_{\tau\in[0,t]}  \pare{\intO \Phi(u_n(\tau)) \,dx}^{\frac{2}{N}}\bigg] \iint_{Q_t}|\nabla \Psi(u_n)|^2\,dx\,d\tau
  \nonumber\\
   &\le   c_6 \iint_{Q_T} |f|\,(\log^*|f|)^{\frac{N+1}2}\,dx\,d\tau + c_7\norm{f}_{L^1(Q_T)} +\int_\Omega\Phi(u^0)\,dx \nonumber\\
&\le   c_8 \iint_{Q_T} |f|\,(\log^*|f|)^{\frac{N+1}2}\,dx\,d\tau +\int_\Omega\Phi(u^0)\,dx.\label{eq:main-critico}
\end{align}
We now fix $\de=\de(N)>0$ such that
\begin{equation*}
c_3\,\de^{\frac{2}{N+1}}+c_5\,\de^\frac{2}{N} \le \frac{N+1}4,
\end{equation*}
and assume that the data $u^0$ and $f$ verify the smallness condition
\begin{equation}\label{smallness-critico}
c_8 \iint_{Q_T} |f|\,(\log^*|f|)^{\frac{N+1}2}\,dx\,d\tau  +\int_\Omega\Phi(u^0)\,dx \le\frac\delta2\,.
\end{equation}
We define
$$
T_n^*=\sup\bigg\{t\in[0,T]:\q \int_\Omega  \Phi(u_n(\tau))\,dx \le \de\q \forall \tau\in [0,t]\bigg\}.
$$
Note that $T_n^*>0$ by the smallness condition \eqref{smallness-critico} and the continuity of the function $t\mapsto \intO \Phi(u_n(t))\,dx$.
Then, if $t\le {T_n}^*$, the estimate \eqref{eq:main-critico} reads
\begin{equation} \label{stima4}
  \int_\Omega \Phi(u_n(t))\,dx+ \frac{N+1}{4} \iint_{Q_t}|\nabla \Psi(u_n)|^2\,dx\,d\tau \le\frac\delta2.
\end{equation}
We can reason by contradiction as in the previous results to prove that $T_n^*=T$ and obtain the stated result.
Note that from \eqref{funzinfty} and \eqref{funzzero} it follows that
$$
   c_9 |s| (\log^*|s|)^\frac{N+1}2 - c_{10} \le \Phi(s) \le c_{11} |s| (\log^*|s|)^\frac{N+1}2,
$$
therefore condition \eqref{small4} implies \eqref{smallness-critico}, while the estimate \eqref{stima4} implies \eqref{stimaenergia-critico}. Finally, the estimate of $\set{\av{\N u_n}^q |u_n|\log\big(1+|u_n|\big)^{\frac{N+1}{2}}}$ in $L^1(Q_T)$ is a consequence of \eqref{stimagrad4}.
\end{proof}

In this case, we may also appeal to Gagliardo-Nirenberg's inequality to get an a priori estimate of the approximate solutions in a suitable Lebesgue space.
Even though this estimate is not as good as the one obtained in Proposition \ref{prop:ape-om}, its application in Proposition \ref{prop:cmpt-om} below is easier.
\begin{corollary}\label{cor-GN3}
Under the same assumptions as in Proposition \ref{prop:ape-om}, the following estimates hold
\begin{itemize}
\item $\ds \max_{t\in[0,T]}\int_\Omega|u_n(t)|\, dx+\iint_{Q_T}\frac{|\nabla u_n|^2}{1+|u_n|}\le C(N,|\Omega|)\qquad \forall n\in\bN\,$.
\item  $\{u_n\}$ is bounded in $L^{\frac{N+2}{N}}(Q_T)$.
\end{itemize}
\end{corollary}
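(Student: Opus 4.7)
The plan is to extract both estimates from Proposition \ref{prop:ape-om}, using an auxiliary truncation argument on the approximate problems to fill the gap where the weight $(\log(1+|u_n|))^{(N-1)/2}$ degenerates.

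The $L^\infty(0,T;L^1(\Omega))$ bound for $\{u_n\}$ is immediate: since $\log^* s\ge 1$ by definition, one has $|u_n|\le |u_n|(\log^*|u_n|)^{(N+1)/2}$, so the first term in \eqref{stimaenergia-critico} already controls $\max_{t\in[0,T]}\int_\Omega |u_n(t)|\,dx$.

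For the gradient integral, I would split $Q_T$ into the region $\{|u_n|>e-1\}$, where $(\log(1+|u_n|))^{(N-1)/2}\ge 1$ and hence $\iint_{\{|u_n|>e-1\}}\frac{|\nabla u_n|^2}{1+|u_n|}\,dx\,dt$ is directly dominated by the second term in \eqref{stimaenergia-critico}, and the region $\{|u_n|\le e-1\}$, where $\nabla u_n=\nabla T_{e-1}(u_n)$ and $1/(1+|u_n|)\le 1$. On the latter region one only has to control $\iint_{Q_T}|\nabla T_{e-1}(u_n)|^2\,dx\,d\tau$. This is the main obstacle, and it is handled by testing \eqref{eq:pbn} with $T_{e-1}(u_n)$. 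Using \eqref{A1}, \eqref{H}, \eqref{alfagamma1} and $|T_{e-1}(u_n)|\le e-1$, the resulting right-hand side is bounded by
\[
(e-1)\iint_{Q_T}\!|u_n||\nabla u_n|^q\,dx\,d\tau + (e-1)\iint_{Q_T}\!|f|\,dx\,d\tau + \int_\Omega \Theta_{e-1}(u^0)\,dx,
\]
and each piece is finite: the first by the last conclusion of Proposition \ref{prop:ape-om}, the second by the smallness assumption \eqref{small4} (which trivially implies $f\in L^1(Q_T)$), and the third by the $L^1$ bound on $u^0$ already contained in \eqref{small4}.

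For the $L^{(N+2)/N}(Q_T)$ bound I would introduce $w_n=\sqrt{1+|u_n|}-1$. Because $u_n\in L^2(0,T;H_0^1(\Omega))$ with zero boundary trace, so does $w_n$, and $|\nabla w_n|^2=\frac{|\nabla u_n|^2}{4(1+|u_n|)}$ is bounded in $L^1(Q_T)$ by the step just finished. The pointwise inequality $w_n^2\le 1+|u_n|$ together with the $L^\infty(0,T;L^1(\Omega))$ bound on $u_n$ gives $\{w_n\}$ bounded in $L^\infty(0,T;L^2(\Omega))$. Applying Proposition \ref{teo:GN} with $\eta=h=2$ yields $\{w_n\}\subset L^{2(N+2)/N}(Q_T)$, and the elementary bound $|u_n|\le 2w_n^2+2$ then produces the desired uniform bound of $\{u_n\}$ in $L^{(N+2)/N}(Q_T)$.
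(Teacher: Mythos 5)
Your proof is correct and, for the second bullet, follows exactly the paper's route: the paper also applies the Gagliardo--Nirenberg inequality (Proposition \ref{teo:GN}) to $\sqrt{1+|u_n|}-1$, which is bounded in $L^\infty(0,T;L^2(\Omega))\cap L^2(0,T;H_0^1(\Omega))$ once the first bullet is established. The genuine difference is in the first bullet: the paper disposes of it with the single sentence ``Proposition \ref{prop:ape-om} implies the first estimate,'' whereas you correctly observe that the weight $(\log(1+|u_n|))^{(N-1)/2}$ in \eqref{stimaenergia-critico} \emph{degenerates} as $u_n\to 0$ (recall $N\ge 3$), so that estimate only dominates $\iint \frac{|\nabla u_n|^2}{1+|u_n|}$ on the set where $|u_n|$ is bounded away from zero. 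Your extra step --- testing \eqref{eq:pbn} with $T_{e-1}(u_n)$ and bounding the right-hand side via the $L^1(Q_T)$ bound on $|u_n||\nabla u_n|^q$ from Proposition \ref{prop:ape-om} together with the $L^1$ integrability of $u^0$ and $f$ implied by \eqref{small4} --- is precisely the standard truncation estimate the paper uses elsewhere (in the proof of Proposition \ref{prop:cmpt-om}) and is what actually closes this point. So your argument is a more complete version of the paper's; all the individual steps (the pointwise inequalities for $w_n=\sqrt{1+|u_n|}-1$, the choice $\eta=h=2$ in Gagliardo--Nirenberg, and the recovery of $u_n$ from $w_n^2$) check out.
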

\begin{proof}
Note that Proposition \ref{prop:ape-om} implies the first estimate.
Hence, the sequence  $\big\{\sqrt{1+|u_n|}-1\big\}$ is bounded in $L^\infty(0,T;L^2(\Omega))\cap L^2(0,T;H_0^1(\Omega))$ and so the result follows from the  Gagliardo-Nirenberg inequality.
\end{proof}

\begin{proposition}[The case $0<q<\frac{2}{N+1}$ and $\si=1$]\label{prop:ape-1}
Assume that
\[
u^0\in L^1(\Omega),\qq f\in L^1(Q_T)\,,
\]
and that their norms in those spaces are small enough (depending on $N,q,r,|\Omega|$, see explicit bound \eqref{eq:smallness-1} below).
Then the  following a priori estimate holds:
\begin{equation*}\label{stima3}
\max_{t\in [0,T]}\intO |u_n(t)|\,dx+\iint_{Q_T} \frac{\av{\N u_n }^2}{\pare{1+\av{u_n}}^{\lm+1}} \,dx\,d\tau\le C,\qquad
\mbox{with $\lm=\dfrac{2-q(N+1)}{N}\in(0,1)$.}
\end{equation*}
Moreover, $\set{\av{\N u_n}^q |u_n|}$ is uniformly bounded in $L^1(Q_T)$.
\end{proposition}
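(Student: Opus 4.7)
The plan is to mirror the strategy of Propositions~\ref{prop:ape} and~\ref{prop:ape2}, but with a bounded test function adapted to the $L^1$ setting. I would test \eqref{ftestu'} with $\varphi(u_n)$, where
\[
\varphi(s)=\sgn(s)\int_0^{|s|}(1+\eta)^{-(\lambda+1)}\,d\eta,
\]
so that $|\varphi|\le 1/\lambda$, $\varphi'(s)=(1+|s|)^{-(\lambda+1)}$, and its primitive $\Phi(s)=\int_0^s\varphi(\eta)\,d\eta$ satisfies $\Phi(s)\sim |s|/\lambda$ at infinity and $\Phi(s)\sim s^2/2$ near zero. The Leray--Lions condition \eqref{A1}, the growth bound \eqref{H}, and the normalization \eqref{alfagamma1} then yield
\[
\int_\Omega\Phi(u_n(t))\,dx+\alpha\iint_{Q_t}\frac{|\nabla u_n|^2}{(1+|u_n|)^{\lambda+1}}\,dx\,d\tau \le \tfrac1\lambda\iint_{Q_t}|u_n|\,|\nabla u_n|^q\,dx\,d\tau+\tfrac1\lambda\|f\|_{L^1(Q_T)}+\int_\Omega\Phi(u^0)\,dx.
\]

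To control the superlinear term, I would introduce $\Psi(s)=\sgn(s)\int_0^{|s|}(1+\eta)^{-(\lambda+1)/2}\,d\eta$, so that $|\nabla\Psi(u_n)|^2=|\nabla u_n|^2/(1+|u_n|)^{\lambda+1}$ and $\Psi(s)\sim c_\lambda|s|^{(1-\lambda)/2}$ as $|s|\to\infty$; in particular $\|\Psi(u_n)(t)\|_{L^{2/(1-\lambda)}}^{2/(1-\lambda)}\lesssim\|u_n(t)\|_{L^1}+|\Omega|$. H\"older's inequality with exponents $(2/q,2/(2-q))$ gives
\[
\iint_{Q_t}|u_n|\,|\nabla u_n|^q\,dx\,d\tau\le\Bigl(\iint_{Q_t}|\nabla\Psi(u_n)|^2\Bigr)^{q/2}\Bigl(\iint_{Q_t}|u_n|^{\frac{2}{2-q}}(1+|u_n|)^{\frac{q(\lambda+1)}{2-q}}\,dx\,d\tau\Bigr)^{(2-q)/2},
\]
and the integrand in the second factor grows at infinity like $|u_n|^p$ with $p=1+q(N+1)/N$, thanks to the factorisation $2N+q(N+2)-q^2(N+1)=(N+q(N+1))(2-q)$. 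The decisive point is that, with $\lambda=(2-q(N+1))/N$, the identity $q(N+1)+N\lambda=2$ forces $p\cdot\tfrac{2}{1-\lambda}$ to coincide exactly with the Gagliardo--Nirenberg exponent $y=\tfrac{2(N(1-\lambda)+2)}{N(1-\lambda)}$ arising from Proposition~\ref{teo:GN} applied to $\Psi(u_n)$ with $\eta=2$ and $h=2/(1-\lambda)$; this yields
\[
\iint_{Q_t}|u_n|^{1+q(N+1)/N}\,dx\,d\tau\le C\Bigl(\sup_{\tau\in[0,t]}\|u_n(\tau)\|_{L^1}\Bigr)^{2/N}\iint_{Q_t}|\nabla\Psi(u_n)|^2\,dx\,d\tau+C|Q_T|.
\]

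Combining these displays, applying Young's inequality to absorb the pure $(\iint|\nabla\Psi(u_n)|^2)^{q/2}$ term, and using $\int|u_n|\le c\int\Phi(u_n)+c|\Omega|$, one arrives at
\[
\int_\Omega\Phi(u_n(t))\,dx+\tfrac{\alpha}{2}\iint_{Q_t}|\nabla\Psi(u_n)|^2\,dx\,d\tau\le c_0\Bigl[\sup_{\tau\in[0,t]}\int_\Omega\Phi(u_n(\tau))\,dx\Bigr]^{(2-q)/N}\iint_{Q_t}|\nabla\Psi(u_n)|^2\,dx\,d\tau+c_1\Bigl(\int_\Omega\Phi(u^0)+\|f\|_{L^1(Q_T)}+1\Bigr).
\]
The argument is then closed by the same bootstrap as in Propositions~\ref{prop:ape} and~\ref{prop:ape2}: I pick $\delta>0$ so small that $c_0\delta^{(2-q)/N}\le\alpha/4$, impose the smallness condition
\begin{equation}\label{eq:smallness-1}
\int_\Omega\Phi(u^0)\,dx+\|f\|_{L^1(Q_T)}\le\frac{\delta}{2c_1},
\end{equation}
and define $T_n^\star=\sup\{t\in[0,T]:\sup_{\tau\le t}\int_\Omega\Phi(u_n(\tau))\,dx\le\delta\}$; continuity of $t\mapsto\int_\Omega\Phi(u_n(t))\,dx$ and the estimate above imply $T_n^\star=T$, yielding the two claimed uniform bounds, and the $L^1(Q_T)$ control of $|\nabla u_n|^q|u_n|$ then follows at once from the H\"older estimate. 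The main technical obstacle is the algebraic matching of exponents between the H\"older and Gagliardo--Nirenberg steps: the two exponents line up only for the specific value $\lambda=(2-q(N+1))/N$, and the assumption $0<q<2/(N+1)$ is precisely what guarantees $\lambda\in(0,1)$, so that $\varphi$ is bounded (allowing its use as a test function) and $\Psi$ grows at infinity with a finite exponent $2/(1-\lambda)$.
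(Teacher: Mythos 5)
Your overall strategy coincides with the paper's: test with a bounded, concave--type function whose derivative is $(\,\cdot+|s|)^{-(\lambda+1)}$, run H\"older with exponents $\pare{\frac2q,\frac2{2-q}}$, control the resulting space integral through Sobolev/Gagliardo--Nirenberg applied to $\Psi(u_n)$, and close with a continuity bootstrap under a smallness condition. The exponent bookkeeping is correct (the identity $q(N+1)+N\lambda=2$ and the matching of $p=1+q(N+1)/N$ with the Gagliardo--Nirenberg exponent are exactly the computations the paper performs). The gap is that you have frozen the regularization parameter at $\eps=1$, whereas the paper works with the one--parameter family $\varphi_\eps,\Phi_\eps,\Psi_\eps$ of \eqref{funzioni2} and chooses $\eps$ small \emph{before} choosing $\delta$. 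This is not cosmetic. Because $q<\frac{2}{N+1}$ gives $\frac{Nq}{2-q}<1$, the pointwise comparison between $|s|^p$ (or $|s|$) and $\Phi(s)\sim s^2$ near $s=0$ necessarily produces additive, data--independent constants: your step $\int_\Omega|u_n|\le c\int_\Omega\Phi(u_n)+c|\Omega|$ turns the absorbable coefficient $c_0\big(\sup\int\Phi(u_n)\big)^{(2-q)/N}$ into $c_0\big(\sup\int\Phi(u_n)\big)^{(2-q)/N}+c_0|\Omega|^{(2-q)/N}$, and the second summand multiplies the full gradient integral by a \emph{fixed} constant that no smallness of the data can shrink below the coercivity constant. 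Likewise, your Young step on the dangling factor $c|Q_T|^{(2-q)/2}\big(\iint|\nabla\Psi(u_n)|^2\big)^{q/2}$ produces a fixed additive constant $c_1$ on the right--hand side, so the bootstrap needs simultaneously $\delta\ge 2c_1$ and $c_0\delta^{(2-q)/N}\le\alpha/4$; there is no remaining parameter to reconcile these two requirements, and for generic $|\Omega|,|Q_T|,N,q$ they are incompatible.

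The paper's proof is engineered precisely to avoid this: the key inequality \eqref{eq:itiseasytoprove} replaces your ``$+1$'' by ``$+\eps^{1-\lambda}$'', the H\"older/Sobolev step is carried out \emph{per time slice} so that the powers $q/2$ and $(2-q)/2$ of $\int_\Omega|\nabla u_n|^2(\eps+|u_n|)^{-(1+\lambda)}\,dx$ recombine into the first power (no Young step on a sublinear power of the gradient integral is ever needed), and consequently every spurious term appears as $C_1\eps^{\beta}\iint|\nabla\Psi_\eps(u_n)|^2$ with $\beta>0$. Fixing first $\eps$ so that $C_1\eps^\beta<\lambda/4$, and only then $\delta$ and the smallness of the data (which, as in the paper's condition, involves $\eps^{-\lambda}\norm{f}_{L^1(Q_T)}$), makes the absorption and the contradiction argument at $T_n^*$ go through. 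To repair your proof you would need to reintroduce this parameter (or find another mechanism to make the $|\Omega|^{(2-q)/N}$-- and $|Q_T|$--type constants small), since as written the final absorption step fails.
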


For $\eps>0$, we define the functions
\begin{subequations}\label{funzioni2}
\begin{align}
\varphi_\eps(s) &=\bigg(\frac{1}{\eps^\lm}-\frac{1}{\pare{\eps+|s|}^\lm}\bigg)\t{sign}{(s)}, \qquad \Psi_\eps(s)= \big[(\eps+|s|)^\frac{1-\lambda}2 - \eps^\frac{1-\lambda}2 \big]\\
\Phi_\eps(s)&= \int_0^s
\varphi_\eps(\eta)\,d\eta = \frac{|s|}{\eps^\lambda} - \frac1{1-\lambda} \big[(\eps+|s|)^{1-\lambda} - \eps^{1-\lambda} \big].
\end{align}
\end{subequations}

\begin{lemma}\label{lemmafunzioni2} Fix $\eps>0$ and let $\varphi_\eps$, $\Psi_\eps$ and $\Phi_\eps$ be the functions defined in  \eqref{funzioni2}. Then there exist positive constants $c_i$ (depending only on $\lambda$) such that, for every $\eps>0$ and for every $s\in\R$:
\begin{equation}\label{disug11}
    c_1\,\frac{|s|}{\eps^\lambda(\eps+|s|)} \le |\varphi_\eps(s)| \le c_2\,\frac{|s|}{\eps^\lambda(\eps+|s|)}\,;
\end{equation}
\begin{equation}\label{disug13}
    c_5\,\frac{|s|}{(\eps+|s|)^\frac{1+\lambda}2} \le \Psi_\eps(s) \le c_6\,\frac{|s|}{(\eps+|s|)^\frac{1+\lambda}2}\,;
\end{equation}
\begin{equation}\label{disug12}
    c_3\,\frac{s^2}{\eps^\lambda(\eps+|s|)} \le \Phi_\eps(s) \le c_4\,\frac{s^2}{\eps^\lambda(\eps+|s|)}\,.
\end{equation}
\end{lemma}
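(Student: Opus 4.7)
By the oddness of $\varphi_\eps$ and the evenness of $\Psi_\eps$ and $\Phi_\eps$, it suffices to prove each pair of inequalities for $s\ge 0$. The core idea is a scaling argument: for each of the three estimates, we will substitute $t=s/\eps\in[0,\infty)$, factor out the appropriate power of $\eps$ from both sides, and reduce to a single one-variable inequality of the form $c_-\le F(t)\le c_+$ on $[0,\infty)$, where $F$ is continuous on $(0,\infty)$ with finite, strictly positive limits as $t\to 0^+$ and as $t\to+\infty$. This gives the required two-sided bound with constants independent of $\eps$.

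For \eqref{disug11}, a direct computation gives
\[
|\varphi_\eps(s)|=\frac{(\eps+s)^\lambda-\eps^\lambda}{\eps^\lambda(\eps+s)^\lambda},
\qquad \frac{|\varphi_\eps(s)|}{\,s/[\eps^\lambda(\eps+s)]\,}=\frac{(1+t)^\lambda-1}{t(1+t)^{\lambda-1}}=:F_1(t).
\]
Taylor expansion at $0$ yields $F_1(t)\to\lambda$, while for $t\to\infty$ one finds $F_1(t)\to 1$, so $F_1$ is bounded above and below on $[0,\infty)$ by positive constants depending only on $\lambda$. For \eqref{disug13}, the same substitution gives
\[
\frac{\Psi_\eps(s)}{\,s/(\eps+s)^{(1+\lambda)/2}\,}=\frac{\bigl[(1+t)^{(1-\lambda)/2}-1\bigr](1+t)^{(1+\lambda)/2}}{t}=:F_2(t),
\]
whose limits are $(1-\lambda)/2$ at $0$ and $1$ at $\infty$, again yielding the claim.

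The estimate \eqref{disug12} is the most delicate because $\Phi_\eps$ is a difference of two large quantities that cancel to leading order near $s=0$. After the same substitution one obtains
\[
\frac{\Phi_\eps(s)}{\,s^2/[\eps^\lambda(\eps+s)]\,}=\frac{1+t}{t^2}\Bigl[t-\frac{(1+t)^{1-\lambda}-1}{1-\lambda}\Bigr]=:F_3(t).
\]
The main point is to verify that the bracket is strictly positive and quadratic in $t$ near $0$; a second-order Taylor expansion
\[
\frac{(1+t)^{1-\lambda}-1}{1-\lambda}=t-\frac{\lambda}{2}\,t^2+o(t^2)
\]
shows $F_3(t)\to\lambda/2$ as $t\to 0^+$. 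As $t\to+\infty$, the term $t$ dominates the $(1+t)^{1-\lambda}/(1-\lambda)$ term (since $1-\lambda<1$), so $F_3(t)\to 1$. Continuity of $F_3$ on $(0,\infty)$ (and strict positivity, which one checks by noting that $t\mapsto (1+t)^{1-\lambda}$ is concave, so its tangent at $t=0$, namely $1+(1-\lambda)t$, lies above it, giving $t-\frac{(1+t)^{1-\lambda}-1}{1-\lambda}>0$ for $t>0$) then yields the two-sided bound, completing the proof.
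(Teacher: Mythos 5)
Your proof is correct and follows essentially the same route as the paper: the paper also reduces to $\eps=1$ via the scaling identity $\varphi_\eps(s)=\eps^{-\lambda}\varphi_1(s/\eps)$ and then checks that the relevant ratio has finite positive limits at $0$ and at $\infty$, proving only \eqref{disug11} explicitly and remarking that the other two are similar. Your write-up simply carries out the analogous computations for \eqref{disug13} and \eqref{disug12} in full (including the useful concavity observation guaranteeing strict positivity of $F_3$), and all the stated limits are correct.
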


 \begin{proof}
We only prove \eqref{disug11}, the other two can be proved similarly.

The proof for $\eps=1$ follows from checking that the limits of $\dfrac{ |\varphi_1(s)|}{\av{s}/(1+|s|)}$ for $s\to0$ and for $|s|\to+\infty$ are finite and positive. The general case follows from the equality
$$
   \varphi_\eps(s) = \frac1{\eps^\lambda}\, \varphi_1\Big(\frac s\eps\Big).
$$
 \end{proof}

 \begin{proof}[Proof or Proposition \ref{prop:ape-1}]
In the following, we use the functions defined in \eqref{funzioni2}.\\
We recall \eqref{eq:A} and \eqref{H} and we take $\varphi_\eps(u)$ as test function in \eqref{eq:pbn}, obtaining
\begin{align*}
&\intO \Phi_\eps(u_n(t))\,dx+\lm\iint_{Q_t} \frac{\av{\N u_n }^2}{\pare{\eps+\av{u_n}}^{1+\lm}} \,dx\,d\tau\nonumber\\
&\le \iint_{Q_t} \av{\N u_n}^q |u_n||\vp_\eps(u_n)|\,dx \,d\tau+\iint_{Q_t} |f|\, |\vp_\eps(u_n)|\,dx\,d\tau+\intO \Phi_\eps(u^0)\,dx.
\end{align*}
Using inequality \eqref{disug11}, we deduce
\begin{align*}
&\intO \Phi_\eps(u_n(t))	\,dx+\lm\iint_{Q_t} \frac{\av{\N u_n }^2}{\pare{\eps+\av{u_n}}^{1+\lm}} \,dx\,d\tau\nonumber\\
&\le \frac{c}{\eps^\lm}\,\iint_{Q_t} \frac{\av{\N u_n}^q |u_n|^2}{\eps+|u_n|}\,dx \,d\tau+\frac{c}{\eps^\lm}\iint_{Q_t}\av{ f} \,dx\,d\tau+\intO \Phi_\eps(u^0)\,dx\nonumber.
\end{align*}
We deal with the term with $|\nabla u_n|^q$ using H\"older's inequality with exponents $\Big(\frac2q\,,\,\frac2{2-q}\Big)$, and obtaining
\begin{equation}\label{inter1}
\frac{1}{\eps^\lm}\,\int_{\Omega} \frac{\av{\N u_n}^q |u_n|^2}{\eps+|u_n|}\,dx \le \frac{1}{\eps^\lm}\bigg(\intO \frac{\av{\N u_n }^2}{\pare{\eps+\av{u_n}}^{1+\lm}} \,dx\bigg)^\frac{q}{2}
\bigg(\intO \pare{\eps+\av{u_n}}^\frac{(1+\lm)q-2}{2-q} |u_n|^\frac{4}{2-q}\,dx\bigg)^\frac{2-q}{2}.
\end{equation}
To estimate the last integral, we will use inequality \eqref{disug13}, then H\"older's inequality with exponents $\Big(\frac{2^*}2\,,\,\frac{N}2\Big)$, and finally Sobolev's inequality:
\begin{align*}
\bigg(\intO \pare{\eps+\av{u_n}}^\frac{(1+\lm)q-2}{2-q} |u_n|^\frac{4}{2-q}\,dx\bigg)^\frac{2-q}{2}
&\le c\,\bigg(\int_{\Omega}\Psi_\eps(u_n)^2 (\eps+|u_n|)^\frac{2\lambda}{2-q} |u_n|^\frac{2q}{2-q} \,dx\bigg)^\frac{2-q}{2}\\
&\le c\,\bigg(\int_{\Omega}\Psi_\eps(u_n)^{2^*}\,dx\bigg)^\frac{2-q}{2^*}
\bigg(\int_{\Omega} (\eps+|u_n|)^\frac{N\lambda}{2-q} |u_n|^\frac{Nq}{2-q} \,dx\bigg)^\frac{2-q}{N}\\
&\le c\,\bigg(\int_{\Omega}\frac{\av{\N u_n }^2}{\pare{\eps+\av{u_n}}^{1+\lm}}\,dx\bigg)^\frac{2-q}{2}
\bigg(\int_{\Omega} (\eps+|u_n|)^\frac{N\lambda}{2-q} |u_n|^\frac{Nq}{2-q} \,dx\bigg)^\frac{2-q}{N}.
\end{align*}
Thus, recalling the value of $\lm$, it follows from \eqref{inter1} that
$$
\frac{1}{\eps^\lm}\,\int_{\Omega} \frac{\av{\N u_n}^q |u_n|^2}{\eps+|u_n|}\,dx
\le \frac{c}{\eps^{\lm(1-\frac{2-q}N)}}\bigg(\int_{\Omega}\frac{\av{\N u_n }^2}{\pare{\eps+\av{u_n}}^{1+\lm}}\,dx\bigg)
\bigg(\int_{\Omega} \frac{1}{\eps^\lm}\,(\eps+|u_n|)^{1-\frac{Nq}{2-q}} |u_n|^\frac{Nq}{2-q} \,dx\bigg)^\frac{2-q}{N}.
$$
Now, it is easy to prove from \eqref{disug12} (see also Remark \ref{noteasyatall}) that
\begin{equation}\label{eq:itiseasytoprove}
    \frac{1}{\eps^\lm}\,(\eps+|u_n|)^{1-\frac{Nq}{2-q}} |u_n|^\frac{Nq}{2-q} \le c\pare{ \frac{1}{\eps^\lm}\frac{\av{u_n}^2}{\eps+\av{u_n}}+\eps^{1-\lm}}\le c(\Phi_\eps(u_n)+\eps^{1-\lambda})\,,
\end{equation}
therefore
\begin{align*}
\frac{1}{\eps^\lm}\,\int_{\Omega} \frac{\av{\N u_n}^q |u_n|^2}{\eps+|u_n|}\,dx
&\le \frac{c}{\eps^{\lm(1-\frac{2-q}N)}}\bigg(\int_{\Omega}\frac{\av{\N u_n }^2}{\pare{\eps+\av{u_n}}^{1+\lm}}\,dx\bigg)
\bigg(\int_{\Omega} (\Phi_\eps(u_n)+\eps^{1-\lambda}) \,dx\bigg)^\frac{2-q}{N}\\
&\le \frac{c}{\eps^{\lm(1-\frac{2-q}N)}}\bigg(\int_{\Omega}\frac{\av{\N u_n }^2}{\pare{\eps+\av{u_n}}^{1+\lm}}\,dx\bigg)
\bigg[\bigg(\int_{\Omega} \Phi_\eps(u_n)\,dx\bigg)^\frac{2-q}{N} +|\Omega|^{\frac{2-q}{N}} \eps^\frac{(1-\lambda)(2-q)}{N}\bigg]\,.
\end{align*}
Integrating, we have proved that
$$
\frac1{\eps^\lambda}\iint_{Q_t} \frac{\av{\N u_n}^q |u_n|^2}{\eps+|u_n|}\,dx\,d\tau
\le c \bigg(\int_{\Omega}\frac{\av{\N u_n }^2}{\pare{\eps+\av{u_n}}^{1+\lm}}\,dx\bigg)
\bigg[\frac{1}{\eps^\alpha}\sup_{\tau\in[0,t]}\bigg(\int_{\Omega} \Phi_\eps(u_n(\tau))\,dx\bigg)^\frac{2-q}{N}+\eps^{\beta}\bigg],
$$
where $\alpha= \lm\Big(1-\frac{2-q}N\Big)>0$ and $\beta=\frac{(1-\lambda)(2-q)}{N}>0$.

We summarize the previous computations:
\begin{align*}
    &\int_\Omega \Phi_\eps(u_n(t))\,dx + \bigg[ \lambda - \frac{C_1}{\eps^\alpha}\sup_{\tau\in[0,t]}\bigg(\int_{\Omega} \Phi_\eps(u_n(\tau))\,dx\bigg)^\frac{2-q}{N}-C_1\,\eps^{\beta}\bigg] \iint_{Q_t} \frac{\av{\N u_n }^2}{\pare{\eps+\av{u_n}}^{1+\lm}} \,dx\,d\tau\\
  & \le \frac{C_2}{\eps^\lambda} \iint_{Q_t}\av{ f} \,dx\,d\tau+\intO \Phi_\eps(u^0)\,dx.
\end{align*}
We first fix $\eps\in(0,1)$ such that $C_1\eps^\beta<\frac\lambda4$ and then $\delta>0$  such that
$$
    \frac{C_1}{\eps^\alpha}\, \delta^{\frac{2-q}{N}}< \frac\lambda4\,.
$$
Finally we assume that the data $f$ and $u^0$ are small enough:
\begin{equation}\label{eq:smallness-1}
    \frac{C_2}{\eps^\lambda} \iint_{Q_T}\av{ f} \,dx\,d\tau+\intO \Phi_\eps(u^0)\,dx\le\frac\delta2\,.
\end{equation}
This smallness condition allows us to conclude the proof reasoning as in the previous a priori estimates results.
\end{proof}

\begin{remark}\label{noteasyatall}
\normalfont We here provide the reader with some arguments aimed at proving \eqref{eq:itiseasytoprove}, i.e.
\begin{equation*}
(\eps+s)^{1-\frac{Nq}{2-q}} s^\frac{Nq}{2-q} \le c\pare{ \frac{s^2}{\eps+s}+\eps}\,\qq\forall s\ge0.
\end{equation*}
This inequality is easy to prove when $\eps=1$, indeed, the functions on the left and on the right hand side have the same behavior as $s\to+\infty$, while it is obvious when $s=0$. \\
The proof of the general case with $\eps\ne1$ follows by a homogeneity argument.
\end{remark}

\subsection{The case $\frac{4}{N+2}\le q< 2$: compactness and end of the proof} \label{ssec:cmptL2}

In this Section we assume that $ \frac4{N+2} \le q <2$ and that $\si= \frac{Nq}{2-q} \ge 2$. Starting from the estimates found in Proposition \ref{prop:ape}, we can prove the following compactness result:

\begin{proposition}[Compactness of the approximating sequence]\label{prop:cmpt}
The sequence of the approximate solutions \eqref{eq:pbn} satisfies, up to subsequences,
\begin{align}
u_n&\to u &&\t{strongly in  $L^2(Q_T)$  and a.e.\ in $Q_T$,}\label{eq:un-Leta}\\
T_k(u_n)&\to T_k(u) && \t{strongly in } L^2(0,T;H_0^1(\Omega)),\label{eq:conv-Tk}\\
\N u_n&\to \N u && \t{a.e. in } Q_T,\label{eq:Nun-ae}\\
T_n\pare{H(t,x,u_n,\N u_n)}&\to H(t,x,u,\N u) && \t{strongly in } L^1(Q_T),\label{eq:rhs-conv}\\
a(t,x,u_n, \N u_n)&\to  a(t, x, u_n, \N u) && \t{a.e.\ in $Q_T$ and weakly in $(L^2(Q_T))^N$,}\label{eq:Nun-w} \\
u_n&\to  u && \t{strongly in $C([0,T]; L^1(\Omega))$.}\label{convCL1}
\end{align}
\end{proposition}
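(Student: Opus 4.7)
The strategy closely mirrors that of Proposition \ref{convS}, with the $L^1$-bound on $\{|\nabla u_n|^q |u_n|^\sigma\}$ furnished by Proposition \ref{prop:ape} playing the role of the estimate \eqref{altrastimapriori0}. From \eqref{stimaenergia} the sequence $\{u_n\}$ is bounded in $L^2(0,T;H^1_0(\Omega))$, so passing to a subsequence I get $\nabla u_n\rightharpoonup\nabla u$ weakly in $(L^2(Q_T))^N$. The central step is to show equi-integrability of $\{T_n(H(t,x,u_n,\N u_n))\}$ in $L^1(Q_T)$. For measurable $E\subseteq Q_T$ and any $k>0$, I would split
\[
\iint_E |T_n(H(\tau,x,u_n,\N u_n))|\,dx\,d\tau \le k\iint_E |\N u_n|^q\,dx\,d\tau + k^{1-\sigma}\iint_{Q_T}|u_n|^\sigma |\N u_n|^q\,dx\,d\tau + \iint_E |f|\,dx\,d\tau.
\]
By H\"older the first term is controlled by $Ck|E|^{(2-q)/2}$ via the $L^2$-bound on $\N u_n$; the second is bounded by $Ck^{1-\sigma}$ and vanishes as $k\to\infty$ since $\sigma\ge 2>1$; the last is equi-integrable because $f\in L^1(Q_T)$. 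Choosing $k$ large first and then $|E|$ small yields the claim, and in particular the uniform $L^1(Q_T)$-bound on $\{T_n(H(\tau,x,u_n,\N u_n))\}$.

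With this bound, $\{\pat u_n\}$ lies in a bounded set of $L^2(0,T;H^{-1}(\Omega))+L^1(Q_T)$. The Aubin-Simon result \cite[Corollary 4]{Simon} delivers $u_n\to u$ strongly in $L^2(Q_T)$ and, after a further extraction, a.e.\ in $Q_T$, which is \eqref{eq:un-Leta}. Next, using equi-integrability I extract via Dunford-Pettis a weak-$L^1$ limit of $\{T_n(H(\tau,x,u_n,\N u_n))\}$, and apply \cite[Theorem 4.1]{BM} (which requires precisely a uniformly $L^1$-bounded right-hand side and boundedness of $\{u_n\}$ in $L^2(0,T;H^1_0(\Omega))$) to obtain the strong convergence $\nabla T_k(u_n)\to \nabla T_k(u)$ in $(L^2(Q_T))^N$ for every $k>0$, which is \eqref{eq:conv-Tk}. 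A standard diagonal argument combining \eqref{eq:un-Leta} with this strong convergence of truncations provides the a.e.\ convergence $\N u_n\to \N u$ in $Q_T$, that is \eqref{eq:Nun-ae}.

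Once \eqref{eq:un-Leta}--\eqref{eq:Nun-ae} are at hand, the Carath\'eodory continuity of $a$ and $H$ gives the a.e.\ convergences in \eqref{eq:rhs-conv} and \eqref{eq:Nun-w}. Combined with the equi-integrability already proved, Vitali's theorem upgrades the convergence of the nonlinear right-hand side to strong $L^1(Q_T)$-convergence, establishing \eqref{eq:rhs-conv}. For the Leray-Lions field, \eqref{A2} together with the bounds in \eqref{stimaenergia} gives a uniform $L^2(Q_T)$-bound on $a(t,x,u_n,\N u_n)$, which coupled with the a.e.\ convergence yields the weak $(L^2(Q_T))^N$ convergence in \eqref{eq:Nun-w}. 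Finally, \eqref{convCL1} follows by adapting the argument in \cite[Proposition 6.4]{DGLS}, exactly as was done at the end of Proposition \ref{convS}.

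The main obstacle will be the equi-integrability step: unlike Proposition \ref{convS}, where the exponential test function $\varphi(u_n)$ delivered direct $L^1$-control of $H\,\varphi(u_n)$, here I must exploit precisely the fact that $\sigma\ge 2$ in the range $\frac{4}{N+2}\le q<2$ so that $k^{1-\sigma}\to0$, coupled with the $L^1$-bound on $|u_n|^\sigma|\N u_n|^q$, in order to control the tail on $\{|u_n|>k\}$. Everything else is by now fairly standard once this equi-integrability is secured.
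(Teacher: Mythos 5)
Your proposal is correct and follows essentially the same route as the paper's proof: the identical splitting on $\{|u_n|\le k\}$ and $\{|u_n|>k\}$ using the $L^1$-bound on $|\nabla u_n|^q|u_n|^\sigma$ and the fact that $k^{1-\sigma}\to 0$, then Aubin--Simon compactness, Dunford--Pettis, strong convergence of truncation gradients, a.e.\ convergence of the gradients, Vitali, and the $C([0,T];L^1(\Omega))$ argument from \cite{DGLS}. The only cosmetic difference is that you invoke \cite[Theorem 4.1]{BM} for the convergence of $\nabla T_k(u_n)$ where the paper cites \cite[Theorem 2.1]{P99}; both serve the same purpose.
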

\begin{proof}
\noindent
{\it The strong convergence of $u_n$ in $L^2(Q_T)$.} Since $\set{\av{\N u_n}}$ is uniformly bounded in $L^2(Q_T)$, we can apply \cite[Corollary 4]{Simon} with  $p=2$, $X=H_0^1(\Omega)$, $B=L^2(\Omega)$, $Y=W^{-1,s'}(\Omega)$ for some $s>N$.

\noindent
{\it The a.e.\ convergence of $u_n$.}
It follows directly, up to a new subsequence,  from the convergence in $L^2$.

\noindent
{\it The equi-integrability of the right hand side.}
It is enough to prove the equi-integrability of $\set{|\N u_n|^q|u_n|}$.\\
Let $E\subset Q_T$ be a measurable set. Having in mind Proposition \ref{prop:ape}, we may write
\begin{align*}
  \iint_E|\N u_n|^q|u_n|\, dx\, d\tau &=\iint_{E\cap\{|u_n|\le k\}}|\N u_n|^q|u_n|\, dx\, d\tau+\iint_{E\cap\{|u_n|> k\}}|\N u_n|^q|u_n|\, dx\, d\tau \\
 & \le k\iint_{E}|\N u_n|^q\, dx\, d\tau+\iint_{\{|u_n|> k\}}|\N u_n|^q|u_n|\, dx\, d\tau \\
 & \le k\left(\iint_{\Omega}|\N u_n|^2\, dx\, d\tau\right)^{\frac q 2}|E|^{1-\frac q2}+\frac1{k^{\sigma-1}}\iint_{Q_T}|\N u_n|^q|u_n|^\sigma\, dx\, d\tau\\
 & \le kc|E|^{1-\frac q2}+\frac c{k^{\sigma-1}}\,, \quad \forall k>0\,.
\end{align*}
Next choose $\eps>0$ and fix $k$ large enough to get $\frac c{k^{\sigma-1}}<\frac\eps2$. Finally find $\delta>0$ such that $|E|<\delta$ implies $kc|E|^{1-\frac q2}<\frac\eps2$. The equi-integrability of $\set{|\N u_n|^q|u_n|}$ follows.

\noindent
{\it The strong convergence of $T_k(u_n)$ in $L^2(0,T;H_0^1(\Omega)) $ for all $k>0$.} By the Dunford--Pettis Theorem, the previous step yields, up to subsequences,
the weak convergence of the right hand side in $L^1(Q_T)$. This fact allows us to apply \cite[Theorem 2.1]{P99}.

\noindent
{\it The a.e.\ convergence of $\N u_n$.}
The previous step implies the convergence of $\N u_n$ in measure, so that we deduce the pointwise convergence.

\noindent
{\it The a.e.\ convergence of $a(t,x,u_n,\N u_n)$ and $T_n(H(t,x,u_n, \N u_n))$.}
They are consequences of the a.e.\ convergence of $u_n$ and $\N u_n$.

\noindent
{\it The strong convergence of the right hand side in $L^1(Q_T)$.}
This follows from an application of the Vitali Theorem.

\noindent
{\it The weak convergence of $a(t,x,u_n, \N u_n)$ in $L^2(Q_T) $.}
It follows from the a.e convergence and the uniform bound of $\set{a(t,x,u_n, \N u_n)}$ in $L^2(Q_T) $, due to the uniform bound of $\big\{|\N u_n|^2\big\}$ and our assumptions.

\noindent{\it The strong convergence of $u_n$ in $C([0,T]; L^1(\Omega))$.} This can be proved in a similar way as in \cite[Proposition 6.4]{DGLS}.
\end{proof}

Now, for $t\in (0,T]$, let $v(\tau,x)$ be a function such that
$v\in L^2(0,t;H_0^1(\Omega))\cap L^\infty(Q_t)\cap C([0,t];L^2(\Omega))$, $\pa_\tau v\in L^2(0,t;H^{-1}(\Omega))$. Then, using the convergences proved in Proposition \ref{prop:cmpt}, it is easy to pass to the limit in each integral of the weak formulation \eqref{eq:sol-pbn}. Therefore $u$ is a solution in the sense of Definition \ref{defsol}.

\noindent{\it Further regularity of $u$.}
Using the estimates of Proposition \ref{prop:ape} and the convergences just proved in Proposition \ref{prop:cmpt}, we obtain that
\[
u\in L^\infty(0,T;L^\sigma(\Omega))\,,\qquad u^{\sigma/2}\in L^2(0,T;H_0^1(\Omega))\,,\qquad
\av{\N u}^q |u|^\sigma\in L^1(Q_T).\]
To conclude the proof, we want to show that $u\in C([0,T];L^\si(\Omega))$.
On account of $u\in L^\infty(0,T;L^\sigma(\Omega))$, we only need to show that the function
given by $\xi(t)=\int_\Omega |u(t,x)|^\sigma dx$ is continuous in $[0,T]$.\\
We begin by fixing $t\in [0,T]$ and taking $|T_k(u_n)
|^{\sigma-2}T_k(u_n)$ as a test function in \eqref{ftestu'} to get
\begin{multline}\label{eq:cont1}
\int_\Omega\Xi_k(u_n(t,x))\, dx-\int_\Omega\Xi_k(T_n(u^0(x)))\, dx
+(\sigma-1)\iint_{Q_t} |T_k(u_n)|^{\sigma-2} a(\tau,x,u_n,\N u_n)\cdot \nabla T_k(u_n)\,
dx\, d\tau\\
=\iint_{Q_t} T_n(H(\tau, x, u_n, \N u_n))|T_k(u_n)|^{\sigma-2}T_k(u_n)\, dx\, d\tau\,,
\end{multline}
where $\Xi_k(s)=\int_0^s |T_k(\eta)|^{\sigma-2}T_k(\eta)\, d\eta$.\\
It is not difficult to pass to the limit in $n$. Indeed, the first two terms pass to
the limit since $u_n(t,\cdot)\to u(t,\cdot)$ (by \eqref{convCL1}) and $T_n(u^0)\to u^0$ in $L^1(\Omega)$. Regarding the third term, it can be written as
\[\iint_{Q_t} |T_k(u_n)|^{\sigma-2} a(\tau,x,u_n,\N T_k(u_n))\cdot \nabla T_k(u_n)\,
dx\, d\tau\,.\]
Having in mind \eqref{eq:conv-Tk}, we deduce that both $\nabla T_k(u_n)$ and $a(
\tau,x,u_n,\N T_k(u_n))$ strongly converge in $(L^2(Q_T))^N$  by \eqref{A2}. Since the remaining factor is bounded,
we may let $n$ go to infinity. The limit on right hand side  is a consequence of
\eqref{eq:rhs-conv} and Lebesgue's theorem. Hence,
\eqref{eq:cont1} becomes
\begin{multline}\label{eq:cont2}
\int_\Omega\Xi_k(u(t,x))\, dx
+(\sigma-1)\iint_{Q_t} |T_k(u)|^{\sigma-2} a(\tau,x,u,\N u)\cdot \N u\,
dx\, d\tau\\
=\int_\Omega\Xi_k(u^0(x))\, dx+\iint_{Q_t} H(\tau, x, u, \N u)|T_k(u)|^{\sigma-2}T_k(u)
\, dx\, d\tau\,.
\end{multline}
Observe that our assumptions on $u^0$ and $f$ along with $\av{\N u}^q |u|^\sigma\in L^1(
Q_T)$ allow us to pass to the limit as $k$ tends to infinity in the right hand side (by
Lebesgue's theorem), and in the left hand side (thanks to \eqref{A2} and since   $u\in L^\infty(0,T;L^\sigma(\Omega))$ and $|u|^{\sigma/2}\in L^2(0,T;H_0^1(\Omega))$).
Therefore, we may let $k$ go to infinity in \eqref{eq:cont2}, obtaining
\begin{multline*}
\frac1\sigma\int_\Omega |u(t,x)|^\sigma\, dx\\
=\frac1\sigma\int_\Omega |u^0(x)|^\sigma\, dx-(\sigma-1)\iint_{Q_t} |u|^{\sigma-2} a(
\tau,x,u,\N u)\cdot \N u\, dx\, d\tau
+\iint_{Q_t} H(\tau, x, u, \N u)|u|^{\sigma-2}u\, dx\, d\tau\,.
\end{multline*}
Using function $\xi$, we have proven that
\[\xi(t)=\xi(0)-\sigma(\sigma-1)\iint_{Q_t} |u|^{\sigma-2} a(\tau,x,u,\N u)\cdot \N u\,
dx\, d\tau
+\sigma\iint_{Q_t} H(\tau, x, u, \N u)|u|^{\sigma-2}u\, dx\, d\tau\,,\]
and so the continuity of the right hand side implies that of $\xi$.

We finally point out that we may take $t=T$ and $T_k(u_n)$ as test function instead of $\av{T_k(u_n)}^{\si-2}T_k(u_n)$, and follow
the same argument to arrive at the energy identity \eqref{energy}.

\subsection{The case $\frac{2}{N+1}<q<\frac{4}{N+2}$: compactness and end of the proof}
In this Section we assume that $ \frac2{N+1} < q <\frac4{N+2}$ and that $\si= \frac{Nq}{2-q} \in (1,2)$. Now, Proposition \ref{prop:ape2} implies  the following compactness result:

\begin{proposition}[Compactness of the approximating sequence]\label{prop:cmpt2}
The sequence of the approximate solutions \eqref{eq:pbn} satisfies, up to subsequences,
\begin{align}
u_n&\to u &&\t{strongly in } L^\eta(Q_T)\t{ for } \eta=\frac q2 (N+2) \in\Big(\frac{N+2}{N+1}, 2\Big),\label{eq:un-Leta2}\\
u_n&\to u && \t{a.e. in } Q_T,\label{eq:un-ae2}\\
T_k(u_n)&\to T_k(u) && \t{strongly in } L^2(0,T;H_0^1(\Omega)),\label{eq:conv-Tk2}\\
\N u_n&\to \N u && \t{a.e. in } Q_T,\label{eq:Nun-ae2}\\
a(t,x,u_n, \N u_n)&\to a(t, x, u_n, \N u)&& \t{a.e. in } Q_T,\nonumber\\%
T_n\pare{H(t,x,u_n,\N u_n)}&\to H(t,x,u,\N u) && \t{strongly in } L^1(Q_T),\label{eq:rhs-conv2}\\
u_n&\to  u && \t{strongly in $C([0,T]; L^1(\Omega))$.}\label{convCL12}
\end{align}
\end{proposition}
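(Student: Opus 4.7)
The plan is to follow the same scheme as in Proposition~\ref{prop:cmpt}, but the core obstacle in the present regime $1<\sigma<2$ is that the sequence $\{u_n\}$ is no longer bounded in $L^2(0,T;H_0^1(\Omega))$; only the weighted estimate $\iint_{Q_T}\tfrac{|\nabla u_n|^2}{(1+|u_n|)^{2-\sigma}}\,dx\,d\tau\le C$ and the $L^{\sigma(N+2)/N}(Q_T)$--bound from Proposition~\ref{prop:ape2} are available. The first step is therefore to extract an unweighted $L^\eta$--bound on $\nabla u_n$ at exactly $\eta=q(N+2)/2$. I would use H\"older's inequality with exponents $(2/\eta,2/(2-\eta))$ to write
\[
\iint_{Q_T}|\nabla u_n|^\eta\,dx\,d\tau\le\bigg(\iint_{Q_T}\frac{|\nabla u_n|^2}{(1+|u_n|)^{2-\sigma}}\,dx\,d\tau\bigg)^{\!\eta/2}\bigg(\iint_{Q_T}(1+|u_n|)^{\eta(2-\sigma)/(2-\eta)}\,dx\,d\tau\bigg)^{\!(2-\eta)/2},
\]
and observe that the relation $\sigma=Nq/(2-q)$ makes the exponent $\eta(2-\sigma)/(2-\eta)$ reduce precisely to $\sigma(N+2)/N$. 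Equivalently, $\eta=\sigma(N+2)/(N+\sigma)$ is the critical value at which both factors are simultaneously controlled, giving a uniform bound of $\{u_n\}$ in $L^\eta(0,T;W_0^{1,\eta}(\Omega))$.

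Once this is in place, the strong convergence \eqref{eq:un-Leta2} follows from the Aubin--Simon compactness theorem \cite[Corollary~4]{Simon}: the growth bound \eqref{A2} gives $\{a(t,x,u_n,\nabla u_n)\}$ bounded in $L^\eta(Q_T)$, the source $\{T_n(H(t,x,u_n,\nabla u_n))\}$ is bounded in $L^1(Q_T)$ by Proposition~\ref{prop:ape2}, and together they place $\{\partial_t u_n\}$ in $L^1(0,T;W^{-1,s}(\Omega))$ for some $s>1$ with $L^\eta(\Omega)\hookrightarrow W^{-1,s}(\Omega)$, so that the compact embedding $W_0^{1,\eta}(\Omega)\hookrightarrow L^\eta(\Omega)$ closes the argument. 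Passing to a subsequence then yields \eqref{eq:un-ae2}.

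For \eqref{eq:conv-Tk2} I would first establish the equi-integrability of $\{T_n(H(t,x,u_n,\nabla u_n))\}$ in $L^1(Q_T)$ by means of the splitting
\[
\iint_E|\nabla u_n|^q|u_n|\,dx\,d\tau\le k\iint_E|\nabla u_n|^q\,dx\,d\tau+\frac{1}{k^{\sigma-1}}\iint_{Q_T}|\nabla u_n|^q|u_n|^\sigma\,dx\,d\tau,
\]
where the first term is small when $|E|$ is small thanks to the $L^\eta$--bound on $\nabla u_n$ (using $q<\eta$), while the second is small uniformly in $n$ for $k$ large by Proposition~\ref{prop:ape2}. Then \cite[Theorem~2.1]{P99} delivers the strong convergence of the truncations. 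The remaining convergences \eqref{eq:Nun-ae2}--\eqref{convCL12} follow exactly as in Proposition~\ref{prop:cmpt}: a.e.\ convergence of $\nabla u_n$ is obtained from that of $\nabla T_k(u_n)$ via a diagonal argument combined with \eqref{eq:un-ae2}; a.e.\ convergence of $a(t,x,u_n,\nabla u_n)$ and of $T_n(H(t,x,u_n,\nabla u_n))$ follows from the Carath\'eodory hypothesis; strong $L^1(Q_T)$ convergence of the source is then granted by Vitali's theorem and the equi-integrability just proved; and the $C([0,T];L^1(\Omega))$ convergence is obtained by adapting \cite[Proposition~6.4]{DGLS}. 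The main obstacle in the whole scheme is the initial interpolation step: the identity $\eta=q(N+2)/2=\sigma(N+2)/(N+\sigma)$ is critical, and without this precise matching neither the Aubin--Simon step nor the equi-integrability estimate would close.
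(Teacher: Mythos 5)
Your proposal is correct and follows essentially the same route as the paper: the same H\"older interpolation between the weighted gradient estimate and the $L^{\sigma(N+2)/N}(Q_T)$ bound to identify $\eta=\sigma\frac{N+2}{N+\sigma}=\frac q2(N+2)$, followed by Aubin--Simon compactness, the same two-level splitting for equi-integrability of $|\nabla u_n|^q|u_n|$, Porretta's theorem for the truncations, and Vitali plus the argument of \cite[Proposition 6.4]{DGLS} for the remaining convergences. No substantive difference from the paper's proof.
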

\begin{proof}
The proof is very similar to that of Proposition \ref{prop:cmpt}. We highlight the main differences.

\noindent
{\it The strong convergence of $u_n$ in $L^\eta(Q_T)$.}
We perform the following computations:
\begin{align*}
\iint_{Q_T}  \av{\N u_n}^\eta\,dx\,dt
&= \iint_{Q_T}  \pare{1+\av{u_n}}^{\frac\eta 2(\si-2)} \av{\N u_n}^\eta\pare{1+\av{u_n}}^{\frac\eta 2(2-\si)}\,dx\,dt \\
&\le \pare{\iint_{Q_T} \pare{1+\av{u_n}}^{\si-2}\av{\N u_n}^2\,dx\,dt}^\frac{\eta}{2}\pare{\iint_{Q_T}(1+ |u_n|)^{\frac{\eta}{2-\eta}(2-\si)}\,dx\,dt}^{\frac{2-\eta}{2}}.
\end{align*}
Recalling \eqref{cor-GN2}, we set $\eta$ such that
\[
\frac{\eta}{2-\eta}(2-\si)=\si\frac{N+2}{N}\q\Leftrightarrow\q \eta=\si\frac{N+2}{N+\si} =\frac q2 (N+2).
\]
We apply \cite[Corollary 4]{Simon} with  $p=\eta$, $X=W_0^{1,\eta}(\Omega)$, $B=L^\eta(\Omega)$, $Y=W^{-1,s'}(\Omega)$ for some $s>N$.

\noindent
{\it The equi-integrability of the right hand side.}
It is enough to prove the equi-integrability of $\set{|\N u_n|^q|u_n|}$.
Let $E\subset Q_T$ be a measurable set. Having in mind Proposition \ref{prop:ape2}  and the previous step (notice that $q<\eta$), we may write
\begin{align*}
  \iint_E|\N u_n|^q|u_n|\, dx\, dt &=\iint_{E\cap\{|u_n|\le k\}}|\N u_n|^q|u_n|\, dx\, dt+\iint_{E\cap\{|u_n|> k\}}|\N u_n|^q|u_n|\, dx\, dt \\
 & \le k\iint_{E}|\N u_n|^q\, dx\, dt+\iint_{\{|u_n|> k\}}|\N u_n|^q|u_n|\, dx\, dt \\
 & \le k\left(\iint_{\Omega}|\N u_n|^\eta\, dx\, dt\right)^{\frac q \eta}|E|^{1-\frac q\eta}+\frac1{k^{\sigma-1}}\iint_{\Omega}|\N u_n|^q|u_n|^\sigma\, dx\, dt\\
 & \le kc|E|^{1-\frac q\eta}+\frac c{k^{\sigma-1}}\,, \quad \forall k>0\,,
\end{align*}
and we can conclude as in the case $\si\ge2$.
\end{proof}

We now finish the proof of Theorem \ref{teo:small} in this range of $q$, by showing the entropy formulation.\\

\noindent
{\it Entropy formulation when $1<\si<2$}.
We want to prove that $u$ is an entropy solution, in the sense of Definition \ref{entrsol}.
To this aim, fix $k>0$, $t\in(0,T]$ and a function $v(\tau,x)$ such that
\[
v\in L^2(0,t;H_0^1(\Omega))\cap L^\infty(Q_t)\cap C([0,t];L^1(\Omega)),\q \pa_\tau v\in L^2(0,t;H^{-1}(\Omega))+L^1(Q_t).
\]
We need a suitable regularization with respect to  time of $T_k(u_n-v)$.\\
Following Landes and Mustonen \cite{L,LM}, for every $\nu\in\bN$, consider $\Phi_\nu\in C([0,t];H_0^1(\Omega))$ the solution to problem
\begin{equation*}
\left\{\begin{array}{ll}
\frac1\nu \partial_t\Phi_\nu+\Phi_\nu=T_k(u_n-v),\\[3mm]
\Phi_\nu(0,\cdot)\in H_0^1(\Omega),
\end{array}\right.
\end{equation*}
where $\Phi_\nu(0,\cdot)$ is a sequence such that $\|\Phi_\nu(0)\|_{L^\infty(\Omega)}\le k$ and converges to $T_k(T_n(u^0)-v(0,\cdot))$ strongly in $L^q(\Omega)$ for all $1\le q<\infty$. The main properties of function $\Phi_\nu$ are:
\begin{subequations}\label{eq:prop}
\begin{align}
&\|\Phi_\nu\|_\infty\le k && \hbox{ for all }\nu\in\bN,\label{eq:prop1}\\
&\Phi_\nu\to T_k(u_n-v) && \hbox{ strongly in }L^2(0,t;H_0^1(\Omega))\label{eq:prop2}.
\end{align}
\end{subequations}
Obviously, $\partial_\tau\Phi_\nu\in L^2(0,t;H_0^1(\Omega))$ and so this function can be taken as test function in \eqref{eq:sol-pbn}.
\begin{align*}
&\intO u_n(t,x)\Phi_\nu(t,x)\,dx-\intO T_n(u^0(x))\Phi_\nu(0,x)\,dx-\int_{0}^t\langle\pa_\tau \Phi_\nu, u_n\rangle\,d\tau
\\
&\q+\iint_{Q_t}a(\tau,x,u_n,\N u_n)\cdot\N \Phi_\nu\,dx\,d\tau
=\iint_{Q_t}T_n\pare{H(\tau,x,u_n,\N u_n)}\Phi_\nu\,dx\,d\tau.
\end{align*}
 Integrating by parts, we observe that
\[
\intO u_n(t,x)\Phi_\nu(t,x)\,dx-\intO T_n(u^0(x))\Phi_\nu(0,x)\,dx-\int_{0}^t\langle\partial_\tau \Phi_\nu, u_n\rangle\,d\tau=\int_{0}^t\langle\partial_\tau u_n, \Phi_\nu \rangle\,d\tau,
\]
wherewith we deduce
\begin{multline*}
\int_{0}^t\langle\partial_\tau (u_n-v), \Phi_\nu\rangle\,d\tau+\int_{0}^t\langle\partial_\tau v, \Phi_\nu\rangle\,d\tau
+\iint_{Q_t}a(\tau,x,u_n,\N u_n)\cdot\N \Phi_\nu\,dx\,d\tau\\
=\iint_{Q_t}T_n\pare{H(\tau,x,u_n,\N u_n)}\Phi_\nu\,dx\,d\tau\,.
\end{multline*}
Observe that we may let $\nu$ go to infinity in all these terms, due to \eqref{eq:prop1} and \eqref{eq:prop2}, obtaining
\begin{multline*}
\int_{0}^t\langle\partial_\tau (u_n-v), T_k(u_n-v)\rangle\,d\tau+\int_{0}^t\langle\partial_\tau v, T_k(u_n-v)\rangle\,d\tau\\
+\iint_{Q_t}a(\tau,x,u_n,\N u_n)\cdot\N T_k(u_n-v)\,dx\,d\tau
=\iint_{Q_t}T_n\pare{H(\tau,x,u_n,\N u_n)}T_k(u_n-v)\,dx\,d\tau\,.
\end{multline*}
We finally point out that the first term can be written as
\[
\int_{0}^t\langle\partial_\tau (u_n-v), T_k(u_n-v)\rangle\,d\tau
=\intO \Theta_k(u_n(t,x)-v(t,x))\, dx-\intO \Theta_k(T_n(u^0(x))-v(0,x))\, dx,
\]
with $\Theta_k(s)=\int_0^sT_k(\sigma)\, d\sigma$. Therefore, we have arrived at
\begin{multline*}
\intO \Theta_k(u_n(t,x)-v(t,x))\, dx-\intO \Theta_k(T_n(u^0(x))-v(0,x))\, dx+\int_{0}^t\langle\partial_\tau v, T_k(u_n-v)\rangle\,d\tau\\
+\iint_{Q_t}a(\tau,x,u_n,\N u_n)\cdot\N T_k(u_n-v)\,dx\,d\tau
=\iint_{Q_t}T_n\pare{H(\tau,x,u_n,\N u_n)}T_k(u_n-v)\,dx\,d\tau\,.
\end{multline*}
We now wish to pass to the limit for $n\to\infty$.
We have that
\begin{align*}
\intO \Theta_k(u_n(t)-v(t))\,dx&\to \intO \Theta_k(u(t)-v(t))\,dx,\\
\intO \Theta_k(T_n(u^0)-v(0))\,dx&\to \intO \Theta_k(u^0-v(0))\,dx,
\end{align*}
because of the convergence in $C([0,T];L^1(\Omega))$.\\
The limit
\[
\int_0^t\langle\partial_\tau v,T_k(u_n-v)\rangle \,d\tau\to \int_0^t\langle\partial_\tau v,T_k(u-v)\rangle \,d\tau
\]
can be obtained observing that
\[
T_k(u_n-v)=T_k(T_{k+M}(u_n)-v),\qq M=\norm{v}_{L^\infty(Q_T)},
\]
and using the convergence in \eqref{eq:conv-Tk}.\\
We now deal with
\[
\iint_{Q_t}a(\tau,x,u_n,\N u_n)\cdot\N T_k(u_n-v)\,dx\,d\tau
\]
in the following way. We first rewrite it as
\begin{align*}
&\iint_{Q_t}a(\tau,x,u_n,\N u_n)\cdot\N T_k(u_n-v)\,dx\,d\tau\\
&=\iint_{Q_t}  a(\tau,x,T_{k+M}(u_n),\N T_{k+M}(u_n))\cdot\N T_k(T_{k+M}(u_n)-v)\,dx\,d\tau\\
&=\iint_{Q_t}a(\tau,x,T_{k+M}(u_n),\N T_{k+M}(u_n))\cdot \N T_{k+M}(u_n) \chi_{\set{\av{T_{k+M}(u_n)-v }\le k}}\,dx\,d\tau\\
&\q-
\iint_{Q_t}a(\tau,x,T_{k+M}(u_n),\N T_{k+M}(u_n))\cdot\N v \chi_{\set{\av{T_{k+M}(u_n)-v }\le k}}\,dx\,d\tau.
\end{align*}
 The regularity assumptions on $v$, \eqref{eq:A}, \eqref{eq:un-ae2} and \eqref{eq:conv-Tk2} imply
\begin{align*}
&\iint_{Q_t}a(\tau,x,T_{k+M}(u_n),\N T_{k+M}(u_n))\cdot\N v\, \chi_{\set{\av{T_{k+M}(u_n)-v }\le k}}\,dx\,d\tau\\
&\qq\to \iint_{Q_t} a(\tau,x,T_{k+M}(u),\N T_{k+M}(u))\cdot\N v\, \chi_{\set{\av{T_{k+M}(u)-v }\le k}}\,dx\,d\tau,\\
&\iint_{Q_t}a(\tau,x,T_{k+M}(u_n),\N T_{k+M}(u_n))\cdot \N T_{k+M}(u_n)\,\chi_{\set{\av{T_{k+M}(u_n)-v }\le k}}\,dx\,d\tau\\
&\qq\to \iint_{Q_t}a(\tau,x,T_{k+M}(u),\N T_{k+M}(u))\cdot \N T_{k+M}(u)\,\chi_{\set{\av{T_{k+M}(u)-v }\le k}}\,dx\,d\tau.
\end{align*}
Finally,
\begin{align*}
\iint_{Q_t}T_n\pare{H(\tau,x,u_n,\N u_n)} T_k(u_n-v)\,dx\,d\tau&\to\iint_{Q_t}H(\tau,x,u,\N u) T_k(u-v)\,dx\,d\tau,
\end{align*}
thanks to   \eqref{eq:rhs-conv2}.

This  concludes the proof.

\subsection{The cases $q= \frac{2}{N+1} $, $0<q< \frac{2}{N+1}$: compactness and end of the proof}

Before addressing the convergence in the case $\si=1$, we need the following technical result.

\begin{lemma}\label{lem:mar}
The bounds
\begin{equation*}
\|v\|_{L^\infty(0,T;L^1(\Omega))}\le B\qq\t{and}\qq \iint_{Q_T} |\N T_k(v)|^2\,dx\,d\tau\le kB\qq\forall k>0
\end{equation*}
imply that there exist constants $C_i$ (only depending on $B$, $N$, $T$ and $|\Omega|$) such that
\begin{align*}
|\{(t,x)\in Q_T\>:\> |v|>k\}|&\le C_1 k^{-\frac{N+2}N},\\
|\{(t,x)\in Q_T\>:\> |\N v|>h\}|&\le C_2 h^{-\frac{N+2}{N+1}},\\
|\{(t,x)\in Q_T\>:\> |\N v|^q|v|>\lambda\}|&\le C_3 \lambda^{-\frac{N+2}{q(N+1)+N}},
\end{align*}
for all $k,\,h,\,\lm>0$.  
Therefore, we have three estimates in the Marcinkiewicz spaces (see \cite{BBV}):
\[
 v\in M^{\frac{N+2}{N}}(Q_T),\qq\av{\N v}\in M^{\frac{N+2}{N+1}}(Q_T)\qq\hbox{and}\qq
|\N v|^q v\in M^{\frac{N+2}{q(N+1)+N}}(Q_T).
\]
\end{lemma}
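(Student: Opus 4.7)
The three bounds are classical Marcinkiewicz-type estimates in the spirit of Boccardo–Gallou\"et and \cite{BBV}. The strategy is to obtain them in the listed order, since each subsequent bound uses the previous one via a level-set decomposition and an optimization in an auxiliary parameter.

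\emph{Step 1 (estimate on $v$).} I would apply the Gagliardo--Nirenberg inequality of Proposition \ref{teo:GN} to $T_k(v)$, with parameters $\eta=2$ and $h=1$. Since $|T_k(v)|\le |v|$, we have $\|T_k(v)\|_{L^\infty(0,T;L^1(\Omega))}\le B$, and by assumption $\iint_{Q_T}|\nabla T_k(v)|^2\,dx\,d\tau\le kB$. Proposition \ref{teo:GN} then yields
\[
\iint_{Q_T} |T_k(v)|^{\frac{2(N+1)}{N}}\,dx\,d\tau \le c\,B^{2/N}\iint_{Q_T}|\nabla T_k(v)|^2\,dx\,d\tau \le c\,k\,B^{(N+2)/N}.
\]
Since $|T_k(v)|=k$ on $\{|v|>k\}$, Chebyshev gives $k^{2(N+1)/N}|\{|v|>k\}|\le c\,k\,B^{(N+2)/N}$, which rearranges to the first bound with $C_1=C_1(B,N,|\Omega|,T)$.

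\emph{Step 2 (estimate on $\nabla v$).} For any auxiliary parameter $k>0$ I would use the inclusion
\[
\{|\nabla v|>h\} \subseteq \{|v|>k\}\cup\{|\nabla T_k(v)|>h\},
\]
and bound the first set by Step 1 and the second by Chebyshev on $L^2$: $|\{|\nabla T_k(v)|>h\}|\le h^{-2}\iint_{Q_T}|\nabla T_k(v)|^2\le kB/h^2$. This yields $|\{|\nabla v|>h\}|\le C_1 k^{-(N+2)/N}+kB h^{-2}$. Minimizing in $k$ leads to the choice $k\sim h^{N/(N+1)}$, which balances the two terms and produces the exponent $-(N+2)/(N+1)$.

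\emph{Step 3 (estimate on $|\nabla v|^q|v|$).} I would decompose again, this time splitting according to whether $|v|$ is large or $|\nabla v|$ is large: if $|\nabla v|^q|v|>\lambda$ and $|v|\le k$, then necessarily $|\nabla v|>(\lambda/k)^{1/q}$. Thus
\[
\{|\nabla v|^q|v|>\lambda\}\subseteq \{|v|>k\}\cup\{|\nabla v|>(\lambda/k)^{1/q}\},
\]
and applying Steps 1 and 2 gives $|\{|\nabla v|^q|v|>\lambda\}|\le C_1 k^{-(N+2)/N}+C_2(\lambda/k)^{-(N+2)/(q(N+1))}$. The only nontrivial piece is the optimization in $k$: setting the two exponents of $k$ equal yields $k\sim \lambda^{N/(N+q(N+1))}$, and substituting back gives precisely $\lambda^{-(N+2)/(q(N+1)+N)}$. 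The Marcinkiewicz embedding statements then follow from the standard definition of weak $L^p$ spaces.

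The only slightly delicate step is the bookkeeping of the optimization exponent in Step 3, but it is purely algebraic. No compactness or passage-to-the-limit arguments are needed: the proof is a direct consequence of Gagliardo--Nirenberg applied to $T_k(v)$ together with two level-set decompositions.
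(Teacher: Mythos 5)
Your proposal is correct, and it is more self-contained than the paper's argument. The paper simply cites \cite[Theorem 3.6]{AMST} (see also \cite[Lemma B1]{M}) for the first two Marcinkiewicz bounds and only proves the third one; you instead derive all three from the stated hypotheses. Your Step 1 (Gagliardo--Nirenberg applied to $T_k(v)$ with $\eta=2$, $h=1$, giving $\iint_{Q_T}|T_k(v)|^{2(N+1)/N}\le c\,k\,B^{(N+2)/N}$, then Chebyshev on the set where $|T_k(v)|=k$) and Step 2 (the inclusion $\{|\N v|>h\}\subseteq\{|v|>k\}\cup\{|\N T_k(v)|>h\}$ plus optimization in $k$) are exactly the standard proofs of the cited results, and the exponent bookkeeping checks out ($k\sim h^{N/(N+1)}$ balances $k^{-(N+2)/N}$ against $kBh^{-2}$). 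For the third bound, your decomposition $\{|\N v|^q|v|>\lambda\}\subseteq\{|v|>k\}\cup\{|\N v|>(\lambda/k)^{1/q}\}$ is the same as the paper's $\{|\N v|>h\}\cup\{|v|>\lambda/h^q\}$ under the substitution $k=\lambda/h^q$; optimizing over $k$ or over $h$ gives the same exponent $-(N+2)/(q(N+1)+N)$, as you computed. The only thing your write-up buys beyond the paper is independence from the external reference; the only thing it costs is the (routine) verification that the Gagliardo--Nirenberg parameters $y=w=2(N+1)/N$ satisfy the admissibility constraints of Proposition \ref{teo:GN}, which they do since $2\le 2(N+1)/N\le 2^*$ for $N\ge 3$.
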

\begin{proof}
The first two regularities follow from \cite[Theorem 3.6]{AMST} (see also \cite[Lemma B1]{M}).\\
To see the third one, observe that
\[\{(t,x)\in Q_T\>:\>|\N v|^q|v|>\lambda\}\subset \{(t,x)\in Q_T\>:\>|\N v|>h\} \cup \{|v|>\lambda/h^q\}\]
since $|\N v(t,x)|\le h$ and $|v(t,x)|\le \lambda/h^q$ imply $|\N v(t,x)|^q|v(t,x)|\le\lambda$. Thus,
\begin{align*}
| \{(t,x)\in Q_T\>:\>|\N v|^q|v|>\lambda\}|&\le |\{(t,x)\in Q_T\>:\>|\N v|>h\}| + |\{|v|>\lambda/h^q\}|\le \frac{C_2}{h^{\frac{N+2}{N+1}}}+\frac{C_1\, h^{q\frac{N+2}{N}}}{\lambda^{\frac{N+2}{N}}}\,.
\end{align*}
Minimizing in $h$, we get $h=c \lambda^{\frac{N+1}{q(N+1)+N}}$, from where the estimate follows.
\end{proof}

\begin{proposition}[The cases $q= \frac{2}{N+1} $ \& $0<q< \frac{2}{N+1}$ and $\si=1$]\label{prop:cmpt-om}
We have that, up to subsequences,
\begin{align*}
u_n&\to u &&\t{strongly in } L^{\frac{N+2}{N+1}}(Q_T)\t{ when }q= \frac{2}{N+1},\\%
u_n&\to u &&\t{strongly in } L^\eta(Q_T),\,\eta<\frac{N+2}{N+1},\,0<q< \frac{2}{N+1},\\
T_k(u_n)&\to T_k(u) && \t{strongly in } L^2(0,T;H_0^1(\Omega)) ,\\
u_n&\to u && \t{a.e. in } Q_T,\\
\N u_n&\to \N u && \t{a.e. in } Q_T,\\
a(t,x,u_n,\N u_n)&\to a(t,x,u,\N u) && \t{a.e. in } Q_T,\nonumber\\
T_n\pare{H(t,x,u_n,\N u_n)}&\to H(t,x,u,\N u) && \t{strongly in } L^1(Q_T),\\
u_n&\to  u && \t{strongly in $C([0,T]; L^1(\Omega))$.}
\end{align*}
\end{proposition}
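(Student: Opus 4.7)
The plan is to mirror the strategy of Propositions \ref{prop:cmpt} and \ref{prop:cmpt2}, adapting each step to the low-regularity setting $\sigma=1$ where the solution has infinite energy. The starting data are the a priori estimates of Propositions \ref{prop:ape-om} and \ref{prop:ape-1}, together with Corollary \ref{cor-GN3}, which provide $\{u_n\}$ bounded in $L^\infty(0,T;L^1(\Omega))$, $\{T_k(u_n)\}$ bounded in $L^2(0,T;H^1_0(\Omega))$ for every $k>0$, $\{|\nabla u_n|^q|u_n|\}$ bounded in $L^1(Q_T)$, and crucially in the critical case $\{|\nabla u_n|^q|u_n|(\log(1+|u_n|))^{(N+1)/2}\}$ bounded in $L^1(Q_T)$. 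Applying Lemma \ref{lem:mar}, I would deduce the Marcinkiewicz bounds on $u_n$, $\nabla u_n$ and $|\nabla u_n|^q|u_n|$.

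To get the strong $L^\eta$ convergence of $u_n$, I would combine these estimates with Aubin--Lions--Simon: the Marcinkiewicz bound on $\nabla u_n$ gives $\{u_n\}$ bounded in $L^\eta(0,T;W^{1,\eta}_0(\Omega))$ for every $\eta<(N+2)/(N+1)$, while the weak formulation of \eqref{eq:pbn}, the Leray--Lions bound \eqref{A2}, and the $L^1$ control of the right-hand side produce $\{\partial_t u_n\}$ bounded in $L^\eta(0,T;W^{-1,\eta}(\Omega))+L^1(Q_T)$. Then \cite[Corollary 4]{Simon} yields strong convergence in $L^\eta(Q_T)$ for every such $\eta$, and a.e.\ convergence of a subsequence. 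In the critical case $q=2/(N+1)$, the logarithmic improvement encoded in \eqref{stimaenergia-critico} furnishes the extra uniform integrability needed to push $\eta$ up to the endpoint $(N+2)/(N+1)$.

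The decisive preparatory step for the remaining convergences is the equi-integrability in $L^1(Q_T)$ of $\{T_n(H(t,x,u_n,\nabla u_n))\}$, for which it suffices to control the gradient contribution. For a measurable $E\subset Q_T$, I would split
\[
\iint_E|\nabla u_n|^q|u_n|\,dx\,d\tau \le k\iint_E |\nabla u_n|^q\,dx\,d\tau + \iint_{\{|u_n|>k\}}|\nabla u_n|^q|u_n|\,dx\,d\tau,
\]
controlling the first term by the Marcinkiewicz estimate on $\nabla u_n$ together with H\"older in $|E|$, and the second via the $L^1(\log L)^{(N+1)/2}$ bound in the critical case (respectively the strict $L^1$ bound in the subcritical case), choosing first $k$ large and then $|E|$ small as in Propositions \ref{prop:cmpt} and \ref{prop:cmpt2}. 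Once equi-integrability is established, Dunford--Pettis and \cite[Theorem 2.1]{P99} give $\nabla T_k(u_n)\to \nabla T_k(u)$ strongly in $(L^2(Q_T))^N$ for every $k>0$, hence $\nabla u_n \to \nabla u$ in measure and a.e.\ along a subsequence.

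The a.e.\ convergences of $a(t,x,u_n,\nabla u_n)$ and of $T_n(H(t,x,u_n,\nabla u_n))$ then follow from the Carath\'eodory structure of $a$ and $H$, and combining with the equi-integrability, Vitali's theorem upgrades the latter to strong $L^1(Q_T)$ convergence towards $H(t,x,u,\nabla u)$. Finally, the $C([0,T];L^1(\Omega))$ convergence is obtained by reproducing verbatim the argument of \cite[Proposition 6.4]{DGLS}, which uses only the equation, the $L^1$ control of the right-hand side, and the compactness of finite-energy truncations. I expect the main obstacle to be the equi-integrability of $|\nabla u_n|^q|u_n|$ in the critical case $q=2/(N+1)$: the Marcinkiewicz exponent $(N+2)/(q(N+1)+N)$ degenerates exactly to $1$ there, so the logarithmic refinement from \eqref{stimaenergia-critico} has to be exploited to escape a borderline $L^1$-weak situation.
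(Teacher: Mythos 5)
Your architecture coincides with the paper's: Marcinkiewicz estimates from Lemma \ref{lem:mar}, Aubin--Lions--Simon for the strong convergence of $u_n$, the splitting over $\{|u_n|\le k\}$ and $\{|u_n|>k\}$ for the equi-integrability of the right-hand side (with the $L^1(\log L)^{(N+1)/2}$ bound absorbing the upper level set when $q=\frac2{N+1}$), then \cite[Theorem 2.1]{P99}, Vitali, and \cite[Proposition 6.4]{DGLS}. The one genuine gap concerns the endpoint convergence $u_n\to u$ strongly in $L^{\frac{N+2}{N+1}}(Q_T)$ for $q=\frac2{N+1}$, which you assert but do not derive. The Marcinkiewicz bound $|\N u_n|\in M^{\frac{N+2}{N+1}}(Q_T)$ only gives $L^\eta$ bounds for $\eta$ \emph{strictly} below $\frac{N+2}{N+1}$, and your appeal to ``the logarithmic improvement encoded in \eqref{stimaenergia-critico}'' is not the mechanism that reaches the endpoint. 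The paper gets there from the plain weighted energy estimate of Corollary \ref{cor-GN3} via
\[
\iint_{Q_T}|\N u_n|^\eta\,dx\,d\tau\le\bigg(\iint_{Q_T}\frac{|\N u_n|^2}{1+|u_n|}\,dx\,d\tau\bigg)^{\eta/2}\bigg(\iint_{Q_T}(1+|u_n|)^{\frac{\eta}{2-\eta}}\,dx\,d\tau\bigg)^{\frac{2-\eta}{2}},
\]
choosing $\frac{\eta}{2-\eta}=\frac{N+2}{N}$, i.e.\ $\eta=\frac{N+2}{N+1}$, the last factor being finite by the Gagliardo--Nirenberg bound $u_n\in L^{\frac{N+2}{N}}(Q_T)$ of Corollary \ref{cor-GN3}. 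No logarithm enters; it is the same H\"older splitting used for $1<\si<2$, run with $\si=1$. You list Corollary \ref{cor-GN3} among your starting data, so the ingredient is at hand, but the step must actually be carried out, since every subsequent use of the gradient bound (e.g.\ $q<\eta$ in the equi-integrability estimate) relies on it.

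A secondary point, which is an error if read literally: for $0<q<\frac2{N+1}$ you propose to control $\iint_{\{|u_n|>k\}}|\N u_n|^q|u_n|\,dx\,d\tau$ by ``the strict $L^1$ bound''. A uniform $L^1(Q_T)$ bound on $|\N u_n|^q|u_n|$ does not make this tail small uniformly in $n$ as $k\to\infty$. What works --- and what the paper does, and what your closing remark about the Marcinkiewicz exponent suggests you actually intend --- is that Lemma \ref{lem:mar} places $|\N u_n|^q|u_n|$ in $M^{\frac{N+2}{q(N+1)+N}}(Q_T)$ with exponent strictly greater than $1$ in the subcritical range, hence uniformly in $L^\gamma(Q_T)$ for some $\gamma>1$, which yields equi-integrability outright, with no level-set splitting needed. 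With these two repairs your proof matches the paper's.
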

\begin{proof}
The proof of this result is very similar to the ones of Propositions \ref{prop:cmpt}, \ref{prop:cmpt2}, so we briefly detail the convergences whose  proofs are different.\\
{\it The strong convergence of $u_n$ in $L^\eta(Q_T)$.}
Let us consider first the case  $q= \frac{2}{N+1} $. The argument is similar to that developed in Proposition \ref{prop:cmpt2}: we use the H\"older inequality to estimate as
\[\iint_{Q_T}|\N u_n|^\eta dx\, d\tau\le \left(\iint_{Q_T}\frac{|\N u_n|^2}{1+|u_n|}\, dx\, d\tau\right)^{\eta/2} \left(\iint_{Q_T}(1+|u_n|)^{\frac{\eta}{2-\eta}}dx\, d\tau\right)^{(2-\eta)/2}.
\]
Due to Corollary \ref{cor-GN3}, the right hand side is bounded if we choose
\[\frac{N+2}{N}=\frac{\eta}{2-\eta}\q\Leftrightarrow\q \eta=\frac{N+2}{N+1}.\]
We then apply \cite[Corollary 4]{Simon} with  $p=\eta$, $X=W_0^{1,\eta}(\Omega)$, $B=L^\eta(\Omega)$, $Y=W^{-1,s'}(\Omega)$ for some $s>N$.

Next, $0<q< \frac{2}{N+1}$ is assumed. We recall \eqref{eq:A} and \eqref{H} and we  take $T_k(u_n)$ as test function obtaining
\begin{multline*}
\int_{\Omega}\Theta_k(u_n(t))\, dx+\iint_{Q_t}|\N T_k(u_n)|^2\, dx\, d\tau\\
\le \int_\Omega\Theta_k(u^0)\, dx+\iint_{Q_T}|\N u_n|^q|u_n||T_k(u_n)|\, dx\, d\tau+\iint_{Q_T}|f|\, |T_k(u_n)|\, dx\, d\tau\\
\le k\left[\int_\Omega |u^0|\, dx+\iint_{Q_T}|\N u_n|^q|u_n|\, dx\, d\tau+\iint_{Q_T}|f|\, dx\, d\tau\right]
\le kB
\end{multline*}
since $\{|\N u_n|^q|u_n|\}$ is uniformly bounded in $L^1(Q_T)$ by Propositions \ref{prop:ape-om} and \ref{prop:ape-1}. It is straightforward that then
\[\iint_{Q_T}|\N T_k(u_n)|^2\, dx\, d\tau\le kB\,.\]
On the other hand, for every $t\in[0,T]$, we have
\[\frac1k \int_{\Omega}\Theta_k(u_n(t))\, dx\le B\qq\forall k>0,\]
and letting $k$ go to 0, we deduce
$\int_{\Omega}|u_n(t)|\, dx\le B$.
Therefore,
\[\max_{t\in[0,T]}\int_{\Omega}|u_n(t)|\, dx\le B\,.\]
Then, we apply Lemma \ref{lem:mar}, deducing that $\set{\av{\N u_n}}$ is uniformly bounded in $M^{\frac{N+2}{N+1}}(Q_T)$, hence, in $L^{\eta}(Q_T)$ for all $1\le \eta<\frac{N+2}{N+1}$.
The result follows from \cite[Corollary 4]{Simon} with  $p=\eta$, $X=W_0^{1,\eta}(\Omega)$, $B=L^\eta(\Omega)$, $Y=W^{-1,s'}(\Omega)$ for some $s>N$.\\
{\it The equi-integrability of the right hand side.}
It is enough to check the equi-integrability of $\set{\av{\N u_n}^q |u_n|}$.\\
Let $q=\frac{2}{N+1}$ and $E\subset Q_T$ be a measurable set. We now apply Proposition \ref{prop:ape-om} and the boundedness of $|\N u_n|^\eta$ (note that $q<\eta$) to get
\begin{align*}
  &\iint_E|\N u_n|^q|u_n|\, dx\, d\tau\\ &=\iint_{E\cap\{|u_n|\le k\}}|\N u_n|^q|u_n|\, dx\, d\tau+\iint_{E\cap\{|u_n|> k\}}|\N u_n|^q|u_n|\, dx\, d\tau \\
  &\le k\iint_{E}|\N u_n|^q\, dx\, d\tau+\iint_{\{|u_n|> k\}}|\N u_n|^q|u_n|\, dx\, d\tau \\
  &\le k\left(\iint_{Q_t}|\N u_n|^\eta\, dx\, d\tau\right)^{\frac q \eta}|E|^{1-\frac q\eta}+\frac1{\log(1+|k|)^{\frac{N+1}{2}}}\iint_{Q_t}|\N u_n|^q|u_n|\log(1+|u_n|)^{\frac{N+1}{2}}\, dx\, d\tau\\
&  \le kc|E|^{1-\frac q\eta}+\frac c{\log(1+|k|)^{\frac{N+1}{2}}}\,, \quad \forall k>0\,.
\end{align*}
We may argue as in Proposition \ref{prop:cmpt} to obtain the desired equi-integrability.

In the case $0<q<\frac{2}{N+1}$, the sequence $\set{\av{\N u_n}^q |u_n|}$ is uniformly bounded in $L^\gamma(Q_T)$, for some $1<\gamma<\frac{N+2}{q(N+1)+N}$, owing to Lemma \ref{lem:mar}. Thus, the right hand side is uniformly bounded in $L^\gamma(Q_T)$. We then deduce the equi-integrability of $\set{\av{\N u_n}^q |u_n|}$.\\

From now on, we can follow the same steps as in the previous Sections  to finish the proof.
\end{proof}

\section{Proof of Proposition \ref{prop:energy} - Existence of finite energy solutions with small $q$}\label{sec:energy}

We  sketch the proof of the a priori estimate, since the convergence can be done as in Subsection \ref{ssec:cmptL2}.

\begin{proof}[Proof of Proposition \ref{prop:energy}]

We use $u_n$ as test function in \eqref{eq:pbn} so that, recalling \eqref{eq:A} and \eqref{H},  we obtain
\begin{align*}
&\frac12 \intO u_n^2(t)\,dx+\iint_{Q_t} \av{\N u_n }^2\,dx\,d\tau \le\iint_{Q_t} \av{\N u_n}^q u_n^2\,dx\,d\tau+\iint_{Q_t} |f| |u_n|\,dx\,d\tau +\frac12 \intO (u^0)^2\,dx.
\end{align*}
To estimate the integral with  $|\nabla u_n|^q$, we apply H\"older's inequality with exponents $\Big(\frac{2}{q},\frac{2}{2-q}\Big)$, then we interpolate between $L^2(\Omega)$ and $L^{2^*}(\Omega)$,  and finally we use Sobolev's and Young's inequalities:
\begin{align*}
\intO \av{\N u_n}^q u_n^2\,dx&\le \bigg(\intO \av{\N u_n}^2 \,dx\bigg)^\frac{q}{2}
\bigg(\intO |u_n|^{\frac{4}{2-q}}\,dx\bigg)^\frac{2-q}{2}\le c\bigg(\intO \av{\N u_n}^2\,dx\bigg)^{\frac{q}{4}(N+2)}
\bigg(\intO u_n^2\,dx\bigg)^\frac{4-qN}{4}\\
&\le c\eps^{-\frac{4}{q(N+2)}}\bigg(\intO \av{\N u_n}^2\,dx\bigg)\bigg(\intO u_n^2\,dx\bigg)^\frac{2}{N+2}+
c\eps^{\frac{4}{4-(N+2)q}}\intO u_n^2\,dx,
\end{align*}
for some $\eps>0$ to be chosen.
Integrating from $0$ to $t$, it yields
\begin{align*}
\iint_{Q_t} \av{\N u_n}^q u_n^2\,dx \,d\tau
&\le c_1(\eps)  \sup_{\tau\in[0,t]}\bigg(\int_\Omega u_n^2(\tau)\,dx\bigg)^\frac{2}{N+2}\iint_{Q_t}  \av{\N u_n}^2\,dx \,d\tau\nonumber
\\
&\q +
c_2T\eps^{\frac{4}{4-(N+2)q}} \sup_{\tau\in[0,t]}\int_\Omega u_n^2(\tau)\,dx.%
\end{align*}
The estimate of the integral with the forcing term is the same as in Proposition \ref{prop:ape}.
We  get
\begin{align*}
\iint_{Q_t} |f| |u_n|\,dx\,d\tau&\le   \int_0^T \normaF{f(\tau)}{L^m(\Omega)}^r\,d\tau
    + c_3 \sup_{\tau\in[0,t]}\bigg(\int_\Omega u_n^2(\tau)\,dx\bigg)^{\frac{r'-2}{2}} \iint_{Q_t}|\N u_n|^2 \,dx\,d\tau.
\end{align*}
We gather the previous estimates:
\begin{multline*}
\frac12 \intO u_n^2(t)\,dx+
\bra{1-c_1(\eps) \sup_{\tau\in[0,t]}\bigg(\int_\Omega u_n^2(\tau)\,dx\bigg)^\frac{2}{N+2}-
c_3 \sup_{\tau\in[0,t]}\bigg(\int_\Omega u_n^2(\tau)\,dx\bigg)^{\frac{r'-2}{2}}}
\iint_{Q_t} \av{\N u_n }^2\,dx\,d\tau\\
\le
c_2T\eps^{\frac{4}{4-(N+2)q}} \sup_{\tau\in[0,t]}\int_\Omega u_n^2(\tau)\,dx
+ \int_0^T \normaF{f(\tau)}{L^m(\Omega)}^r\,d\tau+\frac12 \intO (u^0)^2\,dx.
\end{multline*}
We fix $\eps,\,\de>0$ such that
\[
c_2T\eps^{\frac{4}{4-(N+2)q}}\le \frac{1}{8},\q
 c_1(\eps) \de^\frac{2}{N+2}+
c_3 \delta^{\frac{r'-2}{2}}\le \frac{1}{2}.
\]
We set $\norm{u^0}_{L^2(\Omega)}$ and $\norm{f}_{L^r(0,T;L^m(\Omega))}$ such that
\[
 \int_0^T \normaF{f(\tau)}{L^m(\Omega)}^r\,d\tau+\frac12 \intO (u^0)^2\,dx\le \frac{\de}{8}.
\]
Hence,  the proof of the a priori estimate can be concluded as in Proposition \ref{prop:ape}.
\end{proof}

\section*{Acknowledgments}
M. Magliocca has been  supported  by Grant RYC2021-033698-I, funded by the MCIU/AEI/10.13039/ 501100011033; by the project  “Análisis Matemático Aplicado y Ecuaciones Diferenciales” Grant PID2022-141187NB-I00 funded by MICIU /AEI /10.13039/501100011033/FEDER, UE and acronym “AMAED”. This publication is part of the project PID2022-141187NB-I00 funded by MICIU/AEI /10.13039/\break501100011033 and also FEDER, UE.
M. Magliocca has also been  funded  by the project
 PID2022-140494NA-I00 funded by MCIN/AEI/10.13039/501100011033/ FEDER, UE.

S. Segura de Le\'on has partially been supported by Grant
PID2022-136589NB-I00 funded by MCIN/ AEI/10.13039/501100011033 and by Grant RED2022-134784-T also funded by MCIN/AEI/10.13039/ 501100011033.

\end{document}